\documentclass[10pt]{article}
\usepackage[dvips]{graphicx}
\usepackage{psfrag}
\usepackage{amssymb,amsmath,amsfonts,euscript,amsthm}
\usepackage{color}
\usepackage{mathabx}
\usepackage{euscript}
\usepackage{mathrsfs}
\usepackage{appendix}
\usepackage[normalem]{ulem}
\DeclareMathAlphabet{\mathpzc}{OT1}{pzc}{m}{it}

\setlength{\textheight}{9in} \setlength{\textwidth}{6.2in}
\setlength{\oddsidemargin}{0.2in} \setlength{\evensidemargin}{0.2in}
\setlength{\parindent}{0.2in}
\setlength{\topmargin}{-0.5in} \setcounter{section}{0}
\setcounter{figure}{0} \setcounter{equation}{0}

\newtheorem{thm}{Theorem}[section]

\newtheorem{defn}[thm]{Definition}

\newtheorem{lemma}[thm]{Lemma}

\newcommand{\NN}{{\mathbb N}}

\renewcommand{\a}{\alpha}   \renewcommand{\b}{\beta}

   \newcommand{\e}{\epsilon}
\newcommand{\ve}{\varepsilon}
\newcommand{\g}{\gamma}      
\renewcommand{\k}{\kappa}   \renewcommand{\l}{\lambda}
\renewcommand{\L}{\Lambda}
\newcommand{\m}{\mu}        
\newcommand{\om}{\omega}
\newcommand{\p}{\phi}
    \newcommand{\vr}{\varrho}
\renewcommand{\r}{\rho}      \newcommand{\s}{\sigma}
   \renewcommand{\t}{\tau}
\renewcommand{\th}{\theta}      
     
\newcommand{\z}{\zeta}   \newcommand{\G}{\Gamma}
\newcommand{\D}{\Delta} \newcommand{\Sg}{\Sigma}

\newcommand{\diag}{\operatorname{diag}}

\newcommand{\osc}{\operatorname{osc.}}
\newcommand{\tu}{\operatorname{\bf u}}

\newcommand{\br}{\operatorname{\bf r}}

\newcommand{\TV}{\text{T.V.}}
\newcommand{\newvec}[1]{\vbox{\ialign{##\crcr$\displaystyle\rightharpoonup$\crcr\noalign{\kern-1pt\nointerlineskip}
$\hfil\displaystyle{#1}\hfil$\crcr}}} 
\newcommand{\longrightharpoonup}{\relbar\joinrel\rightharpoonup}
\newcommand{\longvec}[1]{\vbox{\ialign{##\crcr$\displaystyle\longrightharpoonup$\crcr\noalign{\kern-1pt\nointerlineskip}
$\hfil\displaystyle{#1}\hfil$\crcr}}} 

 \title{Global Transonic Solutions of Planetary Atmospheres in Hydrodynamic Region $\mbox{--}$ Hydrodynamic Escape Problem due to Gravity and Heat}
  \author{
  Bo-Chih Huang\footnote{Department of Mathematics, National Central University, Chung-Li 32001, Taiwan,
      E-mail: huangbz@math.ncu.edu.tw.},\ \
  Shih-Wei Chou\footnote{Department of Mathematics, National Central University, Chung-Li 32001, Taiwan,
      E-mail: swchou@math.ncu.edu.tw},\ \
  John M. Hong\footnote{Department of Mathematics, National Central University, Chung-Li 32001, Taiwan,
      E-mail: jhong@math.ncu.edu.tw.},\ \ and\
  Chien-Chang Yen\footnote{Department of Mathematics, Fu Jen Catholic University, New Taipei City 24205, Taiwan,
      E-mail: yen@math.fju.edu.tw.}
       \footnote{This work was partially supported by the Ministry of Science and Technology, R.O.C. under the grants MOST 104-2115-M-008-015-MY3, MOST 104-2115-M-008-005- and MOST 104-2115-M-030-004-.}}

\begin{document}
\maketitle


\date{}


\medskip

\begin{abstract}
The hydrodynamic escape problem (HEP), which is characterized by a free boundary value problem of Euler equation with gravity and heat, is crucial for investigating the evolution of planetary atmospheres. In this paper, the global existence of transonic solutions to the HEP is established using the generalized Glimm method. The new version of Riemann and boundary-Riemann solvers, are provided as building blocks of the generalized Glimm method by inventing the contraction matrices for the homogeneous Riemann (or boundary-Riemann) solutions. The extended Glimm-Goodman wave interaction estimates are investigated for obtaining a stable scheme and positive gas velocity, which matches the physical observation. The limit of approximation solutions serves as an entropy solution of bounded variations. Moreover, the range of the hydrodynamical region is also obtained.
\\
\\
MSC: 35L50, 35L60, 35L65, 35L67, 76N10, 85A20, 85A30\\
\\
Keywords: hydrodynamic escape problem; nonlinear hyperbolic systems of balance laws; generalized Riemann and boundary-Riemann problems; generalized Glimm scheme; hydrodynamic region.
\end{abstract}

\section{Introduction}
\setcounter{section}{1}

Spacecraft exploration of the planets in our solar system and the discovery of exoplanets has attracted considerable attention in the atmospheric escape from planetary objects~\cite{Erwin2013}. The Cassini spacecraft currently improves our understanding of the atmospheric escape from Titan~\cite{Guo2005}. The Maven Mission circuits around Mars for studying its atmospheric composition~\cite{Lin2012}. In July 2015, the New Horizons (NH) spacecraft completed its flyby of Pluto and discovered flowing ice and an extended haze on the planet. Pluto already exhibits a planetary geology that comprises flowing ices, exotic surface chemistry, mountain ranges, and vast haze. Analyzing Pluto's atmosphere reveals that Pluto's surface has a reddish hue, a simple hydrocarbon in its atmosphere, and the temperature for hazes to form at altitudes higher than 30 kilometers above Pluto's surface.

The hydrodynamic escape problem (HEP) is crucial for investigating of the evolution of planetary atmospheres. The HEP for a single-constituent atmosphere is governed by the following Euler equations with gravity and heat:
\begin{equation}
\label{3dHEP}
\left\{
\begin{split}
&\partial_t\r+\nabla\cdot(\r\tu)=0, \\
&\partial_t(\r\tu)+\nabla\cdot((\r\tu)\otimes\tu)+\nabla P=-\frac{GM_p\r}{|\br|^3}\br, \\
&\partial_tE+\nabla\cdot((E+P)\tu)=-\frac{GM_p\r}{|\br|^3}(\tu\cdot\br)+q,
\end{split}\right.
\end{equation}
where $\br$ is the position vector from the center of the planet to the particle of the gas; $\r,\ \tu,\ P$, and $E$ represent the density, velocity, pressure, and total energy of the gas respectively; and $G,\ M_p,\ q=q(\br)$ are the gravitational constant, mass of the planet, and heating, respectively.

In this paper, we are concerned with the three-dimensional inviscid hydrodynamic equations without thermal conduction in spherical symmetric space-time models, that is, we considered \eqref{3dHEP} to be of the following form
\begin{equation}
\label{HEP1}
\left\{
\begin{split}
&\partial_t\big(\rho x^2\big)+\partial_x\big(\rho u x^2\big)=0, \\
&\partial_t\big(\rho u x^2\big)+\partial_x\big(\rho u^2 x^2+P x^2\big)=-GM_p\rho+2Px, \quad 0<x_B<x<x_T, \\
& \partial_t\big(E x^2\big)+\partial_x\big((E+P)u x^2\big)=-GM_p\rho u+qx^2.
\end{split}\right.
\end{equation}
Here, $x$ denotes the distance from the center of the planet, $x_B$ and $x_T$ are the altitudes of the inner and outer boundaries of the planetary atmosphere, respectively. Typically, $x_B$ and $x_T$ are the altitudes of the upper thermosphere and exobase. The total energy $E$ is the sum of the kinetic energy and the internal energy of the gas flow,
\begin{equation}
\label{totalE}
E=\frac{1}{2}\rho u^2+\rho e=\frac{1}{2}\rho u^2+\frac{P}{\gamma-1},
\end{equation}
where $\gamma$ is the adiabatic constant with $1<\g<5/3$.

The steady transonic solutions of \eqref{HEP1} are crucial because of an almost hydrodynamic equilibrium state near the bottom boundary. The hybrid fluid/kinetic model \cite{TEDVJ2012} seems to be realistic approach, which comprises the hydrodynamic escape result \cite{ST2008B} and a drifting Maxwell-Boltzmann distribution function that includes the bulk velocity $u$ in the velocity distribution \cite{V2011, Y2004}. Tian and Toon \cite{TI2005B} implemented a numerical simulation using a time-dependent hyperbolic system. A time-independent model experiences singularity at the sonic points \cite{ST2008B}. For the relaxation methods in \cite{MurrayClay2009} for free conduction, the achieved numerical solutions depend on close guess of initial data with positive velocity. The first theoretical analysis for a steady HEP was reported in \cite{HYH}. Using the geometric singular perturbation method, the authors constructed smooth transonic stationary solutions issuing from subsonic states to supersonic states and various types of discontinuous stationary solutions for \eqref{HEP1}. For the time-evolutionary case, the global existence results are yet to be established. In this paper, the global existence of time-evolutionary transonic solutions to the HEP in the hydrodynamic region $\Sg\equiv\{(x,t):x_B\le x\le x_T,\ t\in[0,\infty)\}$ is established. The gravity and heat affecting intensity can be distinguished during the wave interaction, leading us to the effective development of the numerical simulation.

We define the notations as follows:
\begin{equation}
\label{setting1}
m:=\rho u,\quad U:=(\rho,\; m,\; E)^T.
\end{equation}
Using \eqref{totalE}, we can rewrite \eqref{HEP1} in a compact form
\begin{equation}
\label{3x3system}
U_t+f(U)_x=h(x)g(x,U),
\end{equation}
where $h(x)=-2/x$ and
\begin{equation}
\label{setting}
\begin{split}
f(U)&=\Big(m,\ \frac{3-\g}{2}\frac{m^2}{\r}+(\g-1)E,\ \frac{m}{\r}\Big(\g E-\frac{\g-1}{2}\frac{m^2}{\r}\Big)\Big)^T, \\
g(x,U)&=\Big(m,\ \frac{m^2}{\r}+\frac{GM_p}{2x}\r,\ \frac{m}{\r}\Big(\g E-\frac{\g-1}{2}\frac{m^2}{\r}\Big)+\frac{GM_p}{2x}m-\frac{xq}{2}\Big)^T.
\end{split}
\end{equation}
The complete model of the HEP is given by the following free boundary value problem:
\begin{eqnarray}
\label{FBVP}
\text{(HEP)}&&\left\{
\setlength\arraycolsep{0.1em}
\begin{split}
&U_t+f(U)_x=h(x)g(x,U),\quad (x,t)\in\Sg\equiv[x_B,x_T]\times[0,\infty), \\
&U(x,0)=U_0(x)\in\Omega,\ x\in[x_B,x_T], \\
&\r(x_B,t)=\r_B(t),\ m(x_B,t)=m_B(t),\quad t>0, \\
&\r\Big|_{\Sg},\ \frac{m}{\r}\Big|_{\Sg}>0,\ \mathfrak{Kn}(U)\Big|_{\Sg}\le 1,
\end{split}\right.
\end{eqnarray}
where the exobase of the atmosphere $x=x_T$ (as well as $\Sg$) must be determined and $\mathfrak{Kn}(U)$ denotes the Knudsen number of $U$. Physically, the region $\Sg$ is called the hydrodynamic region of \eqref{FBVP}. The position of the inner boundary $x_B$ may be probed through astronomical observation. However, determining the outer boundary $x_T$ is usually difficult due to the transition of the kinetic and hydrodynamical regions. Determining the position of the outer boundary $x=x_T$ and solving \eqref{3x3system} in $\Sg$ simultaneously is basically a free boundary problem, which makes it difficult to establish the global existence result. To overcome this difficulty, we first propose the following associated initial-boundary value problem (IBVP) without vacuum in $\Pi\equiv[x_B,\infty)\times[0,\infty)$:
\begin{eqnarray}
\label{IBVP}
\text{(IBVP)}&&\left\{
\setlength\arraycolsep{0.1em}
\begin{split}
&U_t+f(U)_x=h(x)g(x,U),\quad (x,t)\in\Pi\equiv[x_B,\infty)\times[0,\infty), \\
&U(x,0)=U_0(x)\in\Omega, \\
&\r(x_B,t)=\r_B(t),\ m(x_B,t)=m_B(t),\quad t>0,
\end{split}\right.
\end{eqnarray}
where $U_0(x)=(\rho_0(x),m_0(x),E_0(x))^T$ and $\Omega$ is an open domain centered at some sonic state
\begin{equation*}
U_s\equiv(\rho_s,m_s,E_s)\in \CMcal{T}:=\Big\{(\rho ,m,E)\Big|\; m=\r\sqrt{\g(\g-1)\Big(\frac{E}{\r}-\frac{u^2}{2}\Big)}\Big\}.
\end{equation*}
We call the set $\CMcal{T}$ the transition surface or the sonic states. The vacuum case is excluded from this formula because the atmospheric gas does not act as fluid when the density tends to zero. Whether the Glimm method can be applied to the vacuum case for the general system has remained unsolved for decades. In this paper, a new version of the Glimm method is used for establishing the existence of global entropy solutions of \eqref{IBVP} under the following conditions:
\begin{enumerate}
\item [($A_1$)] $\rho_0(x)$, $m_0(x),$ $E_0(x)$, $\r_B(t)$ and $m_B(t)$ are bounded positive functions with small total variations, and there exists $\vr>0$ sufficiently small such that $\r_0(x)\ge\vr$ and $\r_B(t)\ge\vr$ for $(x,t)\in\Pi$;
\item [($A_2$)] $\min\limits_{t\ge 0}\{m_B(t)\}>(1+\e)\TV\{U_0(x)\}+(1+\e+\e^2)^2\CMcal{C}$ for $0<\e<\frac{1}{2}$ and some positive constant $\mathcal{C}$;
\item [($A_3$)] $q(x)\in W^{1,1}[x_B,\infty)$.
\end{enumerate}
Under the condition ${\rm(}A_1{\rm)}\sim{\rm(}A_3{\rm)}$, \eqref{IBVP} consists of global entropy solutions with positive velocity in $\Pi$ (Main Theorem I). In addition, under a certain constraint of transonic initial data, in the complement of $\Sg$, denoted as $\Sg^c$, the wave speeds of Glimm's approximate solutions $\{U_{\D x}\}$ to \eqref{IBVP} are positive, that is, the entropy flow $U$ to which $\{U_{\D x}\}$ converges is supersonic in $\Sg^c$ so that the waves of $U$ in $\Sg^c$ do not move into $\Sg$ to interact with the waves in $\Sg$. Moreover, we prove that the Knudsen number $\mathfrak{Kn}(U)$ of $U$ in $\Sg$ satisfies
$$
\mathfrak{Kn}(U)\le 1,
$$
which implies that $\Sg$ fulfills the physical meaning of the hydrodynamic region \cite{ZSE}. Using this strategy, we can prove that the solution $U\big|_{\Sg}$ for \eqref{IBVP} is indeed the entropy solution of \eqref{FBVP} (Main Theorem II).

Let us review some previous results related to this topic and clarify the motivation of the study. When $g\equiv 0$, the system \eqref{3x3system} is reduced to the strictly hyperbolic system,
\begin{equation}
\label{homo}
U_t+f(U)_x=0.
\end{equation}
The entropy solution to the Riemann problem was first constructed by Lax \cite{LA}. In particular, the solution is self-similar and consists of constant states separated by elementary waves: rarefaction waves, shocks, and contact discontinuities. Furthermore, the global existence of weak solutions to the Cauchy problem was established by Glimm \cite{G}, who considered Lax's solutions as the build blocks of the scheme. For the inhomogeneous hyperbolic systems,
\begin{eqnarray}
\label{balance}
 U_t +f(x,U)_x = g(x,U).
\end{eqnarray}
The Cauchy problem was first studied by Liu \cite{TP1}. For the Cauchy problem of the general quasi-linear, strictly hyperbolic system
\begin{eqnarray}
\label{qlinear}
U_t +f(x,t,U)_x = g(x,t,U).
\end{eqnarray}
The existence of entropy solutions was first established by Dafermos and Hsiao \cite{DH}. In \cite{CHS2,HL,LR}, system \eqref{qlinear} was studied under dissipative conditions by using the asymptotic expansion of the classical Riemann solutions. The aforementioned conditions contribute considerably in investigating the systems \eqref{balance} and \eqref{qlinear}. In the aforementioned studies, the source term was used for generating an extra stationary characteristic field in the Riemann problem. The time-independent wave curves generated by this filed are tangential to the classical 1-wave curves at sonic states, resulting in the nonuniqueness of solutions of a Riemann problem. The total variations of solutions may blow up in a finite time and the blow up phenomenon can be eliminated using a further dissipative assumption.

When $f$ and $g$ are independent of $x$, Bianchini-Bressan \cite{BB} studied the existence result by using regularization method. Luskin-Temple \cite{LT} and the authors in \cite{CHS} establish the existence result by combining Glimm's scheme with the method of fractional steps. Base on these studies, we can consider the effect of the source in \eqref{3x3system} as the perturbations of the solutions to the homogeneous conservation laws. The appearance of the source terms in \eqref{3x3system} breaks up the self-similarity of Riemann waves. But the effect is only up to $O(1)\D t$, that is, the effect of the source terms on self-similar waves is of the order $\D t$ in each Riemann cell. More precisely,
\begin{equation}
\label{1.12t}
(df(U)-\xi I_3)\dot{U}=(t-t_0)h(x)g(x,U)\approx(\D t)h(x)g(x,U).
\end{equation}
Therefore, we can construct the approximate solution for our generalized Riemann problem as $U=\widetilde{U}+\widebar{U}$ in each Riemann cell, where $\widetilde{U}$ solves \eqref{homo} and $\widebar{U}$ solves the linearized system of \eqref{3x3system} around $\widetilde{U}$. We have
$$
|\widebar{U}|=O(1)|\widetilde{U}|,
$$
and
$$
U(x,t)=S(x,t,\widetilde{U})\cdot\widetilde{U},
$$
for some contraction matrix $S(x,t,\widetilde{U})$ depending on $\widetilde{U}$ (as well as $f,\ g$). This construction of the generalized Riemann solver is in contrast to the fractional step scheme \cite{LT} and other operator splitting methods \cite{BB,CHS,GST}. For instance, in \cite{CHS}, the effect of the source on the solutions of the classical Riemann problem is decoupled. In our case, the effect of the source on the solutions of the classical Riemann problem is strongly coupled. The estimates of wave interaction are more complicated than in \cite{CHS}.

For the stability of the generalized Glimm scheme, contrary to the methods used in \cite{CHS,CHS1,CHS2,DH,H,HL,HT2,LT}, in which the positivity of the gas velocity is assumed, we can demonstrate, through the structure of the generalized Glimm's approximate solution, that the escape velocity of the gas is globally positive, which matches the astronomical observation. Therefore, the uniform bounds of the total variations of the approximate solutions in \eqref{IBVP} can be achieved by showing that
(1) the Glimm functionals of $\widetilde{U}$ are nonincreasing in time and
(2) the perturbations have a uniform bound of the total variations in each time step.
In addition, we prove positivity of the gas velocity through a rigorous mathematical proof. Based on the contraction matrix $S$, we can achieve a more accurate formula of wave interaction estimates that lead to the decay result of the Glimm functionals, and a new relation between the velocity and the Glimm functionals in each time step. Consequently, the stability of the generalized Glimm scheme and the global positivity of the gas velocity is obtained, as shown in Section 3. Based on ${\rm(}A_1{\rm)}\sim{\rm(}A_3{\rm)}$ and the estimation of interaction, the existence of global entropy solutions in transonic gas flow without any dissipative condition is established.

We now introduce the definitions of weak solutions and entropy solutions for \eqref{IBVP}, and state the main theorems.
\begin{defn}
Consider the initial-boundary value problem in \eqref{IBVP}. A measurable function $U(x,t)$ is a weak solution of \eqref{IBVP} if
\begin{equation*}
\iint_{x>x_B, t>0}\left\{U\phi_t+f(U)\phi_x+h(x)g(x,U)\phi\right\}dxdt +\int^{\infty}_{x_B} U_0(x)\phi(x,0)
+\int^{\infty}_{0}f(U(x_B,t))\phi(x_B,t)dt=0,
\end{equation*}
for any test function ${\phi} \in C^1_0(\Pi)$.
\end{defn}
\begin{defn}
Let $\Omega$ be a convex subset of $\mathbb{R}^3$. A pair $(\eta(U),\om(U))$ is an entropy pair of \eqref{HEP1} if $\eta$ is convex on $\Omega$ and
\begin{eqnarray*}
d\om=d\eta df\quad\text{on}\quad\Omega.
\end{eqnarray*}
Furthermore, a measurable function $U$ is an entropy solution of \eqref{IBVP} if $U$ is a weak solution of \eqref{IBVP} and satisfies
\begin{equation}
\label{ibvpentropy}
\iint_{x>x_B,t>0}\left\{\eta\phi_t+\om\phi_x +d\eta\cdot hg\phi\right\}dxdt+\int^\infty_{x_B}\eta(U_0(x))\phi(x,0)dx
+\int^{\infty}_{0}\om(U(x_B,t))\phi(x_B,t)dt\geq 0,
\end{equation}
for every entropy pair $(\eta(U),\om(U))$ and any positive test function $\phi\in C^1_0(\Pi)$.
\end{defn}

\noindent {\bf Main Theorem I.}
{\it Consider the initial-boundary value problem \eqref{IBVP} with transonic initial data $U_0=(\r_0,m_0,E_0)^T$. Assume that the inner boundary data $(\r_B(t),m_B(t))$ satisfies the condition ${\rm(}A_1{\rm)}\sim{\rm(}A_3{\rm)}$ and the heat $q$ satisfies the condition {\rm(}$A_3${\rm)}. Let $\{U_{\theta,\Delta x}\}$ be the sequence of approximate solutions of \eqref{IBVP} by using the generalized Glimm scheme. Then, there exist a null set $N \subset \Phi$ and a subsequence $\{\Delta x_{i}\}\rightarrow 0$ such that if $ \theta \in \Phi\setminus N$, then
$$
U(x,t):=\lim_{\scriptstyle \Delta x_i\rightarrow 0} U_{\theta,\Delta x_i}(x,t)
$$
is the positive entropy solution of \eqref{IBVP}. In particular, the gas velocity is positive in $\Pi$}.
\medskip \\
\noindent {\bf Main Theorem II.}
{\it Assume that the transonic initial data $U_0=(\r_0,m_0,E_0)^T$ such that $\r_0,E_0$ is decreasing and $m_0$ is increasing and $u_0(x_B)<c_0(x_B)$, where $u_0,c_0$ as defined in \eqref{2.5.5}.  There exists $x_T>x_B$ depending on the initial and boundary data such that $\Sg\equiv [x_B,x_T]\times[0,\infty)$ is the hydrodynamic region of \eqref{FBVP}, which means $U(x,t)|_{\Sg}$ is the global entropy solution of \eqref{FBVP} satisfying $\mathfrak{Kn}(U)\big|_{\Sg}\le 1$.}\\

This paper is organized as follows. Section 2 presents the generalized solvers for Riemann and boundary-Riemann problems based on the construction of the approximate solutions to these problems through the asymptotic expansion and operator splitting techniques. The residuals of the solutions in each grid are calculated to preserve the consistency of the proposed scheme. Section 3 presents a generalized version of the Glimm scheme. Moreover, the generalized wave interaction estimate, nonincreasing Glimm functional, and estimate for the total variation of the perturbations in each time strip are obtained. The global existence of the entropy solutions for \eqref{HEP1} is proved. The hydrodynamic region is determined in the final section.

\section{Generalized solutions for the Riemann and Boundary-Riemann problems}
\setcounter{equation}{0}

In this section, we introduce a new method of constructing the approximate solutions to the Riemann and boundary-Riemann problems of \eqref{3x3system}, which are the building blocks of the generalized Glimm scheme of the HEP. The residuals of the approximate solutions will be estimated for maintaining the consistency of the generalized Glimm scheme. Let us select the spatial resolution $\Delta x>0$ and the temporal step $\Delta t>0$ sufficiently small, which satisfies the Courant-Friedrichs-Lewy (CFL) condition
\begin{equation}
\label{CFL}
\l_*:=\frac{\Delta x}{\Delta t} > \sup\limits_{(\rho,m,E)\in\Omega}\left\{\frac{m}{\rho}+\sqrt{\g(\g-1)\Big(\frac{E}{\r}-\frac{m^2}{2\r^2}\Big)}\right\}.
\end{equation}
We define the inner region at the location $x_0$ and time $t_0$
\begin{equation}
\label{ingrid}
D(x_0,t_0):=\{(x,t)\mid|x-x_0|<\Delta x, \;t_0<t< t_0+\Delta t\},
\end{equation}
and the boundary region at the lower boundary $x_B$ and time $t_0$
\begin{equation}
\label{bdgrid}
D(x_B,t_0):=\{(x,t)\mid x_B<x<x_B+\Delta x, \;  t_0<t< t_0+\Delta t\}.
\end{equation}
The Riemann problem of \eqref{HEP1} in $D(x_0,t_0)$, denoted by $\CMcal{R}_G(x_0,t_0; g)$, is given by
\begin{equation}
\label{RP}
\CMcal{R}_G(x_0,t_0;g) :\qquad
\left\{\begin{array}{l}
U_t+f(U)_x=h(x)g(x,U),\quad (x,t)\in D(x_0,t_0), \\
U(x,t_0)=\left\{\begin{array}{ll}
U_L, &\text{if }x_0-\D x<x<x_0, \\
U_R, &\text{if }x_0<x<x_0+\D x,
\end{array}\right.
\end{array}\right.
\end{equation}
and the boundary-Riemann problem of \eqref{HEP1} in $D(x_B,t_0)$, denoted by
$\CMcal{BR}_G(x_B,t_0; g)$ can be expressed as
\begin{equation}
\label{BRP}
\CMcal{BR}_G(x_B,t_0; g):\qquad
\left\{\begin{array}{ll}
U_t+f(U)_x=h(x)g(x,U),         &(x,t)\in D_B(x_B,t_0), \\
U(x,t_0)=U_R,                  &x_B\le x\le x_B+\D x, \\
\r(x_B,t)=\r_B,\ m(x_B,t)=m_B, &t_0<t<t_0+\D t,
\end{array}\right.
\end{equation}
where $m,\ U,\ f$, and $g$ are defined in \eqref{setting1} and \eqref{setting}; $U_L=(\rho_L, m_L,E_L)$ and $U_R=(\rho_R, m_R,E_R)$ are the left and right constant states; and $\r_B>0$ and $m_B>0$ are the density and momentum at the boundary $x_B$, respectively. By setting the source term $g \equiv 0$ in \eqref{RP} and \eqref{BRP}, the corresponding classical Riemann and boundary-Riemann problems are denoted by $\CMcal{R}_C(x_0,t_0)=\CMcal{R}_G(x_0,t_0;0)$ and $\CMcal{BR}_C(x_B,t_0)=\CMcal{BR}_G(x_B,t_0;0)$.

\subsection{Construction of approximate solutions to the Riemann and boundary-Riemann problems}

The system \eqref{3x3system} is a strictly hyperbolic system whose Jacobian matrix $df$ has three distinct real eigenvalues:
$$
\lambda_1(U):= u-c(U),\quad \l_2(U):=u,\quad\text{and}\quad\lambda_3(U):=u+c(U),
$$
where
\begin{equation}
\label{2.5.5}
u=\frac{m}{\r}\quad\text{and}\quad
c(U)=\sqrt{\g(\g-1)\Big(\frac{E}{\r}-\frac{u^2}{2}\Big)}.
\end{equation}
The corresponding right eigenvectors of $df$ are
$$
R_1(U)=(-1,c-u,uc-H)^T,\quad R_2(U)=(1,u,\tfrac{1}{2}u^2)^T,\quad\text{and}\quad R_3(U)=(1,c+u,uc+H)^T
$$
where the total specific enthalpy $H$ is
\begin{equation}
\label{enthalpy}
H=H(U)=\frac{\g E}{\r}-\frac{\g-1}{2}u^2,
\end{equation}
and $c(U)$ is the {\it sound speed} of the gas. Here, the gas is assumed to be ideal so that the pressure satisfies
\begin{equation}
\label{ideal}
P=\r RT,
\end{equation}
where $R$ is the molar gas constant and $T$ is the absolute temperature. According to \eqref{totalE} and \eqref{ideal}, the sound speed can be expressed as
\begin{equation*}
c=\sqrt{\g RT}.
\end{equation*}
Since the laws of thermodynamics indicate that the absolute zero temperature cannot be reached by only the thermodynamic process, it allows us to assume that
\begin{equation}
\label{minc}
\min_{U\in\Omega}c(U)=c_*>0.
\end{equation}
Furthermore, we have
\begin{equation*}
\nabla\lambda_i(U)\cdot R_i(U)=\frac{(\g+1)c}{2\r}>0,\ i=1,3,\quad\text{and}\quad\nabla\l_2(U)\cdot R_2(U)=0,
\end{equation*}
which implies that the first and third characteristic fields are genuinely nonlinear and the second characteristic field is linearly degenerate. Therefore, the entropy solutions for $\CMcal{R}_C(x_0,t_0)$ and $\CMcal{BR}_C(x_B,t_0)$ consist of either shock waves, rarefaction waves first or third characteristic fields, or contact discontinuities from the second characteristic field. For each $i=1,3$, the $i$-rarefaction wave is a self-similar function
$$
U=U(\xi), \quad \xi=\frac{x-x_0}{t-t_0},
$$
which satisfies
\begin{equation*}
(df(U)-\xi I_3)\cdot \frac{dU}{d\zeta}=0,
\end{equation*}
where $I_3$ is the $3\times3$ identity matrix and the admissible $i$-shock is a discontinuous function satisfying the Rankine-Hugoniot condition
\begin{equation}
\label{jumpcond}
 s [U]=[f(U)],
\end{equation}
and Lax's entropy condition
\begin{equation*}
\lambda_i(U_R) <s< \lambda_i(U_L),
\end{equation*}
where $s$ is the speed of the shock-front and $[\cdot]$ denotes the difference of states across the shock. According to Lax's method \cite{LA}, we can obtain the existence and uniqueness of the entropy solution for $\CMcal{R}_C(x_0,t_0)$. The solution consists of at most four constant states separated by shocks, rarefaction waves, or contact discontinuity. However, for $\CMcal{BR}_C(x_B,t_0)$, even under the Rankine-Hugoniot and Lax's entropy conditions, we may not obtain the uniqueness of the weak solutions when $U_R$ is near the transition surface $\CMcal{T}$, see Figure 1.
\begin{center}
\includegraphics[scale=0.8]{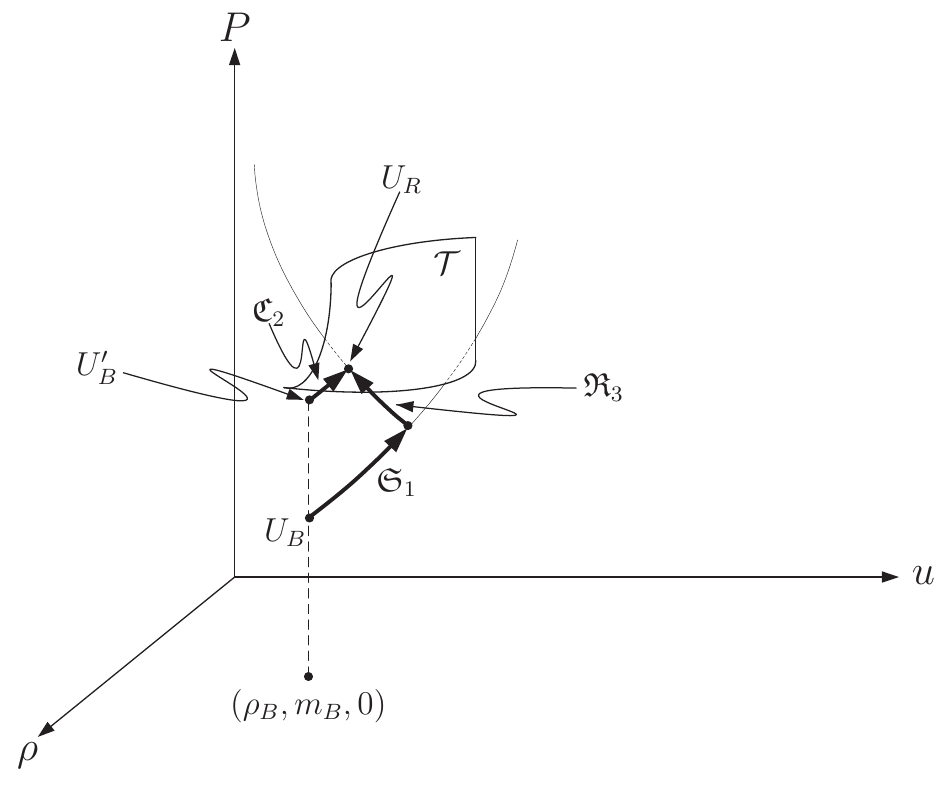}\\
Figure 1. Two states $U_B$ and  $U_B'$ connect to $U_R\in\mathcal{T}$ by two different waves: \\
$\mathfrak{S}_2+\mathfrak{R}_3$  and $\mathfrak{C}_2$. Both states satisfy the Rankine-Hugoniot and Lax-entropy conditions.\medskip
\end{center}
Moreover, the total variation of these solutions can be large even $|\r_R-\r_B|$, $|m_R-m_B|$ are small. To solve this problem, we impose an additional condition on the solutions:
\begin{enumerate}
\item[($\CMcal{E}$)] A weak solution $U=(\rho, m,E)$ is the entropy solution of $\CMcal{BR}_C(x_B,t_0)$ if $U$ has the least total variation in $\rho$ within all weak solutions of $\CMcal{BR}_C(x_B,t_0)$.
\end{enumerate}

Under the condition $(\CMcal{E})$, we can select the unique entropy solution for $\CMcal{BR}_C(x_B,t_0)$. In addition, the entropy solution does not consist of the 0-speed shock from the first characteristic field attached on the boundary $x=x_B$. The following theorem states the existence and uniqueness of entropy
solutions for $\CMcal{R}_C(x_0,t_0)$ and $\CMcal{BR}_C(x_B,t_0)$.

\begin{thm}
{\rm \cite{HYH,IT1,IT2,S2}} Suppose $U_L \in\Omega$. Then, there is a neighborhood $\Omega_1\subset\Omega$ of $U_L$ such that if $U_R\in\Omega_1$, $\CMcal{R}_C(x_0,t_0)$ has a unique solution consisting of at most four constant states separated by shocks, rarefaction waves, and contact discontinuity. Moreover, under the additional condition $(\CMcal{E})$, there exist a neighborhood $\Omega_2\subset\Omega$ of $U_R$ and $E_B>0$ exist such that $U_B=(\rho_B, m_B,E_B)\in\Omega_2$ and $\CMcal{BR}_C(x_B,t_0)$ admits a unique self-similar solution $U$ satisfying $U(x_B,t)=U_B$.
\end{thm}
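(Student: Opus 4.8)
For $\CMcal{R}_C(x_0,t_0)$ I would only recall the classical Lax construction. Since \eqref{homo} is strictly hyperbolic with $\lambda_1<\lambda_2<\lambda_3$ and, as computed above, the first and third fields are genuinely nonlinear while the second is linearly degenerate, for $i=1,2,3$ the $i$-th forward wave curve $\varphi_i(\cdot\,;U_L)$ — the $i$-rarefaction curve joined $C^2$ at $U_L$ to the Lax-admissible part of the $i$-Hugoniot curve, the $2$-curve being the integral curve of $R_2$ — is a $C^2$ arc tangent to $R_i(U_L)$ at $U_L$. I would then form the composite $\Phi(\varepsilon_1,\varepsilon_2,\varepsilon_3)=\varphi_3\big(\varepsilon_3;\varphi_2(\varepsilon_2;\varphi_1(\varepsilon_1;U_L))\big)$, whose differential at the origin is the matrix with columns $R_1(U_L),R_2(U_L),R_3(U_L)$, nonsingular since eigenvectors for distinct eigenvalues are independent. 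The inverse function theorem gives a neighbourhood $\Omega_1$ of $U_L$ and, for every $U_R\in\Omega_1$, a unique $(\varepsilon_1,\varepsilon_2,\varepsilon_3)$ with $\Phi(\varepsilon)=U_R$; reading off the constant states $U_L,U_{*1},U_{*2},U_R$ and inserting the corresponding elementary waves produces the unique admissible self-similar solution, with at most four constant states.

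The boundary-Riemann problem $\CMcal{BR}_C(x_B,t_0)$ is the genuinely new point, because $U_R$ lies near the transition surface $\CMcal{T}=\{u=c\}$ on which $\lambda_1=u-c$ vanishes; hence $x=x_B$ is (nearly) characteristic for the first field, the two data $\rho_B,m_B$ do not by themselves determine a solution, and this is the non-uniqueness displayed in Figure 1. The plan is to seek a centred wave fan in $\xi=(x-x_B)/(t-t_0)\ge 0$ of the form $U_B\to(\text{$1$-wave})\to U_{*1}\to(\mathfrak{C}_2)\to U_{*2}\to(\text{$3$-wave})\to U_R$ with all wave speeds nonnegative, the unknowns being the three wave strengths $\alpha=(\alpha_1,\alpha_2,\alpha_3)$ and the boundary energy $E_B$ (so $U_B=(\rho_B,m_B,E_B)$, its first two coordinates prescribed). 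Peeling the backward wave curves off $U_R$ — a $3$-wave, then the $2$-contact, then a $1$-wave, of strengths $\alpha_3,\alpha_2,\alpha_1$ — defines a state $\Psi(\alpha)$ with $\Psi(0)=U_R$ and $D\Psi(0)$ having columns proportional to $R_1(U_R),R_2(U_R),R_3(U_R)$, and I would impose the two constraints $\pi_{\rho,m}\Psi(\alpha)=(\rho_B,m_B)$, where $\pi_{\rho,m}$ is the projection onto the $(\rho,m)$-coordinates. The $(\alpha_2,\alpha_3)$-block of the $2\times 3$ Jacobian $D(\pi_{\rho,m}\Psi)(0)$ is proportional to the matrix with columns $(1,u_R)^T$ and $(1,u_R+c_R)^T$, of determinant $c_R\ne 0$; so the implicit function theorem solves $(\alpha_2,\alpha_3)$ — and thus $E_B$ — for $(\rho_B,m_B)$ near $\pi_{\rho,m}U_R$ in terms of the free first-family strength $s:=\alpha_1$, yielding a one-parameter family $s\mapsto(\alpha(s),E_B(s))$ of weak solutions. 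Along the way, using $c\ge c_*>0$ near $\CMcal{T}$, one checks $E_B(s)>0$ and that $U_B(s)$ stays in a neighbourhood $\Omega_2$ of $U_R$.

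To single out one solution I would invoke $(\CMcal{E})$. Because $\rho_B$ and $\rho_R$ are fixed while $\rho_{*1}(s)$ and $\rho_{*2}(s)$ are, to leading order, affine in $s$, the total variation of $\rho$ across the fan, $|\rho_B-\rho_{*1}(s)|+|\rho_{*1}(s)-\rho_{*2}(s)|+|\rho_{*2}(s)-\rho_R|$, is a convex, piecewise-affine function of $s$ up to higher-order corrections; by nondegeneracy it has a unique minimiser $s^*$, at which the first-family strength is forced down to the least value compatible with the nonnegative-speed requirement, in particular excluding the $0$-speed $1$-shock attached to $x=x_B$ recorded after $(\CMcal{E})$. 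The selected fan then has strictly positive leading wave speed, so its trace at $x=x_B$ is the genuine constant state $U_B$, i.e. $U(x_B,t)=U_B$, which completes the argument. The hard part — and where the argument leaves the textbook non-characteristic boundary-Riemann theory — is precisely the degeneracy $\lambda_1=0$ on $\CMcal{T}$: one must show that the selection rule $(\CMcal{E})$ is well posed (the minimiser $s^*$ exists and is unique) and that the selected fan genuinely has strictly positive leading speed, so that $(\rho_B,m_B)$ are attained exactly at the boundary rather than only as a one-sided sonic limit.
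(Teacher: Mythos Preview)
The paper does not give its own proof of this theorem; it is stated as a citation to \cite{HYH,IT1,IT2,S2} and used as a black box. Your first paragraph, reconstructing the classical Lax argument for $\CMcal{R}_C$, is the standard proof found in \cite{LA,S2} and is correct.

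For $\CMcal{BR}_C$ your outline captures the right picture: near $\CMcal{T}$ the first characteristic is nearly tangent to the boundary, two boundary data $(\rho_B,m_B)$ underdetermine the fan, and one obtains a one-parameter family indexed by the $1$-wave strength $s$, from which condition $(\CMcal{E})$ must select one member. The implicit-function step solving $(\alpha_2,\alpha_3)$ in terms of $s$ via the nonsingular $(\alpha_2,\alpha_3)$-block (determinant $c_R\neq 0$) is correct. However, two points are asserted rather than argued. First, the claim that the total $\rho$-variation is ``convex, piecewise-affine in $s$ up to higher-order corrections'' with a \emph{unique} minimizer is not secured: a piecewise-affine convex function may have a flat minimum, and you have not shown the higher-order terms do not destroy strict convexity or create spurious minima. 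Second, the inference that the minimizer $s^*$ automatically lies at ``the least value compatible with the nonnegative-speed requirement'' and therefore excludes the $0$-speed $1$-shock is precisely the delicate resonance issue when $U_R\in\CMcal{T}$; you acknowledge this as the hard part but do not close it. These are exactly the points handled in the cited references --- \cite{IT1,IT2} for the general resonant boundary framework and \cite{HYH} for this specific system --- so your strategy is consistent with the literature the paper invokes, but the two gaps above would need to be filled before the argument stands on its own.
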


We can now construct the approximate solutions for $\CMcal{R}_G(x_0,t_0;g)$ and $\CMcal{BR}_G(x_B,t_0;g)$ by using Theorem 2.1. Let $\widetilde{U}=(\tilde{\r},\tilde{m},\widetilde{E})^T$ be the entropy solution of $\CMcal{R}_C(x_0,t_0)$. Then, for $(x,t)\in D(x_0,t_0)$, the approximate solution $U$ of $\CMcal{R}_G(x_0,t_0;g)$ is given by
\begin{equation}
\label{aproxsol}
U(x,t)=\widetilde{U}(x,t)+\widebar{U}(x,t),
\end{equation}
where $\widebar{U}(x,t)$ is constructed using the following steps: (1) Linearizing of the system \eqref{3x3system} around the homogeneous solution $\widetilde{U}$. (2) Averaging the coefficient of the linearized system. (3) Applying the operator-splitting method to the modified system. The detailed construction of $\widebar{U}(x,t)$ is provided in appendix A. On the basis of these steps and \eqref{aproxsol} and \eqref{perturbsol}, the approximate solution $U(x,t)$ can be expressed as
\begin{equation}
\label{aproxsol4}
U(x,t)=S(x,t,\widetilde{U})\widetilde{U},
\end{equation}
where
\begin{equation}
\label{solver}
S(x,t,\widetilde{U})=\left[\begin{array}{ccc}
e^{h\tilde{u}t}\cosh(hvt) & 0 & 0 \\
\vspace{-0.4cm} \\
ve^{h\tilde{u}t}\sinh(hvt) & e^{h\tilde{u}t}\cosh(hvt) & 0 \\
S_{31} & S_{32} & S_{33}
\end{array}\right],
\end{equation}
and $v=v(x)$ is defined as
\begin{equation}
\label{grav}
v(x)=\sqrt{\frac{GM_p}{2x}},
\end{equation}
and
\begin{eqnarray*}
&&S_{31}=\frac{-xq}{2\g\tilde{m}}(e^{\g h\tilde{u}t}-1)-\frac{v^3}{v^2-(\g-1)^2\tilde{u}^2}
(ve^{\g h\tilde{u}t}-ve^{h\tilde{u}t}\cosh(hvt)-(\g-1)\tilde{u}e^{h\tilde{u}t}\sinh(hvt)), \\
&&S_{32}=\frac{(\g-1)\tilde{u}(v^2+(\g-1)\tilde{u}^2)}{2(v^2-(\g-1)^2\tilde{u}^2)}
(e^{\g h\tilde{u}t}-e^{h\tilde{u}t}\cosh(hvt))
+\frac{v(2v^2-(3\g-2)(\g-1)\tilde{u}^2)}{2(v^2-(\g-1)^2\tilde{u}^2)}e^{h\tilde{u}t}\sinh(hvt), \\
&&S_{33}=-\frac{(\g-1)(v^2+(\g-1)\tilde{u}^2)}{v^2-(\g-1)^2\tilde{u}^2}e^{\g h\tilde{u}t}
+\frac{\g v}{v^2-(\g-1)^2\tilde{u}^2}(ve^{h\tilde{u}t}\cosh(hvt)+(\g-1)\tilde{u}e^{h\tilde{u}t}\sinh(hvt)).
\end{eqnarray*}
Furthermore, through complex computation, we have
\begin{equation}
\label{1stubar}
\widebar{U}(x,t)
=-\frac{\tilde{\r}\D t}{x}\Big(2\tilde{u},2(\tilde{u}^2+v^2),\tilde{u}\Big(\tilde{u}^2+2v^2+\frac{2\tilde{c}^2}{\g-1}\Big)-\frac{q}{x\tilde{\r}}\Big)^T+O(1)(\D t)^2.
\end{equation}
Consequently,
\begin{equation}
\label{2.13.1}
|\widebar{U}|=|(S-I_3)\widetilde{U}|=O(1)(\Delta t),
\end{equation}
and $\widebar{U}\rightarrow 0$ as $\D t\to 0$ or $h\to 0$, which is consistent with the entropy solution for homogeneous conservation laws. Moreover, the approximation in \eqref{aproxsol4} continues to be true when $\widetilde{U}$ is a constant state.

The construction of the approximate solution for $\CMcal{BR}_G(x_B,t_0;g)$ is similar to that for $\CMcal{R}_G(x_0,t_0;g)$. Therefore, the approximate solution for $\CMcal{BR}_G(x_B,t_0;g)$ is also given by \eqref{aproxsol4}. The perturbation $\widebar{U}$ in $\CMcal{BR}_G(x_B,t_0;g)$ may not satisfy $\widebar{U}(x_B,t)=0$ because of \eqref{1stubar}, which means that the approximate solution $U$ may not match the boundary condition.
However by \eqref{2.13.1}, the error between the approximation $U$ and the boundary data $\widehat{U}_B:=(\r_B,m_B,0)^T$ can be estimated by
\begin{equation}
\label{2.14}
|U(x_B,t)-\widehat{U}_B|\le|\widebar{U}(x_B,t)|+|\widetilde{U}(x_B,t)-\widehat{U}_B|=O(1)(\Delta t)+\underset{D(x_B,t_0)}{\osc}\{\widehat{U}_B\},
\end{equation}
where $\underset{D(x_B,t_0)}{\osc}\{\widehat{U}_B\}$ denotes the oscillation of a function $\widehat{U}_B$ in the set $D(x_B,t_0)$. This indicates that such approximation does not affect the stability and consistency of the generalized Glimm method, which will be discussed later.

\subsection{Residuals of the approximate solutions for Riemann and boundary-Riemann problems}

To demonstrate the consistency of the generalized Glimm scheme, it is necessary to calculate the residuals of the approximate solutions for
Riemann and boundary-Riemann problems. Given a measurable function $U$,
region $\Gamma \subset\Pi$, and test function $\phi\in C^1_0(\Gamma)$, the residual of $U$ for \eqref{3x3system} in $\Gamma$ is defined as
\begin{equation}
\label{res1}
R(U, \Gamma,\phi):=\iint_{\Gamma}\left\{U\phi_t+f(U)\phi_x+h(x)g(x,U)\phi\right\}dxdt.
\end{equation}
We have the following estimates.
\begin{thm}
Let $U$ and $U^B$ be the approximate solutions of the Riemann problem $\CMcal{R}_G(x_0,t_0;g)$ and the boundary-Riemann problem $\CMcal{BR}_G(x_B,t_0;g)$ respectively. Let $\phi\in C^1_0(\G)$ be a test function. Suppose that $U=\widetilde{U}+\widebar{U}$ and $U^B=\widetilde{U}^B+\widebar{U}^B$. Then the residuals of $U$ and $U^B$ can be estimated respectively by
\begin{align}
\label{RPres}
&R(U,D(x_0,t_0),\phi)\nonumber\\
&=\int^{x_0+\Delta{x}}_{x_0-\Delta{x}}(U\phi)(x,t)\Big|^{t=t_0+\Delta t}_{t=t_0}dx
+ \int^{t_0+\Delta{t}}_{t_0}(f(U)\phi)(x, t)\Big|^{x=x_0+\Delta x}_{x=x_0-\Delta x}dt\nonumber\\
   &\quad+O(1)\left((\Delta t)^2(\Delta
   x)+(\Delta{t})^3+(\Delta{t})^2\underset{D(x_0,t_0)}{\osc}\{\widetilde{U}\}\right)\|\phi\|_\infty,
\end{align}
and
\begin{align*}
&R(U^B, D(x_B,t_0), \phi)\nonumber\\
&=\int^{x_B+\Delta{x}}_{x_B}(U^B\phi)(x, t)\Big|^{t=t_0+\D t}_{t=t_0}dx
+\int^{t_0+\Delta{t}}_{t_0}(f(U^B)\phi)(x, t)\Big|^{x=x_B+\Delta x}_{x=x_B}dt\nonumber\\
   &\quad+O(1)\left((\Delta t)^2(\Delta
   x)+(\Delta{t})^3+(\Delta{t})^2\underset{D(x_B,t_0)}{osc}\{\widetilde{U}^B\}\right)\|\phi\|_\infty,
\end{align*}
where $\underset{\Lambda}{osc}\{w\}$ denotes the oscillation of a function $w$ in the set $\Lambda$, and $D(x_0,t_0)$ and $D(x_B,t_0)$ are given by \eqref{ingrid} and \eqref{bdgrid}, respectively.
\end{thm}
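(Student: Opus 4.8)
The plan is to compute the residual $R(U,D(x_0,t_0),\phi)$ by integration by parts and then control all the error terms that arise because $U=\widetilde U+\widebar U$ is only an \emph{approximate} solution. First I would start from the definition \eqref{res1} and integrate the first two terms of the integrand by parts over the rectangle $D(x_0,t_0)$: the term $U\phi_t$ produces the boundary contribution $\int_{x_0-\Delta x}^{x_0+\Delta x}(U\phi)(x,t)\big|_{t_0}^{t_0+\Delta t}\,dx$ minus $\iint U_t\phi$, and the term $f(U)\phi_x$ produces $\int_{t_0}^{t_0+\Delta t}(f(U)\phi)(x,t)\big|_{x_0-\Delta x}^{x_0+\Delta x}\,dt$ minus $\iint f(U)_x\phi$. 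Collecting these, the residual equals the two stated boundary integrals plus $\iint_{D(x_0,t_0)}\{-U_t-f(U)_x+h(x)g(x,U)\}\phi\,dx\,dt$, so the whole problem reduces to bounding this last double integral, i.e. to showing the approximate solution satisfies the PDE up to the claimed order.

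The key step is therefore the pointwise estimate of $-U_t-f(U)_x+h(x)g(x,U)$ inside the Riemann cell. Here I would split $U=\widetilde U+\widebar U$ and Taylor-expand $f(U)=f(\widetilde U)+df(\widetilde U)\widebar U+O(|\widebar U|^2)$. Since $\widetilde U$ exactly solves the homogeneous system \eqref{homo}, the terms $-\widetilde U_t-f(\widetilde U)_x$ cancel, leaving $-\widebar U_t-(df(\widetilde U)\widebar U)_x+h(x)g(x,\widetilde U)+$ (lower-order corrections). By the construction in Appendix A, $\widebar U$ solves (after averaging of coefficients and operator splitting) precisely the linearized system whose right-hand side is $h(x)g(x,\widetilde U)$; the discrepancies are (i) the averaging of the coefficient $df(\widetilde U)$ and of $h(x)$, $g(x,\widetilde U)$ over the cell, which costs one factor of the oscillation of $\widetilde U$ on $D(x_0,t_0)$ times $\widebar U=O(\Delta t)$, hence $O(\Delta t)\,\osc\{\widetilde U\}$; (ii) the splitting error, which is $O(\Delta t)$ relative to $\widebar U$, hence $O((\Delta t)^2)$; (iii) the quadratic term $O(|\widebar U|^2)=O((\Delta t)^2)$ from \eqref{2.13.1}; and (iv) the difference $g(x,U)-g(x,\widetilde U)=O(|\widebar U|)=O(\Delta t)$, again multiplied by $h$ which is smooth and bounded on $[x_B,\infty)$. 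Thus the integrand is $O(1)\big((\Delta t)+(\Delta t)\,\osc_{D(x_0,t_0)}\{\widetilde U\}\big)$ pointwise.

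Integrating this pointwise bound against $\phi$ over $D(x_0,t_0)$, whose area is $2\Delta x\,\Delta t$, and also noting that along a genuinely nonlinear wave inside the cell the oscillation of $\widetilde U$ is itself $O(\Delta t)$ (wave strengths times time), produces terms of size $O(1)\big((\Delta t)^2\Delta x+(\Delta t)^2\Delta x\,\osc\{\widetilde U\}\big)\|\phi\|_\infty$; the stated form $(\Delta t)^2(\Delta x)+(\Delta t)^3+(\Delta t)^2\osc\{\widetilde U\}$ follows after using $\Delta x\sim\lambda_*\Delta t$ from the CFL condition \eqref{CFL} to trade one $\Delta x$ for $\Delta t$ in the oscillation term and absorbing a spurious $\Delta x$ factor. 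The boundary-Riemann estimate is obtained by the identical argument on $D(x_B,t_0)$, replacing $\widetilde U$ by $\widetilde U^B$; the only new point is that the left edge $x=x_B$ contributes the flux term $\int_{t_0}^{t_0+\Delta t}(f(U^B)\phi)(x_B,t)\,dt$ explicitly (rather than being cancelled by an adjacent cell), which is why it appears in the stated formula, and the mismatch \eqref{2.14} between $U^B(x_B,t)$ and $\widehat U_B$ is harmless since it only enters the already-displayed boundary flux integral, not the error term.

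\medskip

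\noindent\textbf{Main obstacle.} The routine part is the integration by parts; the delicate part is verifying that the Appendix-A construction of $\widebar U$ — linearization, coefficient averaging, and operator splitting applied in sequence — really does leave a residual no worse than $O(\Delta t)$ pointwise (equivalently $O((\Delta t)^2)$ relative to $|\widebar U|$), and in particular that the coefficient-averaging error is genuinely controlled by $\osc_{D}\{\widetilde U\}$ rather than by a fixed constant. This requires keeping careful track of how each of the three construction steps perturbs the equation and checking that the $S_{ij}$ entries in \eqref{solver}, expanded to first order in $\Delta t$, reproduce \eqref{1stubar} with an $O((\Delta t)^2)$ remainder that is uniform on $\Omega$; the strict positivity $c\ge c_*>0$ from \eqref{minc} and $x\ge x_B>0$ guarantee the denominators $v^2-(\gamma-1)^2\tilde u^2$ and the factor $1/x$ stay bounded, so no hidden blow-up occurs.
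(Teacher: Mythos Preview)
Your overall strategy---reduce the residual to boundary integrals plus a bulk error and then bound the bulk error---is the right shape, but there is a genuine gap at the step where you try to estimate $-U_t-f(U)_x+h(x)g(x,U)$ \emph{pointwise} on the cell. The classical Riemann solution $\widetilde U$ contains shocks and a contact discontinuity, so $\widetilde U_t$ and $f(\widetilde U)_x$ (and hence $U_t$, $f(U)_x$) are not functions on $D(x_0,t_0)$: they carry singular measures supported on the discontinuity curves. Saying ``$-\widetilde U_t-f(\widetilde U)_x$ cancels'' and then bounding the remainder pointwise ignores these singular contributions. In the paper's proof this is exactly where most of the work lies: one first splits $R=Q_1+Q_2$ with $Q_1=\iint\{\widetilde U\phi_t+f(\widetilde U)\phi_x\}$ handled purely in weak form, and then decomposes $Q_2$ into the five subregions $D_1,\dots,D_5$ separated by the 1-shock, the 2-contact, and the two edges of the 3-rarefaction fan. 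Green's theorem is applied in each smooth piece, producing line integrals along the discontinuity curves of the form
\[
\int_0^{\Delta t}\bigl((f(U)-f(\widetilde U)-s_i\widebar U)\phi\bigr)(x,t)\Big|_{x=s_it+}^{x=s_it-}dt,
\]
and these do \emph{not} vanish. They are controlled only after invoking the Rankine--Hugoniot relation \eqref{jumpcond} for $\widetilde U$ together with the algebraic identity $df(S\widetilde U_L)\cdot S-S\cdot df(\widetilde U_L)=O(\Delta t)$, which is what yields the $(\Delta t)^2\operatorname{osc}\{\widetilde U\}$ term in the final estimate. Your proposal never generates these jump terms and hence never explains where the oscillation factor really comes from.

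A second, smaller issue: your remark that ``along a genuinely nonlinear wave inside the cell the oscillation of $\widetilde U$ is itself $O(\Delta t)$'' is not correct. The oscillation $\operatorname{osc}_D\{\widetilde U\}$ is essentially $|U_R-U_L|$, a quantity fixed by the Riemann data and independent of $\Delta t$; it is \emph{not} small in general. The $(\Delta t)^2\operatorname{osc}\{\widetilde U\}$ term is genuinely present and cannot be absorbed into $(\Delta t)^3$. Likewise, the rarefaction region $D_4$ needs its own treatment (equations \eqref{diff2}--\eqref{Q242} in the paper), since there $\widetilde U$ depends on $\xi=(x-x_0)/t$ and the time derivative of $(S-I_3)\widetilde U$ picks up an extra $\frac{\xi}{t}\partial_\xi$ contribution that must be shown to integrate to the right order.
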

\begin{proof}
We only demonstrate the calculation of  $R(U,D(x_0,t_0),\p)$; the calculation of $R(U^B,D(x_B,t_0),\p)$ is similar.
Without loss of generality, let $t_0=0,\ D:=D(x_0,t_0)$ with $x_0>x_B$, and $\widetilde{U}$ consists of the 1-shock with speed $s_1$, 2-contact discontinuity with speed $s_2$,
and 3-rarefaction wave with lower speed $s_3^-$ and upper speed $s^+_3$.
    By \eqref{aproxsol4} and \eqref{res1},
\begin{align}
\label{res2}
R(U, D, \phi)&=\iint_D\left\{\widetilde{U}\phi_t+f(\widetilde{U})\phi_x\right\}dxdt
+\iint_D\left\{\widebar{U}\phi_t+(f(U)-f(\widetilde{U}))\phi_x+h(x)g(x,U)\phi\right\} dxdt\nonumber\\
&\equiv Q_1+Q_2.
\end{align}
According to the structure of $\widetilde{U}$, $Q_2$ can be evaluated as
\begin{align}
\label{perturbres}
Q_2&=\left(\int^{\Delta{t}}_{0}\!\!\!\int^{x_0+s_1t}_{x_0-\Delta{x}}+\int^{\Delta{t}}_{0}\!\!\!\int^{x_0+s_2t}_{x_0+s_1t}
 +\int^{\Delta{t}}_{0}\!\!\!\int_{x_0+s_2t}^{x_0+s^-_3t}+\int^{\D t}_0\!\!\!\int_{x_0+s^-_3t}^{x_0+s^+_3t}
 +\int^{\Delta{t}}_{0}\!\!\!\int^{x_0+\Delta{x}}_{x_0+s^+_3t}\right)\nonumber\\
 &\qquad\left\{\widebar{U}\phi_t+(f(U)-f(\widetilde{U}))\phi_x+h(x)g(x,U)\phi\right\} dxdt\nonumber\\
 &\equiv Q_{21}+Q_{22}+Q_{23}+Q_{24}+Q_{25}.
\end{align}
\begin{center}
\includegraphics[scale=1]{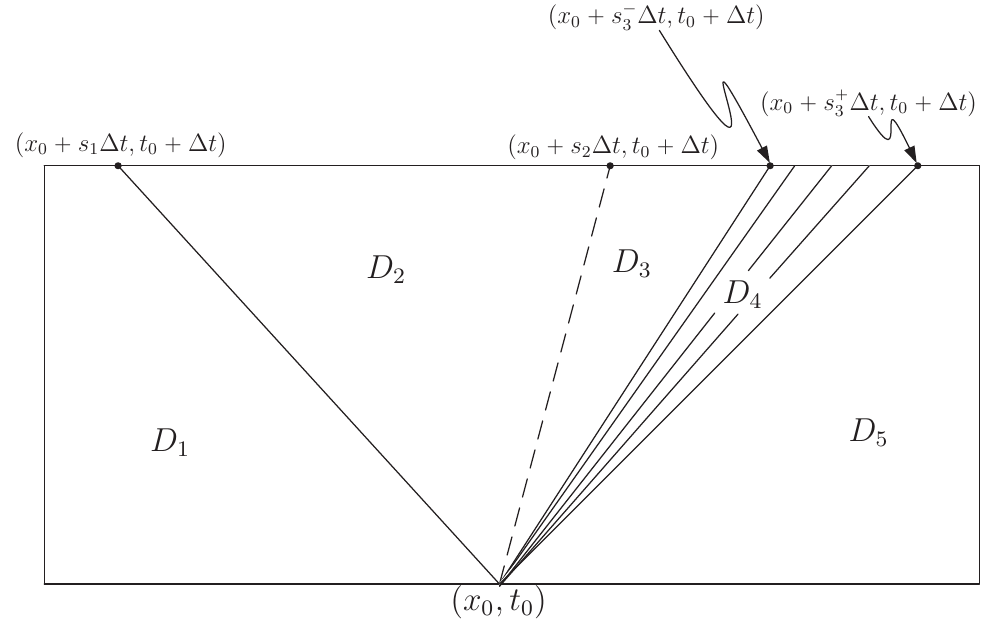}\\
Figure 2. Typical solution to the classical Riemann problem $\CMcal{R}_C(x_0,t_0)$.\medskip
\end{center}
Here $\widetilde{U}$ consists of different constant states in the region $D_i,\ i=1,2,3,5$, and self-similar in $D_4$, where $\{D_i\}_{i=1}^5$  is as shown in Figure 2.  Next, according to \eqref{aproxsol4} and \eqref{solver} and the complicate calculation,
\begin{equation}
\label{diff1}
\begin{split}
\widebar{U}_t-hg(x,U)
&=\partial_t((S-I_3)\widetilde{U})-hg(x,S\widetilde{U})=S_t\widetilde{U}-hg(x,S\widetilde{U}) \\
&=h\tilde{\r}v\big(0,ve^{h\tilde{u}t}\sinh(hvt)\tanh(hvt),\D_3\big)^T=O(1)(\D t)^2,
\end{split}
\end{equation}
where
$$
\D_3=\tanh(hvt)\cdot
\left\{
\begin{split}
&\frac{xq}{2\tilde{m}}(e^{\g h\tilde{u}t}-1)
+\frac{(v^2+(\g-1)\tilde{u}^2)\tilde{c}^2+\g v^4}{v^2-(\g-1)^2\tilde{u}^2}(e^{\g h\tilde{u}t}-e^{h\tilde{u}t}\cosh(hvt)) \\
&\quad-\frac{\tilde{u}v((\g-3)(\g-1)^2\tilde{u}^2+2\g\tilde{c}^2+(2\g^2-3\g+3)v^2)}{2(v^2-(\g-1)^2\tilde{u}^2)}e^{h\tilde{u}t}\sinh(hvt) \\
&\quad+\frac{(\g-1)v^2}{2}e^{h\tilde{u}t}\sinh(hvt)\tanh(hvt)
\end{split}\right\}.
$$
Since $\sinh(hvt)=O(1)\Delta t$, $\tanh(hvt)=O(1)\Delta t$, and $e^{\gamma h \tilde{u}t}-1=O(1)\Delta t$ when $0<t<\Delta t$. Applying Green's theorem and \eqref{diff1} to $Q_{2i},\ i=1,2,3,5$, we have
\begin{align}
\label{Q21}
Q_{21}&=\int^{x_0+s_1\Delta{t}}_{x_0-\Delta{x}}(\widebar{U}\p)(x,\Delta{t})dx-
\int^{x_0}_{x_0-\Delta{x}}(\widebar{U}\p)(x,0)dx
-\int^{\Delta{t}}_{0}s_1(\widebar{U}\p)(s_1t-,t)dt\nonumber\\
&\quad+\int^{\Delta{t}}_{0}\big((f(U)-f(\widetilde{U}))\p\big)(x, t)\Big|^{x=s_1t-}_{x=-\Delta{x}}dt+O(1)(\Delta t)^3(\Delta x)\|\phi\|_\infty , \\
\label{Q22}
Q_{22}&=\int^{x_0+s_2\Delta{t}}_{x_0+s_1\Delta{t}}(\widebar{U}\p)(x,\Delta{t})dx
+\int^{\Delta{t}}_{0}\big((f(U)-f(\widetilde{U})-s_2\widebar{U})\p\big)(s_2t-, t)dt\nonumber\\
&\quad-\int^{\Delta{t}}_{0}\big((f(U)-f(\widetilde{U})-s_1\widebar{U})\p\big)(s_1t+, t)dt+O(1)(\Delta t)^3(\Delta x)\|\phi\|_\infty, \\
\label{Q23}
Q_{23}&=\int^{x_0+s_3^-\Delta{t}}_{x_0+s_2\Delta{t}}(\widebar{U}\p)(x,\Delta{t})dx
+\int^{\Delta{t}}_{0}\big((f(U)-f(\widetilde{U})-s_3^-\widebar{U})\p\big)(s_3^-t, t)dt\nonumber\\
&\quad-\int^{\Delta{t}}_{0}\big((f(U)-f(\widetilde{U})-s_2\widebar{U})\p\big)(s_2t+, t)dt+O(1)(\Delta t)^3(\Delta x)\|\phi\|_\infty,
\end{align}
and
\begin{align}
\label{Q25}
Q_{25}&=\int^{x_0+\D x}_{x_0+s_3^+\D t}(\widebar{U}\p)(x,\Delta{t})dx-
\int_{x_0}^{x_0+\Delta{x}}(\widebar{U}\p)(x,0)dx
+\int^{\Delta{t}}_{0}s_3^+(\widebar{U}\p)(s_3^+t,t)dt\nonumber\\
&\quad+\int^{\Delta{t}}_{0}\big((f(U)-f(\widetilde{U}))\p\big)(x, t)\Big|^{x=\D x}_{x=s_3^+t}dt+O(1)(\Delta t)^3(\Delta x)\|\phi\|_\infty.
\end{align}

Next, $Q_{24}$ is estimated as follows. In the region $D_4=\{(x,t)\mid\frac{x-x_0}{t}\in[s_3^-,s_3^+]\}$, $\widetilde{U}$ is the 3-rarefaction wave; therefore, by \eqref{aproxsol4}  the approximate solution $U$ satisfies
$$
U(x,t)=S(x_0+t\xi,t,\widetilde{U}(\xi))\widetilde{U}(\xi),
$$
where $\xi=\frac{x-x_0}{t}$. It leads to
\begin{eqnarray}
\label{diff2}
&&\widebar{U}_t-h(x_0+t\xi)g(x_0+t\xi,U) \nonumber\\
&&\hspace{1cm}=\partial_t\big((S-I_3)(x_0+t\xi,t,\widetilde{U}(\xi))\widetilde{U}(\xi)\big)
             -h(x_0+t\xi)g\big(x_0+t\xi,S(x_0+t\xi,t,\widetilde{U}(\xi))\widetilde{U}(\xi)\big) \nonumber\\
&&\hspace{1cm}=\partial_t(S-I_3)\widetilde{U}(\xi)-\frac{\xi}{t}\partial_{\xi}(S-I_3)\widetilde{U}(\xi)-\frac{\xi}{t}(S-I_3)\dot{\widetilde{U}}(\xi)
               -h(x_0+t\xi)g(x_0+t\xi,S\widetilde{U}) \nonumber\\
&&\hspace{1cm}=Q_{24}^1(\xi,t)+Q_{24}^2(\xi,t),
\end{eqnarray}
where $\dot{\widetilde{U}}:=\frac{d\widetilde{U}}{d\xi}$, and
\begin{align*}
Q_{24}^1(\xi,t)&\equiv\partial_t(S-I_3)\widetilde{U}(\xi)-h(x_0+t\xi)g(x_0+t\xi,S\widetilde{U}),\\
Q_{24}^2(\xi,t)&\equiv-\frac{\xi}{t}\Big(\partial_{\xi}(S-I_3)\widetilde{U}(\xi)+(S-I_3)\dot{\widetilde{U}}(\xi)\Big).
\end{align*}
According to \eqref{solver} and further calculation, we have
\begin{eqnarray}
\label{Q241}
&&Q_{24}^1(\xi,t)
=\frac{2\xi t}{x_0^2}\Big(\tilde{m},\tilde{\r}(\tilde{u}^2+2v_0^2),\tilde{m}(\widetilde{H}+2v_0^2)\Big)^T
   +O(1)(\D t)^2, \\
\label{Q242}
&&Q_{24}^2(\xi,t)
=\frac{\xi}{x_0}\left[\begin{array}{ccc}
0 & 2 & 0 \\
2(v_0^2-\tilde{u}^2) & 4\tilde{u} & 0 \\
(\g-1)\tilde{u}^3-2\tilde{u}\widetilde{H} & 2(\widetilde{H}-(\g-1)\tilde{u}^2+v_0^2) & 2\g\tilde{u}
\end{array}\right]\dot{\widetilde{U}} \nonumber \\
&&\hspace{2cm}+O(1)\D t,
\end{eqnarray}
where $v_0:=v(x_0)$.
Applying the integration by parts to $Q_{24}$ along with \eqref{diff2}, we obtain
\begin{align}
\label{Q24}
Q_{24}&=\int^{x_0+s_3^+\Delta{t}}_{x_0+s_3^-\Delta{t}}(\widebar{U}\p)(x,\Delta{t})dx
+\int^{\Delta{t}}_{0}\big((f(U)-f(\widetilde{U})-s_3^+\widebar{U})\p\big)(s_3^+t, t)dt\nonumber\\
&\quad-\int^{\Delta{t}}_{0}\big((f(U)-f(\widetilde{U})-s_3^-\widebar{U})\p\big)(s_3^-t, t)dt\nonumber\\
&\quad-\int_0^{\D t}\!\!\!\int_{s_3^-}^{s_3^+}\{Q_{24}^1(\xi,t)+Q_{24}^2(\xi,t)\}\p(t\xi,t)d\xi dt.
\end{align}
According to, \eqref{perturbres} and \eqref{Q21}-\eqref{Q24}
\begin{align}
\label{Q2}
Q_2&=\sum_{i=1}^5Q_{2i}=\int^{\Delta{x}}_{-\Delta{x}}(\widebar{U}\p)(x,t)\Big|_{t=0}^{t=\D t}dx
+\int^{\Delta{t}}_{0}\big((f(U)-f(\widetilde{U})-s_1\widebar{U})\p\big)(x, t)\Big|^{x=s_1t-}_{x=s_1t+}dt\nonumber\\
&\quad+\int^{\Delta{t}}_{0}\big((f(U)-f(\widetilde{U})-s_2\widebar{U})\p\big)(x, t)\Big|^{x=s_2t-}_{x=s_2t+}dt
+\int^{\Delta{t}}_{0}\big((f(U)-f(\widetilde{U}))\p\big)(x, t)\Big|_{x=-\Delta x}^{x=\Delta{x}}dt\nonumber\\
&\quad+O(1)\left((\Delta t)^3(\Delta x)+(\Delta{t})^3+(\Delta{t})^2(\underset{D}{\osc}\{\widetilde{U}\})\right)\|\phi\|_\infty.
\end{align}

We estimate the second and third terms on the right-hand side of \eqref{Q2}. Suppose that the state $\widetilde{U}_1$ is connected to the
state $\widetilde{U}_L=U_L=(\r_L,m_L,E_L)$ by 1-shock on the right and the $\widetilde{U}_2$ by the 2-contact discontinuity on the left. Then, the Rankine-Hugoniot condition \eqref{jumpcond} gives
\begin{equation}
\label{RHcond}
f(\widetilde{U}_1)-f(\widetilde{U}_L)=s_1(\widetilde{U}_1-\widetilde{U}_L),\quad
f(\widetilde{U}_1)-f(\widetilde{U}_2)=s_2(\widetilde{U}_1-\widetilde{U}_2).
\end{equation}
According to \eqref{aproxsol4} and \eqref{RHcond}, we obtain
\begin{equation}
\label{shockres}
\int_0^{\D t}\big((f(U)-f(\widetilde{U})-s_1\widebar{U})\p\big)(x,t)\Big|_{x=s_1t+}^{x=s_1t-}dt
=\Big(O(1)(\D t)^2(\underset{D}{\osc}\{\widetilde{U}\})+O(1)\D t(\underset{D}{\osc}\{\widetilde{U}\})^2\Big)\|\p\|_{\infty},
\end{equation}
because
\begin{eqnarray}
\label{solverestmate}
&&  \int_0^{\D t}\big((f(U)-f(\widetilde{U})-s_1\widebar{U})\p\big)(x,t)\Big|_{x=s_1t+}^{x=s_1t-}dt \nonumber\\
&=& \int_0^{\D t}\big(\big(f(S\widetilde{U}_L)-f(S\widetilde{U}_1)-S(f(\widetilde{U}_L)-f(\widetilde{U}_1))\big)\p\big)(s_1t,t)dt \nonumber\\
&=& \int_0^{\D t}\big(\big(f(S\widetilde{U}_L)-f(S\widetilde{U}_L+S\widetilde{U}_1-S\widetilde{U}_L)\big)\p\big)(s_1t,t)dt \nonumber\\
&&  \quad-\int_0^{\D t}\big(S\big(f(\widetilde{U}_L)-f(\widetilde{U}_L+\widetilde{U}_1-\widetilde{U}_L)\big)\p\big)(s_1t,t)dt
    +O(1)\D t(\underset{D}{\osc}\{\widetilde{U}\})^2\|\p\|_{\infty}\nonumber\\
&=& \int_0^{\D t}\big(\big(df(S\widetilde{U}_L)\cdot S\cdot(\widetilde{U}_1-\widetilde{U}_L)-S\cdot
    df(\widetilde{U}_L)\cdot(\widetilde{U}_1-\widetilde{U}_L)\big) \p\big)(s_1t,t)dt
    +O(1)\D t(\underset{D}{\osc}\{\widetilde{U}\})^2\|\p\|_{\infty}\nonumber\\
&=& \int_0^{\D t}\big(\big(df(S\widetilde{U}_L)\cdot S-S\cdot df(\widetilde{U}_L)\big)\cdot(\widetilde{U}_1-\widetilde{U}_L)\p\big)(s_1t,t)dt
    +O(1)\D t(\underset{D}{\osc}\{\widetilde{U}\})^2\|\p\|_{\infty}\nonumber\\
&=& \Big(O(1)(\D t)^2(\underset{D}{\osc}\{\widetilde{U}\})+O(1)\D t(\underset{D}{\osc}\{\widetilde{U}\})^2\Big)\|\p\|_{\infty}.
\end{eqnarray}
The aforementioned final equality holds because of the complex computation; therefore, we obtain:
\begin{eqnarray*}
df(S\widetilde{U}_L)\cdot S-S\cdot df(\widetilde{U}_L)
&=& df(S(x_0,t,\widetilde{U}_L)\widetilde{U}_L)\cdot S(x_0,t,\widetilde{U}_L)-S(x_0,t,\widetilde{U}_L)\cdot df(\widetilde{U}_L) \\
&=& \Big(hv^2t,-\frac{(\g-1)xhqt}{2\,\tilde{\r}_L},-\frac{(\g-1)^2h\tilde{u}_L^4t}{4}\Big)\Big|_{x=x_0}+O(1)(\D t)^2.
\end{eqnarray*}
Similarly, we have
\begin{equation}
\label{discontres}
\int_0^{\D t}\big((f(U)-f(\widetilde{U})-s_2\widebar{U})\p\big)(x,t)\Big|_{x=s_2t+}^{x=s_2t-}dt
=\Big(O(1)(\D t)^2(\underset{D}{\osc}\{\widetilde{U}\})+O(1)(\D t)(\underset{D}{\osc}\{\widetilde{U}\})^2\Big)\|\p\|_{\infty}.
\end{equation}
Therefore, by \eqref{shockres} and \eqref{discontres}, \eqref{Q2} can be rewritten as
\begin{align}
Q_2&=\int_{-\D x}^{\D x}(\widebar{U}\p)(x,t)\Big|^{t=\D t}_{t=0}dx
     +\int_0^{\D t}\big((f(U)-f(\widetilde{U}))\p\big)(x,t)\Big|_{x=-\D x}^{x=\D x}dt\nonumber\\
&\quad+O(1)\left((\Delta t)^3(\Delta x)+(\Delta{t})^3+(\Delta{t})^2(\underset{D}{\osc}\{\widetilde{U}\})\right)\|\phi\|_\infty.
\end{align}
Since $\widetilde{U}$ is the entropy solution for $\CMcal{R}_C(D)$, then according to the results of \cite{G,S2} and \eqref{RHcond}, we have
\begin{align}
\label{Q1}
\begin{split}
Q_1&=\iint_D\left\{\widetilde{U}\phi_t+f(\widetilde{U})\phi_x\right\}dxdt=\int_{-\D x}^{\D x}(\widetilde{U}\p)(x,t)\Big|_{t=0}^{t=\D t}dx
     +\int_0^{\D t}\big(f(\widetilde{U})\p\big)(x,t)\Big|_{x=-\D x}^{x=\D x}dt.
\end{split}
\end{align}
Following \eqref{res2}, \eqref{Q2} and \eqref{shockres}-\eqref{Q1}, we obtain \eqref{RPres}. The proof is complete.
\end{proof}

\section{Existence of global entropy solutions for IBVP \eqref{IBVP}}
\setcounter{equation}{0}

In this section, we establish the stability of the generalized Glimm scheme and consequently the compactness of the subsequences of approximate solutions $\{U_{\th,\D x}\}$ to \eqref{IBVP}. The stability, which is always the core of the Glimm method, is obtained through the modified wave interaction estimates, nonincreasing property of the Glimm functional, and uniform boundedness of the total variations of the perturbations in approximate solutions. We prove the global existence of entropy solutions to \eqref{IBVP} by demonstrating the consistency of the scheme and entropy inequalities for weak solutions at the end of this section.

\subsection{Generalized Glimm scheme for \eqref{IBVP}}

In this subsection, we introduce a nonstaggered generalized Glimm scheme for the initial boundary value problem \eqref{IBVP}. Let us discretize the domain $\Pi\equiv[x_B,\infty)\times[0,\infty)$ into
\begin{equation*}
x_k=x_B+k\Delta x,\quad t_n=n\Delta t,\quad k, n=0,1,2,\cdots,
\end{equation*}
where $\Delta x$ and $\Delta t$ are small positive constants satisfying the      CFL condition \eqref{CFL}. The $n$th time strip $T_n$ is denoted by
$$
T_{n}:=[x_B,\infty)\times[t_n,t_{n+1}),\quad n=0,1,2,\cdots.
$$
Suppose that the approximate solution $U_{\theta,\Delta{x}}(x,t)$ has been constructed in $T_n$; then, we choose a random number $\theta_n \in (-1, 1)$ and define the initial data $U_k^n\equiv(\rho_k^n,m_k^n,E_k^n)$ in $T_n$ by
$$
U_k^n:=U_{\theta,\Delta{x}}(x_{2k}+\theta_n\Delta{x},t_n^-),\ k=1,2,\cdots.
$$
To initiate the scheme at $n=0$, we set $t_0^-=0$. The points $\{(x_{2k}+\theta_n\Delta{x},t_n^-)\}_{n=0,k=1}^{\infty}$ are called the $mesh \; points$ of the scheme and the points $\{(x_B,t_n+\frac{\D t}{2})\}_{n=0}^{\infty}$ are the mesh points on the boundary $x=x_B$. Then, $U_{\theta,\Delta{x}}$ in $T_{n+1}$ is constructed by solving the set of Riemann problems $\{\CMcal{R}_G(x_k,t_n;g)\}_{k\in\NN}$ with initial data
\begin{align*}
U(x,t_n)=\left\{
\begin{array}{ll}
U^n_k,&\; \mbox{if } x_{2k-1}\leq x<x_{2k},\\
U^n_{k+1},&\;\mbox{if }  x_{2k}<x\leq x_{2k+1},
\end{array}\right.\ k=1,2,\cdots,
\end{align*}
and the boundary-Riemann problem $\CMcal{BR}_G(x_B,t_n;g)$ with initial-boundary data
\begin{alignat}{2}
\label{3.0.3}
\left\{\begin{array}{ll}
U(x,t_n)=U^n_1,&\mbox{if } x_0<x\leq x_1,\\
\r(x_B,t)=\r^n_B:=\r_B(t_{n}),  &\mbox{if } t_n\leq t< t_{n+1},\\
m(x_B,t)=m^n_B:=m_B(t_{n}),  &\mbox{if } t_n\leq t< t_{n+1}.
\end{array}\right.
\end{alignat}
Moreover, near the boundary $x=x_B$, the approximate solution for \eqref{3.0.3} satisfies the entropy condition $(\CMcal{E})$ in Section 2.
\begin{center}
\includegraphics[height = 4cm, width=10cm]{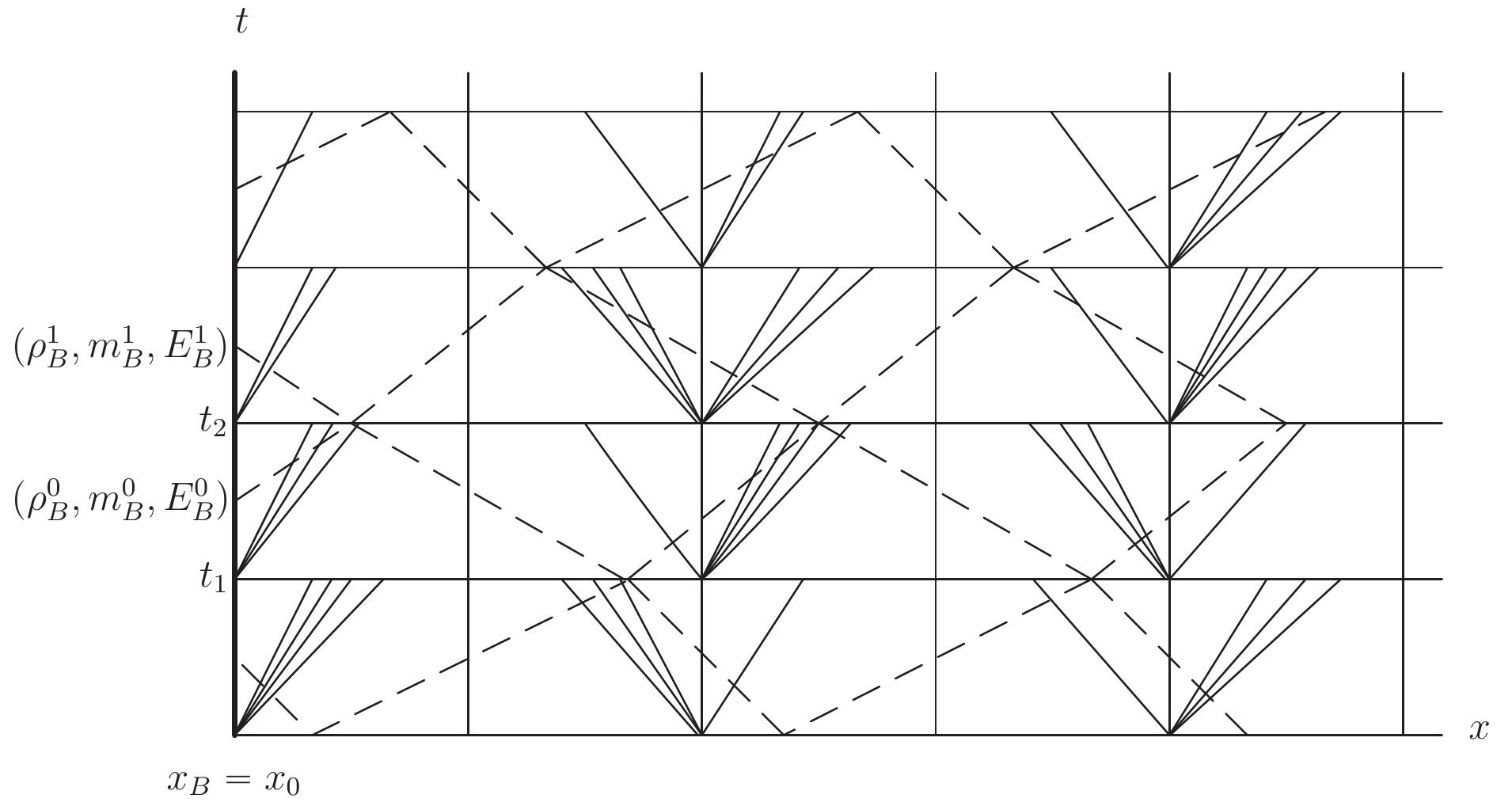}\\
Figure 3. Approximate solution for IBVP with mesh curves.\medskip
\end{center}
According to \eqref{aproxsol4}, the approximate solution $U_{\theta ,\Delta{x}}$ has an explicit representation
\begin{equation*}
U_{\theta ,\Delta{x}}(x,t)=S(x,t,\widetilde{U}_{\th,\D x}(x,t))\widetilde{U}_{\th,\D x}(x,t),\quad (x,t) \in T_{n},
\end{equation*}
where $\widetilde{U}_{\theta,\Delta{x}}$ consists of the weak solutions to the corresponding classical Riemann problems $\{\CMcal{R}_C(x_k,t_n)\}$ or boundary-Riemann problems $\CMcal{BR}_C(x_B,t_n)$ in $T_{n}$. The CFL condition \eqref{CFL} ensures that the elementary waves in each $T_{n}$ do not interact with each other before time $t=t_{n+1}$. Repeating this process, we construct the approximate solution $U_{\theta,\Delta x}$ of \eqref{IBVP}
in $\Pi$ by using the generalized Glimm scheme with a random sequence $\theta:= (\theta_0, \theta_1, \theta_2,\ldots)$, $\th_i\in(-1,1)$ for all $i$.

To obtain the desired estimates, it is favorable to consider curves comprising line segments joining mesh points rather then horizontal lines. Therefore, we define the mesh curves for the non-local Glimm functionals introduced in \cite{G}.
A mesh curve $J$ for \eqref{IBVP} is a piecewise linear curve that connects the mesh point $(x_{2k}+\theta_n\Delta x,t_n)$ on the left with $(x_{2k+2}+\theta_{n+1}\Delta x,t_{n+1})$ or $(x_{2k+2}+\theta_{n-1}\Delta x,t_{n-1})$ on its right, $k,n=0,1,2,\cdots$ together with the line segments joining the points $(x_B,t_n+\frac{\D t}{2})$ and $(x_B+\th_n\D x,t_n)$ and some portion of the boundary (see Figure 3 and \cite{CHS1}). Simultaneously, the mesh curve $J$ divides the domain $[x_0,\infty)\times[0,\infty)$ into $J^+$ and $J^-$ regions such that $J^-$ contains the line $[x_0,\infty)\times\{0\}$. We can partially order two mesh curves by saying $J_2>J_1$ (or $J_2$ is a {\it successor} of $J_1$) if every mesh point of $J_2$ is either on $J_1$ or contained in $J_1^+$. In particular, $J_2$ is an {\it immediate successor} of $J_1$ if
$J_2>J_1$ and all mesh points on $J_2$ except one are on $J_1$.
A diamond region is a closed region enclosed by a mesh curve and its immediate successor.

\subsection{Wave interaction estimates}

In this subsection, several types of nonlinear wave interactions will be described and the classical wave strengths of $\{{U}_{\th,\D x}\}$ in each time step will be estimated through wave interactions between the classical waves and the perturbations in the previous time step.

Connecting all the mesh points through the mesh curves, the domain $\Pi$ is decomposed as a union of the sets of diamond, triangular, and pentagonal regions. The wave interactions can be divided into the following three types:
\begin{enumerate}
\item [(I)] In each diamond region, the incoming generalized waves from adjacent Riemann problems interact with each other and emerge as the outgoing generalized waves of the Riemann problem in the next time step;
\item [(II)] In each triangular region, the incoming generalized waves from the Riemann problem at the boundary interact with each other and emerge as the outgoing generalized waves of the boundary-Riemann problem in the next time step;
\item [(III)] In each pentagonal region, two families of incoming generalized waves, one from the boundary-Riemann problem and the other from adjacent Riemann problem, interact with each other and emerge as the outgoing generalized waves of the Riemann problem in the next time step.
\end{enumerate}
In each diamond (or triangular and pentagonal) region, all the generalized waves comprise classical outgoing waves and perturbations. Therefore, the objective of wave interaction estimates is to estimate how the wave strengths of classical outgoing waves are influenced by the interaction or reflection of generalized incoming waves.

We start with the wave interaction estimates of type (I).
Suppose $(x,t)\in (x_B,\infty)\times[0,\infty)$ and let $\CMcal{R}_G(U_R,U_L;x,t)$ denote the generalized Riemann solution of $\CMcal{R}_G(x,t;g)$ connecting the left constant state $U_L$ with the right constant state $U_R$. Moreover let $\CMcal{R}_C(U_R,U_L;x,t)$ be the solution of the corresponding classical Riemann problem $\CMcal{R}_C(x,t)$. Then, the {\it classical wave strength} of $\CMcal{R}_G(U_R,U_L;x,t)$ is defined as the wave strength of $\CMcal{R}_C(U_R,U_L;x,t)$, which is expressed as
\begin{equation*}
\ve=\varepsilon(U_R,U_L;x,t)=(\ve_1,\ve_2,\ve_3).
\end{equation*}
In other words, the jump discontinuity $\{U_L,U_R\}$ is resolved into $U_L=\widetilde{U}_0$, $\widetilde{U}_1,\ \widetilde{U}_2$, and $\widetilde{U}_3=U_R$ such that $\widetilde{U}_{j}$ is connected to $\widetilde{U}_{j-1}$ on the right by a $j$-wave of strength $\varepsilon_j$. Note that $\CMcal{R}_C(U_R,U_L;x,t)$ is independent of the choice of $(x,t)$. We say that an $i$-wave and a $j$-wave approach if either $i>j$, or else $i = j$ and at least one wave is a shock. Given another $\CMcal{R}_G(U'_R,U'_L;x',t')$ with classical wave strength $\ve'=(\ve_1',\ve_2',\ve_3')$, the {\it wave interaction potential} associated with $\ve$, $\ve'$ is defined as
\begin{equation*}
D(\varepsilon,\varepsilon'):=\sum_{App}\{|\varepsilon_i\varepsilon_j'|:\varepsilon_i \mbox{ and } \varepsilon'_j \mbox{ approach}\}.
\end{equation*}
Assume that $J'$ is an immediate successor of $J$. Let $\Gamma_{k,n}$ denote the diamond region centered at $(x_{2k},t_{n})$ and enclosed by $J$ and $J'$. Four vertices of $\Gamma_{k,n}$, see Figure 4, are denoted by
\begin{align*}
   \begin{array}{ll}
     \mathpzc{N}=(x_{2k}+\theta_{n+1}\Delta x,t_{n+1}),& \mathpzc{E}=(x_{2k}+\theta_{n}\Delta x,t_{n}),\\
     \mathpzc{W}=(x_{2k+2}+\theta_{n}\Delta x,t_{n}), & \mathpzc{S}=(x_{2k}+\theta_{n-1}\Delta x, t_{n-1}),
   \end{array}
 \end{align*}
or
\begin{align*}
   \begin{array}{ll}
     \mathpzc{N}=(x_{2k}+\theta_{n+1}\Delta x,t_{n+1}),& \mathpzc{E}=(x_{2k-2}+\theta_{n}\Delta x,t_{n}),\\
     \mathpzc{W}=(x_{2k}+\theta_{n}\Delta x,t_{n}), & \mathpzc{S}=(x_{2k}+\theta_{n-1}\Delta x, t_{n-1}),
   \end{array}
 \end{align*}
Here $\{\theta_{n-1}, \theta_{n},\theta_{n+1} \}$ are random numbers in $(-1,1)$.

Define the matrix $R(U):=[R_1(U),R_2(U),R_3(U)]$, where $R_j$ is the right eigenvector of $df$ associated with the eigenvalue $\lambda_j$, and $R(U)$ is invertible in $\Omega$. Then, we have the following theorems on the wave interaction estimates.

\begin{thm}
\label{thm3.1}
Let $U_L,\ U_R$, and $U_M$ be constant states in some neighborhood contained in $\Omega$ with $U_L=\widetilde{U}_L+\widebar{U}_L$ and $U_R=\widetilde{U}_R+\widebar{U}_R$, where $\widetilde{U}_L(x_L,t_n):=(\widetilde{\rho}_L,\widetilde{m}_L,\widetilde{E}_L)^T,\ \widetilde{U}_R(x_R,t_n):=(\widetilde{\rho}_R,\widetilde{m}_R,\widetilde{E}_R)^T$, and $U_M(x_M,t_n):=(\rho_M,m_M,E_M)^T$
\begin{eqnarray}
\label{aproxperturb}
&&\begin{split}
&\widebar{U}_L=\big(S(x_L,\D t,\widetilde{U}_L)-I_3\big)\widetilde{U}_L=:(S_L-I_3)\widetilde{U}_L, \\
&\widebar{U}_R=\big(S(x_R,\D t,\widetilde{U}_R)-I_3\big)\widetilde{U}_R=:(S_R-I_3)\widetilde{U}_R,
\end{split} \\
&&x_L:=x_{2k-2}+\th_n\D x,\quad x_R:=x_{2k+2}+\th_n\D x, \nonumber\\
&&x_M=x_{2k}+\th_n\D x.\nonumber
\end{eqnarray}
Suppose that the classical wave strengths of the incoming generalized waves across the boundaries $\mathpzc{WS}$ and $\mathpzc{SE}$ of $\Gamma_{k,n}$ are
\begin{equation*}
\a(U_M,\widetilde{U}_L;x_L,t_{n-1})=(\a_1,\a_2,\a_3) \quad\text{and}\quad
\beta(\widetilde{U}_R,U_M;x_R,t_{n-1})=(\beta_1,\beta_2,\b_3),
\end{equation*}
\begin{center}
\includegraphics[height=4cm, width=7.5cm]{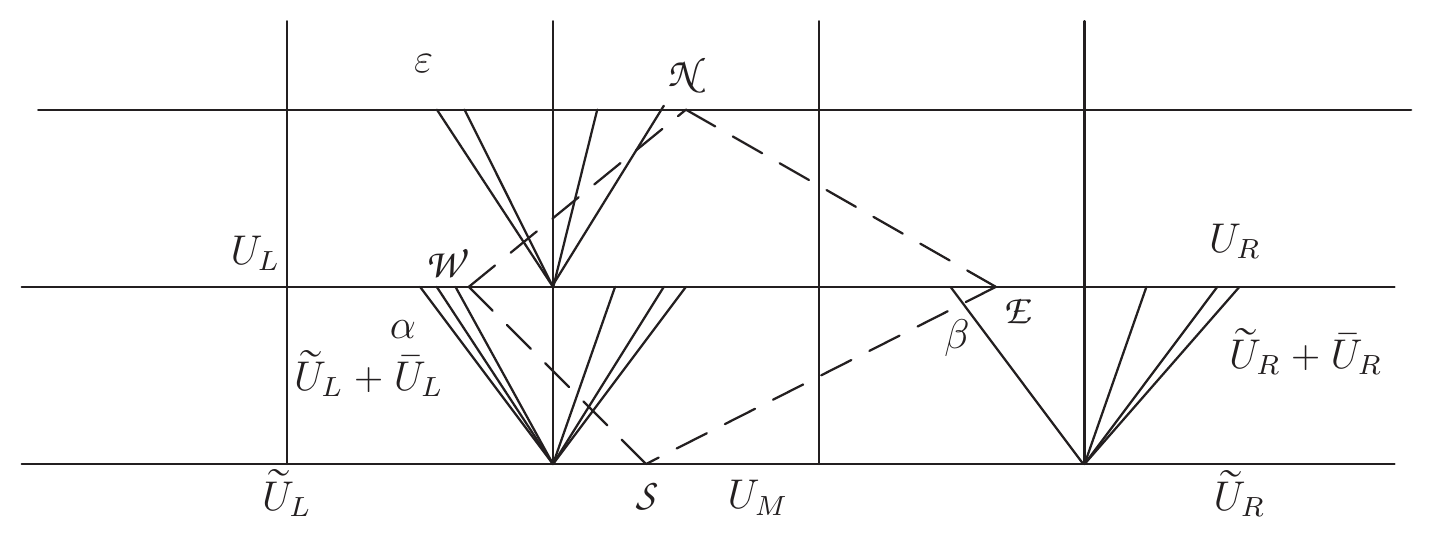}\quad
\includegraphics[height=4cm, width=7.5cm]{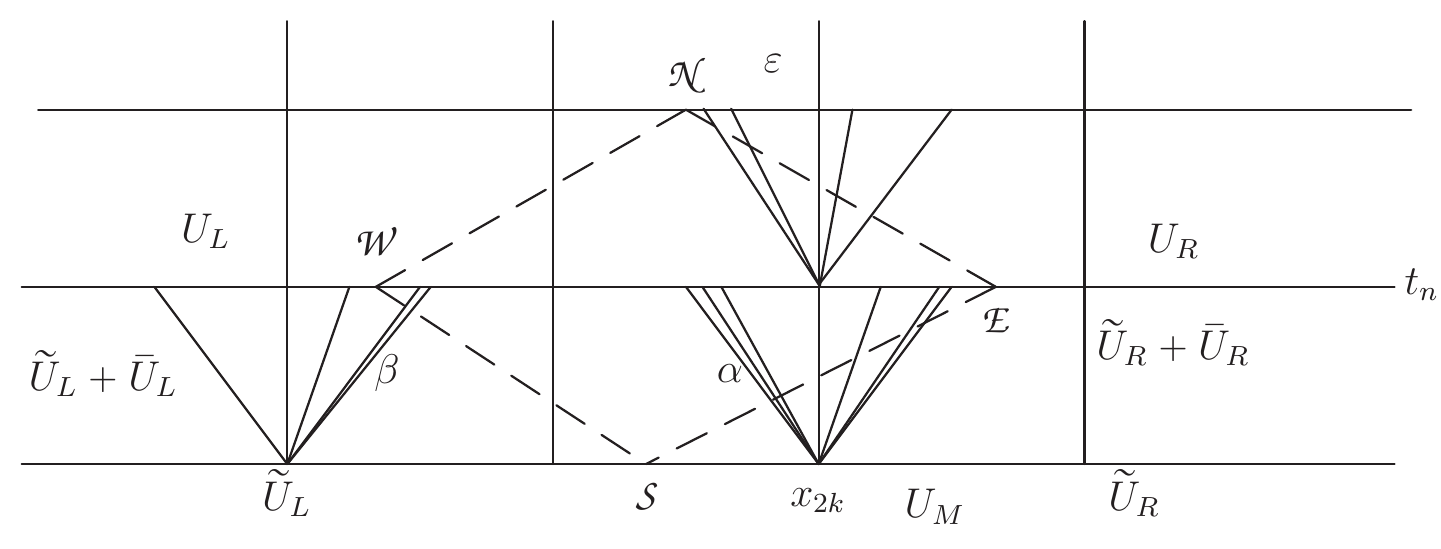}\\
Figure 4. Classical wave strengths in the diamond region $\Gamma_{k,n}$.\medskip
\end{center}
respectively, and the classical wave strength of the outgoing generalized waves across the boundary $\mathpzc{WNE}$ {\rm(}see Figure 4{\rm)} is
\begin{equation*}
\varepsilon(U_R,U_L;x_M,t_n)=(\varepsilon_1,\varepsilon_2,\ve_3),
\end{equation*}
Then there exist constants $C'$ and $C''_{k,n}$ such that
\begin{equation}
\label{3.6}
|\varepsilon|\le(1-\z\D t)(|\a|+|\beta|)+C'D(\a,\beta)+C''_{k,n}\D t\D x+O(1)(\D t)^3,\ \text{as}\ |\a|+|\beta|\rightarrow 0,
\end{equation}
where $\z=\frac{(3-\g)|u_M|}{x_M}$ and
$$
C''_{k,n}:=\sqrt{3}\Big(\frac{\r_Mu_M}{x_M^2}+\frac{2\r_Mv_B^2}{x_M^2c_M}+\frac{(\g-1)|q'_M|}{c_M^2}\Big).
$$
In particular, \eqref{3.6} is reduced to the classical wave interaction estimate in {\rm \cite{G}} when the system \eqref{3x3system} is without the source term.
\end{thm}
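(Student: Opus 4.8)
\textbf{Proof proposal for Theorem \ref{thm3.1}.}

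The plan is to decompose the outgoing classical wave strength $\varepsilon$ into a ``homogeneous'' contribution, coming from the interaction of the \emph{classical} parts of the incoming waves, and a ``perturbative'' contribution, coming from the fact that the incoming states carry the perturbations $\widebar{U}_L,\widebar{U}_R$ and the cell is of width $O(\D x)$ in space and $O(\D t)$ in time. First I would invoke the classical Glimm interaction estimate of \cite{G,S2} applied to the jump $\{\widetilde{U}_L,\widetilde{U}_R\}$ resolved through $U_M$: this gives an intermediate wave vector $\widehat{\varepsilon}$ satisfying $|\widehat{\varepsilon}|\le|\alpha|+|\beta|+C'D(\alpha,\beta)$, which accounts for the last genuinely nonlinear term in \eqref{3.6}. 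The remaining task is then to control $|\varepsilon|-|\widehat{\varepsilon}|$, i.e.\ the error incurred by passing from the classical data $(\widetilde{U}_L,U_M,\widetilde{U}_R)$ at time $t_{n-1}$ to the actual perturbed data $(U_L,U_M,U_R)$ and by re-solving the Riemann problem at the shifted base point $(x_M,t_n)$ rather than at $(x_L,t_{n-1})$ and $(x_R,t_{n-1})$.

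The key computational step is to linearize the map $(U_L,U_R)\mapsto\varepsilon(U_R,U_L;x,t)$ and track each source of error. Writing $U_L=\widetilde{U}_L+\widebar{U}_L$ with $\widebar{U}_L=(S_L-I_3)\widetilde{U}_L$ from \eqref{aproxperturb}, and using the explicit form \eqref{1stubar} of the perturbation together with \eqref{2.13.1}, the shift in the data contributes a term of size $O(1)|\widebar{U}_L|+O(1)|\widebar{U}_R|$ to $\varepsilon$; the point is that this term must be organized so that its \emph{leading} part is exactly $-\zeta\,\D t\,(|\alpha|+|\beta|)$ with $\zeta=\tfrac{(3-\gamma)|u_M|}{x_M}$, a genuine decay, rather than merely $O(\D t)(|\alpha|+|\beta|)$. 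This is where the structure of $S$ in \eqref{solver} is essential: expanding $S_L-I_3 = h\widetilde{u}_L\,\D t\,\mathrm{diag}(1,1,\gamma) + O((\D t)^2)$ plus off-diagonal lower-triangular pieces, and recalling $h(x)=-2/x$, one sees that the action of $dR(U_M)^{-1}$ on the perturbed jump produces, in the $1$- and $3$-characteristic components, precisely the factor $1-\tfrac{(3-\gamma)|u_M|}{x_M}\D t$ after combining the $e^{h\tilde u t}$ and $\cosh(hvt)$ expansions. The residual non-decaying pieces — those proportional to $v_B^2$, to $q'_M$, and to the explicit $x_M^{-2}$ geometric terms in $\widebar{U}$ — do not depend on $|\alpha|+|\beta|$ and are collected into the term $C''_{k,n}\,\D t\,\D x$, with the constant $C''_{k,n}=\sqrt{3}\big(\tfrac{\rho_M u_M}{x_M^2}+\tfrac{2\rho_M v_B^2}{x_M^2 c_M}+\tfrac{(\gamma-1)|q'_M|}{c_M^2}\big)$ read off from \eqref{1stubar} after projecting onto the eigenbasis via $R(U_M)^{-1}$ (the $\sqrt 3$ being the dimensional factor from passing between the Euclidean norm on $\mathbb{R}^3$ and the $\ell^1$ norm on wave strengths). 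Finally, all second-order-in-$\D t$ remainders, the Taylor remainder of $f$, and the error $\underset{D}{\osc}$-type contributions are absorbed into $O(1)(\D t)^3$; here one uses that $\D x = \lambda_*\,\D t$ by the CFL relation \eqref{CFL}, so that $(\D t)^2\D x$ and $(\D t)^3$ are comparable.

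I expect the main obstacle to be the second paragraph's bookkeeping: extracting the \emph{sharp} decay coefficient $\zeta$ rather than a crude $O(\D t)$ bound. This requires keeping the first-order term of $S-I_3$ \emph{and} the first-order term of the re-expansion of the wave-curve parametrization at the new base point, and checking that the two contributions combine with the correct sign in both the $\varepsilon_1$ and $\varepsilon_3$ slots — a cancellation that relies on the genuine nonlinearity relation $\nabla\lambda_i\cdot R_i=\tfrac{(\gamma+1)c}{2\rho}$ and on the specific coefficient $(3-\gamma)/2$ appearing in the momentum flux in \eqref{setting}. A secondary technical point is that one must verify the perturbation estimate \eqref{1stubar} is uniform in the base point $x_M\in[x_B,\infty)$, which follows from assumption $(A_1)$ (the lower bound $\rho\ge\vr$) and $(A_3)$ ($q\in W^{1,1}$), ensuring $C''_{k,n}$ is finite and, after summation over the diamonds in a time strip, yields a convergent series — but that summation is deferred to the Glimm-functional estimate and is not needed for \eqref{3.6} itself. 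Once the decay coefficient is pinned down, the reduction to the classical estimate \cite{G} when $g\equiv0$ is immediate, since then $v\equiv0$, $q\equiv0$, $S\equiv I_3$, hence $\zeta=0$, $C''_{k,n}=0$, and \eqref{3.6} collapses to $|\varepsilon|\le|\alpha|+|\beta|+C'D(\alpha,\beta)$.
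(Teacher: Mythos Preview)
Your overall architecture---linearize the wave-strength map, isolate the classical Glimm interaction term $C'D(\alpha,\beta)$, and extract a contraction factor from the solver $S$---matches the paper's strategy. But the execution you sketch diverges from the paper's at the critical step, and your attribution of the decay mechanism contains an error that would derail the computation.

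The paper does \emph{not} first compute a classical $\widehat\varepsilon$ and then correct. Instead it expands $U_R-U_L$, $U_R-U_M$, $U_L-U_M$ directly in wave coordinates (equations \eqref{3.7}--\eqref{3.11}) and arrives at the exact relation
\[
\varepsilon^T=\Phi(U_M)(\alpha+\beta)^T+\sum_{j<i}\alpha_i\beta_jL_{ij}(U_M)+R^{-1}(U_M)W(U_M)\,\D t\,\D x+\text{(cubic)},
\]
where $\Phi(U_M)=I_3-\widebar D(U_M)+R^{-1}(S_M-I_3)R$ and $\widebar D$ encodes the re-expansion of the eigenframe at the perturbed base (your ``re-expansion of the wave-curve parametrization''). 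The decay factor $(1-\zeta\D t)$ is then obtained by \emph{diagonalizing} $\Phi(U_M)$ and computing its three eigenvalues explicitly:
\[
\mu_1=1-\tfrac{2u_M}{x_M}\D t,\quad
\mu_2=1-\Big(\tfrac{(3-\gamma)u_M}{x_M}+\tfrac{\gamma(\gamma-1)q_M}{2\rho_Mc_M^2}\Big)\D t,\quad
\mu_3=1-\Big(\tfrac{2u_M}{x_M}+\tfrac{\gamma(\gamma-1)q_M}{\rho_Mc_M^2}\Big)\D t,
\]
up to $O((\D t)^2)$. The bound $|\Phi|\le 1-\zeta\D t$ with $\zeta=\tfrac{(3-\gamma)|u_M|}{x_M}$ is the \emph{worst} (largest) eigenvalue, namely $\mu_2$.

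This is where your proposal goes wrong. You claim the factor $(3-\gamma)$ appears ``in both the $\varepsilon_1$ and $\varepsilon_3$ slots'' and is tied to the genuine-nonlinearity relation $\nabla\lambda_i\cdot R_i=\tfrac{(\gamma+1)c}{2\rho}$. In fact the $1$- and $3$-directions decay at the \emph{faster} rate $\tfrac{2u_M}{x_M}$; the binding constraint $\tfrac{(3-\gamma)u_M}{x_M}$ comes from the \emph{linearly degenerate} direction $\mu_2$, and genuine nonlinearity plays no role in producing it. If you carried out your computation as described you would obtain $\zeta=\tfrac{2|u_M|}{x_M}$, overstate the decay, and miss the actual bottleneck. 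The constant $C''_{k,n}$, on the other hand, you have essentially right: it is $|R^{-1}(U_M)W(U_M)|$ with $W$ the $x$-derivative of $\mathscr F(x,\D t,U)=S(x,\D t,U)U$ at $U_M$, and the $\sqrt3$ is indeed the dimensional norm factor.
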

\begin{proof}
According to the results of \cite{S2}, we have
\begin{eqnarray}
\label{3.7}
&&U_R-U_M=\widebar{U}_R+\sum_{j=1}^3\beta_jR_j+ \sum_{j\leqslant i}\beta_j\beta_i\big(R_j\cdot\nabla
          R_i\big)\left(1-\frac{\delta_{ij}}{2}\right)+O(1)|\beta|^3, \\
\label{3.8}
&&U_L-U_M=\widebar{U}_L+\sum_{j=1}^3(-\a_j)R_j+\sum_{j\geqslant i}\a_j\a_i\big(R_j\cdot\nabla R_i\big)\left(1-\frac{\delta_{ij}}{2}\right)+O(1)|\a|^3,
\end{eqnarray}
where $R_j$ is the right eigenvector of $df$ associated with the eigenvalue $\lambda_j$, and $\delta_{ij}$ is the Kronecker delta. In addition, the coefficients in \eqref{3.7} and \eqref{3.8} are all evaluated at $U_M$. Similarly,
\begin{align}
\label{3.9}
U_R-U_L&=\sum_{i=1}^3\varepsilon_iR_i(U_L)+\sum_{j\leqslant i}\varepsilon_j\varepsilon_i\big(R_j\cdot\nabla
         R_i\big)(U_L)\left(1-\frac{\delta_{ij}}{2}\right)+O(1)|\varepsilon|^3\nonumber\\
&=\sum_{i=1}^3\varepsilon_iR_i(\widetilde{U}_L)+\sum_{j\leqslant i}\varepsilon_j\varepsilon_i\big(R_j\cdot\nabla
  R_i\big)(\widetilde{U}_L)\left(1-\frac{\delta_{ij}}{2}\right)\nonumber\\
&\quad+\left(\sum_{i=1}^3\varepsilon_i\nabla R_i(\widetilde{U}_L)\right)\widebar{U}_L+\CMcal{C}(\Delta x,|\varepsilon|),
  R_i\big)\left(1-\frac{\delta_{ij}}{2}\right)+O(|\varepsilon|^3)+(\widebar{U}_R-\widebar{U}_L),
\end{align}
where $\CMcal{C}(\Delta x,|\varepsilon|)$ denotes the cubic terms of $\Delta x$ and $|\varepsilon|$. According to \eqref{3.7}-\eqref{3.9}, $\varepsilon=0$ when $\a=\beta=0$ and $\widebar{U}_R-\widebar{U}_L=0$. By \eqref{2.13.1},
$$
\varepsilon=O(1)(\Delta x+|\a|+|\beta|).
$$
According to the Taylor expansion of $R_i$ around $U_M$,
\begin{equation}
\label{3.10}
R_i(\widetilde{U}_L)=R_i(U_M)-\sum_{j=1}^3\a_j(R_j\cdot\nabla R_i)(U_M)+O(1)|\a|^2.
\end{equation}
According to \eqref{3.9} and \eqref{3.10} together with $\varepsilon=O(1)(\Delta x+|\a|+|\beta|)$, the difference of  $U_R$ and $U_L$ is expressed as
\begin{align}
\label{3.11}
U_R-U_L
&=\sum_{i=1}^3\varepsilon_iR_i(U_M)+\sum_{j\leqslant i}\varepsilon_j\varepsilon_i(R_j\cdot\nabla
  R_i)(U_M)\left(1-\frac{\delta_{ij}}{2}\right)\nonumber\\
&\quad-\sum_{\scriptstyle i,j}\varepsilon_i\a_j(R_j\cdot\nabla
  R_i)(U_M)+\left(\sum_{i=1}^3(\a_i+\beta_i)\nabla R_i(U_M)\right)\widebar{U}_L\nonumber\\
&\quad+\CMcal{C}(\Delta x,|\a|+|\beta|).
\end{align}
According to \eqref{2.13.1} and \eqref{aproxperturb}, $\widebar{U}_L=O(1)(\Delta t)=O(1)(\Delta x)$ and
\begin{align}
\label{3.14}
\widebar{U}_R-\widebar{U}_L
&=(S_R-I_3)\widetilde{U}_R-(S_L-I_3)\widetilde{U}_L \nonumber\\
&=(S_M-I_3)(\widetilde{U}_R-\widetilde{U}_L)+(S_R-S_M)\widetilde{U}_R+(S_M-S_L)\widetilde{U}_L.
\end{align}
By comparing \eqref{3.11} with \eqref{3.7}, and \eqref{3.8}, and using \eqref{3.14}, we obtain
\begin{align}
\label{3.17}
\varepsilon^T
&=(\a+\beta)^T+\sum_{j<i}\a_i\beta_jL_{ij}(U_M)-R^{-1}(U_M)\left(\sum_{i=1}^3(\a_i+\beta_i)\nabla R_i(U_M)\right)(S_L-I_3)\widetilde{U}_L\nonumber\\
&\quad+R^{-1}(U_M)(S_M-I_3)(\widetilde{U}_R-\widetilde{U}_L)+R^{-1}(U_M)\big((S_R-S_M)\widetilde{U}_R+(S_M-S_L)\widetilde{U}_L\big)\nonumber\\
&\quad+\CMcal{C}(\Delta x,|\a|+|\beta|),
\end{align}
where $R=[R_1,R_2,R_3]$ and $L_{ij}:=R^{-1}(R_i\cdot \nabla R_j-R_j\cdot \nabla R_i)$. To estimate \eqref{3.17}, we need to evaluate all terms at the state $U_M$. After a complex calculation and using the Taylor expansion with respect to $\D t$, the term $(S_L-I_3)\widetilde{U}_L$ is estimated as follows:
\begin{align}
\label{3.17-1}
(S_L-I_3)\widetilde{U}_L
&=(S_M-I_3)U_M+(S_M-I_3)(\widetilde{U}_L-U_M)+(S_L-S_M)\widetilde{U}_L \nonumber \\
&=(S_M-I_3)U_M+O(1)\osc\{U\}\D t+O(1)\D t\D x,
\end{align}
where $S_M:=S(x_M,\D t,U_M)$.

Next, we estimate $(S_R-S_M)\widetilde{U}_R+(S_M-S_L)\widetilde{U}_L$ in \eqref{3.17}. Define $\mathscr{F}(x,\D t,\widetilde{U}):=S(x,\D t,\widetilde{U})\widetilde{U}$; then,
\begin{align}
\label{3.17-2}
&(S_R-S_M)\widetilde{U}_R+(S_M-S_L)\widetilde{U}_L \nonumber\\
&=\mathscr{F}(x_R,\D t,\widetilde{U}_R)-\mathscr{F}(x_L,\D t,\widetilde{U}_L)-S_M(\widetilde{U}_R-\widetilde{U}_L) \nonumber\\
&=\mathscr{F}_x(x_M,\D t,U_M)\D x+(\mathscr{F}_U(x_M,\D t,U_M)-S_M)(\widetilde{U}_R-\widetilde{U}_L) \nonumber\\
&=W(U_M)\D t\D x+\Psi(U_M)(\widetilde{U}_R-\widetilde{U}_L)+\CMcal{C}(\Delta x,|\a|+|\beta|),
\end{align}
where
\begin{align*}
&W(U_M)=\frac{h_M^2}{2}\Big(\r_Mu_M,\r_M(u_M^2+2v_M^2),\r_Mu_M(H_M+2v_M^2)+\frac{x_M^2q_M'}{2}\Big)^T, \\
&\Psi(U_M)=\mathscr{F}_U(x_M,\D t,U_M)-S_M 
\end{align*}
with $h_M:=-\frac{2}{x_M},H_M:=H(U_M)$, and $q_M':=q'(x_M)$. According to \eqref{3.7} and \eqref{3.8}, the difference between $\widetilde{U}_R$ and $\widetilde{U}_L$ is
\begin{equation}
\label{3.19}
\widetilde{U}_R-\widetilde{U}_L=R(U_M)(\a+\beta)^T+O(1)(|\a_i||\a_j|+|\beta_i||\beta_j|).
\end{equation}
Applying the Taylor expansion of \eqref{3.17} at $U_M$ along with \eqref{3.19}, and using \eqref{3.17-1} and \eqref{3.19}, we have
\begin{align}
\label{3.18}
\varepsilon^T
&= \big(I_3-\widebar{D}(U_M)+R^{-1}(S_M-I_3)R\big)(\a+\beta)^T+\sum_{j<i}\a_i\beta_jL_{ij}(U_M) \nonumber\\
&\quad+R^{-1}(U_M)W(U_M)\D t\D x+R^{-1}(U_M)\Psi(U_M)(\widetilde{U}_R-\widetilde{U}_L)+\CMcal{C}(\Delta x,|\a|+|\beta|),
\end{align}
where
\begin{align*}
\widebar{D}(U_M)
:=R^{-1}(U_M)\big(\nabla R_1(U_M)(S_M-I_3)\widetilde{U}_M,\nabla R_2(U_M)(S_M-I_3)\widetilde{U}_M,\nabla R_3(U_M)(S_M-I_3)\widetilde{U}_M\big).
\end{align*}
Define
\begin{align*}
\Phi(U_M):=I_3-\widebar{D}(U_M)-R^{-1}(U_M)(S_M-I_3)R(U_M).
\end{align*}
By a direct calculation of eigenvalues $\nu_1,\nu_2,\text{and}\ \nu_3$ of $\Psi$ and $\m_1,\m_2,\text{and}\ \m_3$ of $\Phi$ evaluated at $x=x_M$, and considering $h_M=-\frac{2}{x_M}$, we obtain
\begin{equation*}
\nu_1=\nu_2=0,\qquad\nu_3=\frac{4v_M^2}{x_M^2}(\D t)^2+O(1)(\D t)^3,
\end{equation*}
\begin{equation*}
\begin{split}
\m_1&=1-\frac{2u_M}{x_M}\D t+O(1)(\D t)^2, \\
\m_2&=1-\Big(\frac{(3-\g)u_M}{x_M}+\frac{\g(\g-1)q_M}{2\r_Mc_M^2}\Big)\D t+O(1)(\D t)^2, \\
\m_3&=1-\Big(\frac{2u_M}{x_M}+\frac{\g(\g-1)q_M}{\r_Mc_M^2}\Big)\D t+O(1)(\D t)^2.
\end{split}
\end{equation*}
We obtain that $0<\m_i < 1,\ i=1,2,3$ when $\Delta t>0$ is sufficiently small. Therefore, there exist non-singular matrices $Q_{\D t}(U_M),\text{and}\ T_{\Delta t}(U_M)$ such that
\begin{align*}
\Psi(U_M)=Q_{\Delta t}\cdot\diag[\nu_1,\nu_2,\nu_3]\cdot Q_{\Delta t}^{-1}, \\
\Phi(U_M)=T_{\Delta t}\cdot\diag[\m_1,\m_2,\m_3]\cdot T_{\Delta t}^{-1}.
\end{align*}
Since $1<\g<\frac{5}{3}$, if $\z=\frac{(3-\g)|u_M|}{x_M}$ is selected, then,
\begin{equation}
\label{3.28}
\begin{split}
|\Psi(U_M)(\widetilde{U}_R-\widetilde{U}_L)|&\le|Q_{\D t}||\diag[\nu_1,\nu_2,\nu_3]||Q_{\D t}^{-1}||R||\a+\b|=\CMcal{C}(\Delta x,|\a|+|\beta|), \\
\left|\Phi(U_M)\right|&\le|T_{\Delta t}||\diag[\m_1,\m_2,\m_3]||T_{\Delta t}^{-1}|\le\max\limits_{1\le i\le3}\m_i\leq 1-\z\D t.
\end{split}
\end{equation}
Finally, by \eqref{3.18} and \eqref{3.28}, we obtain \eqref{3.6}. The proof is complete.
\end{proof}

For the cases (II) and (III), we construct the approximate solution for boundary-Riemann problem by \eqref{aproxsol4}. Due to the construction of the approximate solutions to the boundary-Riemann problems, the approximate solutions do not match the boundary conditions. We need to understand how the
the errors \eqref{2.14} on the boundary affect the interaction of waves. Therefore, the wave interaction near the boundary is more complicated than the wave interaction in case (I). On the other hand, to estimate the wave interaction near the boundary, the exact direction of wave for each characteristic field must be known. Because of the positivity of the initial and boundary velocities, we can prove that the velocity is globally positive, as shown in Section 3.3.

Let us denote the boundary data at $n$th time strip by
$$
\widehat{U}_B^n:=(\r(x_B,t_n),m(x_B,t_n),0)^T=(\r_B^n,m_B^n,0)^T
$$
and let $U_B^n=\widetilde{U}_B^n+\widebar{U}_B^n$ be the solution of the generalized boundary-Riemann problem $\CMcal{BR}_G(x_B,t_n)$. We define the strength of the 0-wave on the boundary as
$$
\a_0:=|R^{-1}(\widetilde{U}_B^n)\cdot(U_B^n-\widehat{U}_B^n)|.
$$
According to \cite{G1}, the following theorem can be proved using the generalized version of Goodman's wave interaction estimates near the boundary:

\begin{thm} {\rm(Boundary interaction estimate)}
Let $\widehat{U}_B^n$ and $U_B^n$ as defined previously, and $U_B^{n+1}:=\widetilde{U}_B^{n+1}+\widebar{U}_B^{n+1}$, where $U_M^n,\ U_R^n,\text{and}\  U_R^{n+1}$
 are defined as in Theorem 3.1., {\rm(}see Figure 5{\rm)}.
\begin{center}
\includegraphics[scale=0.8]{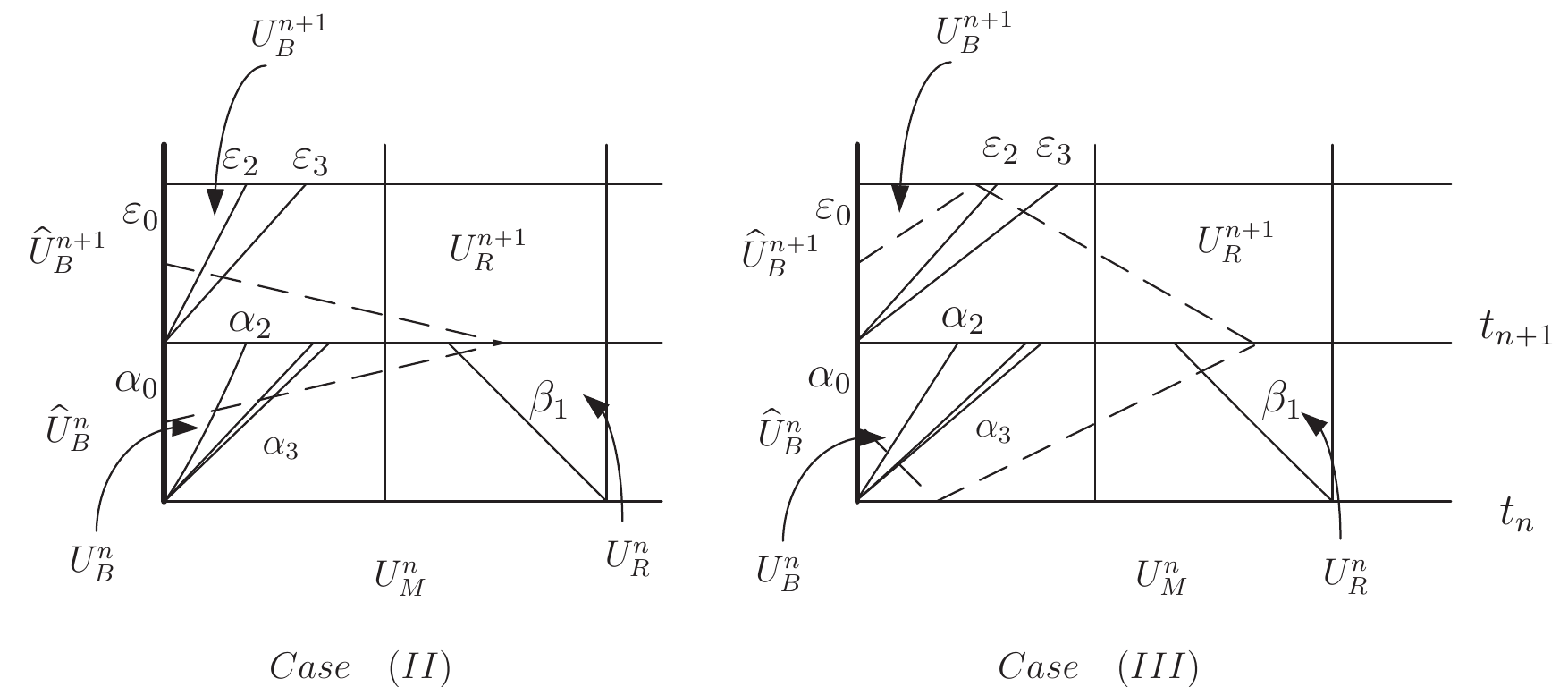}\\
Figure 5. Wave strengths in the region near the boundary $x=x_B$.\medskip
\end{center}
Moreover, let
$$
(\widehat{U}_B^n,U_M^n):=[(\widehat{U}_B^n,U_B^n,U^n_Z,U_M^n)/(\a_0,\a_2,\a_3)],\
(\widehat{U}_B^{n+1},U_R^{n+1}):=[(\widehat{U}_B^{n+1},{U}_B^{n+1},U^{n+1}_Z,U_R^{n+1})/(\varepsilon_0,\varepsilon_2,\ve_3)]
$$
represent the solutions of $\CMcal{BR}_G(\widehat{U}_B^n,U_M^n;x_B,t_n)$ and $\CMcal{BR}_G(\widehat{U}_B^{n+1},U_R^{n+1};x_B,t_{n+1})$, respectively. Assume that $(U_M^n,U_R^n):=[(U_M^n,U_R^n)/(\beta_1)]$ is the 1-wave of $\CMcal{R}_G(x_2,t_n)$ right next to $(\widehat{U}_B^n,U_M^n)$ on the $n$th time strip {\rm(}see Figure. 5{\rm)}. Then there exists a constant $C$ such that
\begin{align}
\label{4.26b}
|\varepsilon| &\leq |\a+\beta_1\mathbf{1}| +
C\Big(\sum_{App}|\a_i||\beta_1|+|\beta_1|+|\rho_B^{n+1}-\rho_B^n|+|m_B^{n+1}-m_B^n|\Big),
\end{align}
where $\mathbf{1}=(1,1,1)$.
\end{thm}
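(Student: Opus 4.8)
\noindent The plan is to rerun the diamond-region argument of Theorem~\ref{thm3.1}, now with the interior Riemann solver replaced by the generalized boundary-Riemann solver of Theorem~2.1 and with the extra bookkeeping the boundary forces (the ``$0$-waves'' $\a_0,\ \ve_0$ and the increments of $\r_B,\ m_B$). Following the generalized Goodman set-up of \cite{G1} and the mesh-curve conventions of \cite{CHS1}, the outgoing strengths $(\ve_0,\ve_2,\ve_3)$ are the resolution, by the generalized boundary-Riemann solver, of the jump $\{\widehat U_B^{n+1},U_R^{n+1}\}$, and this jump telescopes as
$$
U_R^{n+1}-\widehat U_B^{n+1}=(U_R^{n+1}-U_R^n)+(U_R^n-U_M^n)+(U_M^n-\widehat U_B^n)+(\widehat U_B^n-\widehat U_B^{n+1}).
$$
The first bracket is a source/perturbation correction of size $O(1)\D t$ by \eqref{1.12t} and \eqref{2.13.1}; the second is the incoming $1$-wave, $U_R^n-U_M^n=\b_1R_1(U_M^n)+O(\b_1^2)+(\widebar U_R-\widebar U_M)$; the third is resolved by the incoming boundary-Riemann solver into $(\a_0,\a_2,\a_3)$ plus its own $O(\D t)$ perturbation; and the last equals $-(\r_B^{n+1}-\r_B^n,\ m_B^{n+1}-m_B^n,\ 0)^T$, which produces the boundary-data terms in \eqref{4.26b}.

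Next I would Taylor-expand the boundary-Riemann solver about $U_M^n$ (equivalently about $\widetilde U_B^{n+1}$), along the lines of \eqref{3.7}--\eqref{3.18}, using: (i) the Lipschitz --- and, to first order, differentiable --- dependence of the $(\CMcal{E})$-selected boundary solution on its data; (ii) the explicit form \eqref{solver} of the contraction matrix $S$ together with $|\widebar U|=|(S-I_3)\widetilde U|=O(1)\D t$ from \eqref{2.13.1} and $|\widebar U(x_B,t)|=O(1)\D t$ from \eqref{2.14}, so that each ``$S-I_3$'' factor costs a $\D t$; and (iii) the global positivity of the gas velocity from Section~3.3, which gives $\l_2=u>0$, $\l_3=u+c>0$ --- the outgoing $2$- and $3$-waves genuinely leave the boundary strip --- while $\l_1=u-c<0$ near $x_B$ makes $\b_1$ a genuinely boundary-incoming wave that must reflect. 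The $\a$-part of the jump then passes to $\ve$ with coefficient $I_3+O(1)\D t$; the reflected part of $\b_1$ is spread over the $0$-, $2$- and $3$-families with coefficients fixed by the $(\r,m)$-boundary constraint and the eigenvector frame at $U_M^n$, each of modulus at most $1$, which is exactly what lets the linear part be packaged as $\a+\b_1\mathbf 1$; and the bilinear interaction of $\a$ with $\b_1$ produces $\sum_{App}|\a_i||\b_1|$ as in Glimm's interior estimate \cite{G}. Collecting the remaining terms --- the quadratic interaction, the $O(\D t)$ contributions of $\widebar U$ and $\widebar U_B$, the $O(\D t\,\D x)$ source contributions, and the boundary increments --- into the constant-times bracket gives \eqref{4.26b}.

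The main obstacle I expect is the boundary-Riemann solver itself. Unlike the interior Riemann problem, the classical boundary-Riemann problem is not singled out by the Rankine--Hugoniot and Lax conditions alone (Figure~1); the selection is made only through the least-total-variation condition $(\CMcal{E})$, so one must check that this selection depends Lipschitz-continuously, and differentiably to first order, on the data, and --- crucially --- that the $(\CMcal{E})$-solution never carries a $0$-speed $1$-shock attached to $x=x_B$, so that the reflection picture above is stable under the small perturbations involved. A second, more technical, difficulty is that the generalized boundary solution does not meet the prescribed data, $\widebar U_B(x_B,t)\neq 0$, so the $0$-wave $\a_0$ (of size $O(\D t)$) must be carried consistently through the reflection computation; showing that its interaction with $\b_1$, with the source, and with the boundary increment contributes only terms of order $\D t\,\D x$, $(\D t)^2$, or the already-allowed $\sum_{App}|\a_i||\b_1|$ is the heart of the generalized Goodman estimate invoked from \cite{G1}.
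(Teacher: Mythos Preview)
Your plan is sound but takes a different route from the paper. The paper's proof is geometric, in the style of Goodman \cite{G1}: it physically tracks the incoming $1$-wave $\beta_1$ as it crosses $\a_3,\a_2,\a_0$ in turn (the \emph{transmission} part, producing a penetrating wave $\beta_1'$ and intermediate states $U_R',U_1'$ with $|U_R'-U_R^n|\le C\sum_i|\a_i\beta_1|$), then resolves $U_1'$ against the boundary data $\widehat U_B^n$ to get reflected waves $\a_2',\a_3'$ with $|(\a_2',\a_3')|\le C|\beta_1|$ (the \emph{reflection} part, giving a further state $U_R''$). An auxiliary strength $\kappa=\Theta(U_R^{n+1};\widehat U_B^n)$ is then introduced so that $|\ve-\kappa|$ is controlled by the boundary-data increment alone via $d\Theta$, while $|\kappa-S(\a+\a')|$ is bounded by the transmission/reflection estimates together with $U_R^{n+1}=S_RU_R^n$ and the contraction factor $e^{h_B\tilde u_B\D t}\cosh(h_Bv_B\D t)$. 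Your telescoping-plus-Taylor scheme is the natural boundary analogue of Theorem~\ref{thm3.1} and can be made to work, but note two points. First, the differential of the boundary solver $\Theta$ in the $U_R$-slot is not $R^{-1}(U_M)$ but a modified frame encoding the reflection of the $1$-mode onto the $0$-, $2$-, $3$-modes; the paper avoids computing this derivative by using the geometric intermediate states and the interaction lemmas of \cite{CHS1}, whereas your route would need it explicitly. Second, your claim that the reflection coefficients have modulus at most $1$ (used to justify the $\a+\beta_1\mathbf 1$ packaging) is neither proved in the paper nor needed: the paper only gets $|(\a_2',\a_3')|\le C|\beta_1|$, and the $\a+\beta_1\mathbf 1$ form in \eqref{4.26b} is merely a convenient upper bound, with any slack absorbed by the $C|\beta_1|$ term already sitting in the error.
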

\begin{proof}
The boundary interaction estimate is calculated using the method reported in \cite{CHS2,G1}. The estimation is constructed using the following steps.
(1) Decompose the wave interaction into two parts, the transmission part and the reflection part, and evaluate the interacted wave strength of these parts. (2) Estimate the effect of the Riemann solver \eqref{solver} on the wave strength. (3) Combine the estimation of (1) and (2) to complete the proof.
Step (1): We divide the interaction of the waves into two parts, the transmission part and the reflection part. The incoming wave $\b_1$ interacts with $\a_3,\ \a_2$, and $\a_0$ in order; it generates one penetrative wave $\b_1'$ through the boundary, see Figure 6, and two reflected waves $\a_2'$ and $\a_3'$ from the boundary, see Figure 7. For the transmission part, the wave strength $\b_1'$ and states $U_1'$ and $U_R'$ can be determined through the interaction estimates of waves $\beta_1,\ \a_0,\ \a_2$, and $\a_3$. Furthermore, according to Lemma 4.2 (a) in \cite{CHS1} and the triangle inequality, we obtain
\begin{eqnarray}
\label{4.30b}
|U'_R-U_R^n|\leq C(|\a_0\beta_1|+|\a_2\beta_1|+|\a_3\b_1|).
\end{eqnarray}
It is evident that
\begin{eqnarray}
\label{4.31b}
|U'_1-\widehat{U}^n_B|\leq C_1|\beta_1'|\le C_2|\b_1|,
\end{eqnarray}
for some constants $C_1,C_2$.
\begin{center}
\includegraphics[height = 3.5cm, width=14cm]{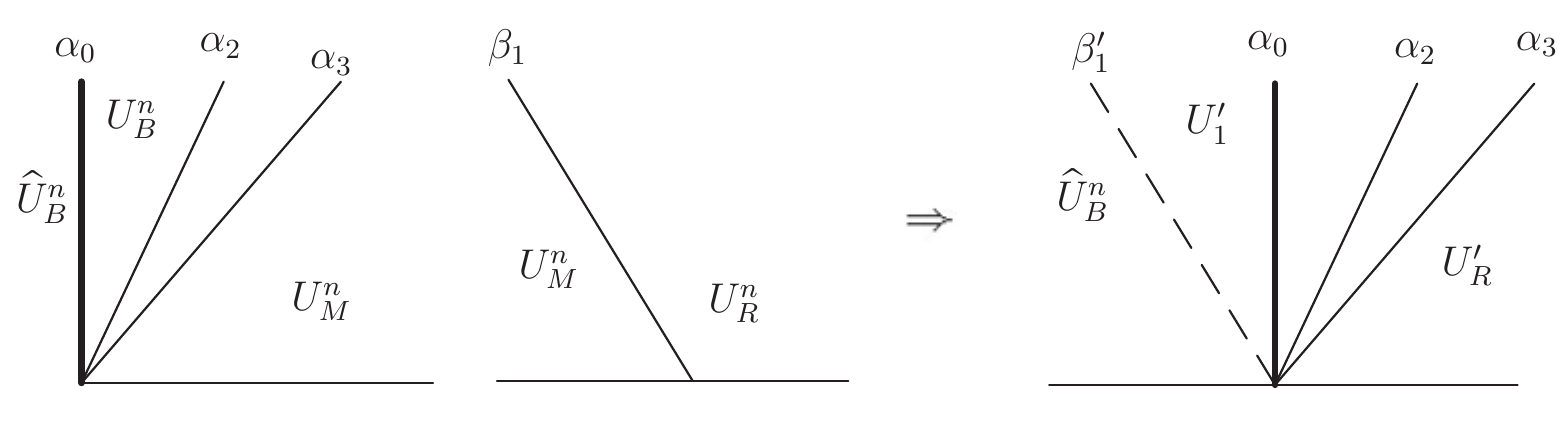}\\
Figure. 6: Interaction of waves for the transmission part $\ve=\a+\b+$ higher order term.\medskip
\end{center}
\begin{center}
\includegraphics[height = 3.5cm, width=13.5cm]{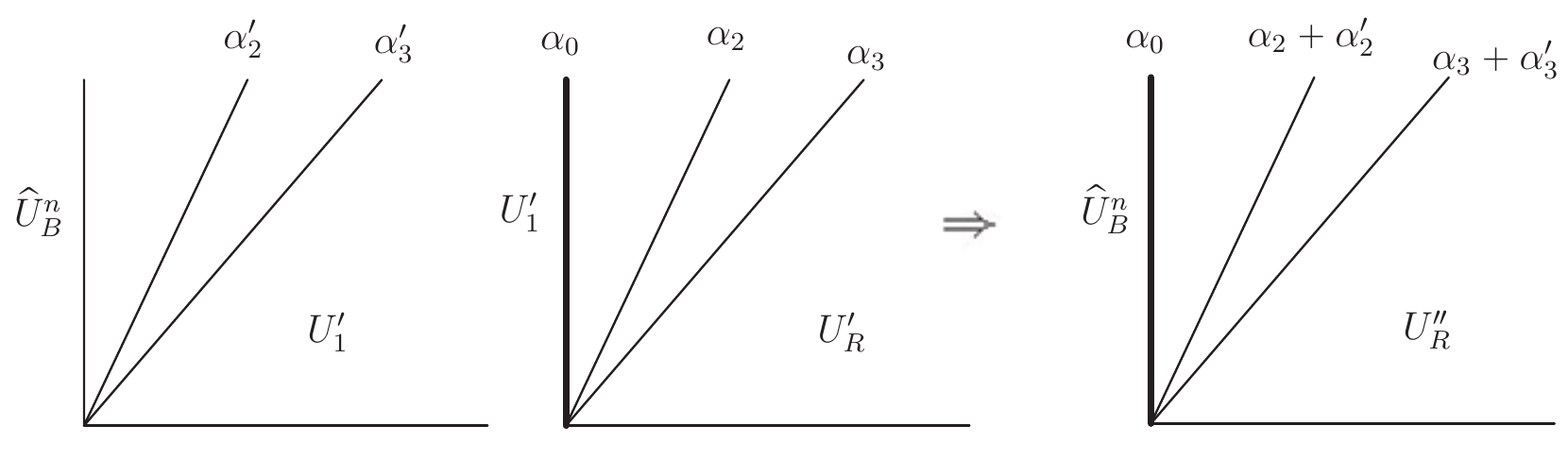}\\
Figure 7: Interaction of waves for the reflection part $\ve=\a+\a'+$ higher order term.\medskip
\end{center}
For the reflection part, the wave strengths $\a_2'$ and $\a_3'$ can be determined through the generalized boundary-Riemann problem $\CMcal{BR}_G(\widehat{U}_B^n;U_1')$. Therefore, we estimate the interaction of $\a_2'$ and $\a_3'$ and the waves $\a_2$ and $\a_3$. For any state $U=(\r,m,E)^T$, let us define
$$
|U|_{1,2}:=|\r|+|m|.
$$
According to \eqref{4.31b}, $\a_2'$ and $\a_3'$ satisfy
\begin{eqnarray}
\label{4.32b}
|(\a_2',\a_3')|\leq C|U_1'-\widehat{U}_B^n|_{1,2}\leq C|\beta_1|.
\end{eqnarray}
Therefore, according to Lemma 4.2 (a) and (b) in \cite{CHS1} and the triangle inequality, we obtain
\begin{align}
\label{4.33b}
|U''_R-U'_R|&\leq C(|\a_0\a_2'|+|\a_2\a_2'|+|\a_3\a_2'|+|\a_0\a_3'|+|\a_2\a_3'|+|\a_3\a_3'|)\leq C|(\a_2',\a_3')|,
\end{align}
where $U_R''$ is connected to $\widehat{U}_B'$ on the right by waves $\a_0,\ \a_2'+\a_2$, and $\a_3'+\a_3$. Wave strengths $\a_0$ and $\a_2$ are bounded; using this and \eqref{4.32b} and \eqref{4.33b}, we have
\begin{align}
\label{4.34b}
|U''_R-U'_R|\leq C|\beta_1|.
\end{align}
Let $(\widehat{U}^n_B, U^{n+1}_R):=[(\widehat{U}^n_B;U^{n+1}_R)/(\k_0,\k_2,\k_3)]$ be the approximate solution of $\mathcal{BR}_G(\widehat{U}^n_B;U^{n+1}_R)$, for some $\k_i,\ i=0,2,3$. According to the result in Section 2, there exists a smooth function $\Theta=(\Theta_0,\Theta_2,\Theta_3)$ connecting two constant states such that $(\varepsilon_0,\varepsilon_2,\ve_3)=\Theta(U^{n+1}_R;\widehat{U}^{n+1}_B)$ and $(\k_0,\k_2,\k_3)=\Theta(U^{n+1}_R;\widehat{U}^{n}_B)$. By \eqref{aproxsol4} and $|U^{n+1}_R-\widehat{U}^{n}_B|=O(1)|(\k_0,\k_2,\k_3)|$, we obtain
\begin{align}
\label{4.27d}
&\varepsilon_j-\k_j\nonumber\\
&= \int^1_0 \frac{d}{d\xi}\Theta_j(\widehat{U}^{n+1}_B+\xi(U^{n+1}_R-\widehat{U}^{n+1}_B);\widehat{U}^{n+1}_B) d \xi
   -\int^1_0 \frac{d}{d\xi}\Theta_j(\widehat{U}^{n}_B+\xi(U^{n+1}_R-\widehat{U}^{n}_B);\widehat{U}^{n}_B) d \xi\nonumber\\
&= \int^1_0 \{d\Theta_j(\widehat{U}^{n+1}_B+\xi(U^{n+1}_R-\widehat{U}^{n+1}_B);\widehat{U}^{n+1}_B)
   -d\Theta_j(\widehat{U}^{n}_B+\xi(U^{n+1}_R-\widehat{U}^{n}_B);\widehat{U}^{n}_B)\}\cdot(U^{n+1}_R-\widehat{U}^{n}_B)d\xi\nonumber\\
&\quad+\int_0^1d\Theta_j(\widehat{U}^{n+1}_B+\xi(U^{n+1}_R-\widehat{U}^{n+1}_B);\widehat{U}^{n+1}_B)
  \cdot(\widehat{U}^{n}_B-\widehat{U}^{n+1}_B) d \xi,\ j=0,2,3.
\end{align}
According to \eqref{4.27d},
\begin{align}
\label{4.28b}
|\varepsilon_j-\k_j|
&\le O(1)|(\k_0,\k_2,\k_3)|\big(|\rho^{n+1}_B-\rho^{n}_B|+|m^{n+1}_B-m^{n}_B|\big)+O(1)(|\rho^{n+1}_B-\rho^{n}_B|+|m^{n+1}_B-m^{n}_B|)\nonumber \\
&\leq C(|\rho^{n+1}_B-\rho^{n}_B|+|m^{n+1}_B-m^{n}_B|), \quad j=0,2,3.
\end{align}
Step (2): Let us denote $S_B:=S(x_B,t_n,\widehat{U}_B^n)=S(x_B,t_n,\widetilde{U}_B^n)$ and $S_R:=S(x_2,t_n,\widetilde{U}_R^n)$, where $S$ is in \eqref{aproxsol4}. According to \eqref{2.13.1}, \eqref{4.30b}, and \eqref{4.34b}, we obtain
\begin{align}
\label{4.35b}
&|\k^T-S(x_B,t_n,\widehat{U}_B^n)(\a+\a')^T|\nonumber\\
&\qquad\leq C\big|R^{-1}(\widetilde{U}_B^n)U_R^{n+1}-S_BR^{-1}(\widetilde{U}_B^n)U_R''\big|\nonumber\\
&\qquad\leq C(|U_R^{n+1}-S_RU_R^n|+|S_B||U_R-U_R'|+|S_B||U_R'-U_R''|)\nonumber\\
&\qquad\leq C(e^{h_B\tilde{u}_B^n\D t}\cosh(h_Bv_B\D t)|U_R-U_R'|+e^{h_B\tilde{u}_B^n\D t}\cosh(h_Bv_B\D t)|U_R'-U_R''|)\nonumber\\
&\qquad\leq Ce^{h_B\tilde{u}_B^n\D t}\cosh(h_Bv_B\D t)(|\a_0\beta_1|+|\a_2\beta_1|+|\a_3\b_1|+|\beta_1|),
\end{align}
where $\a_0':=0$, $h_B:=h(x_B)=-\frac{2}{x_B}$ and $v_B=v(x_B)$ as in \eqref{grav}. Finally, by \eqref{4.32b} and \eqref{4.35b}, we have
\begin{align}
\label{4.36b}
|\k^T-S(x_B,t_n,\widehat{U}_B^n)(\a+\b_1\mathbf{1})^T|
&\leq|\k-S(x_B,t_n,\widehat{U}_B^n)(\a+\a')^T|+e^{h_B\tilde{u}_B^n\D t}\cosh(h_Bv_B\D t)(|\a'|+|\beta_1|)\nonumber\\
&\leq Ce^{h_B\tilde{u}_B^n\D t}\cosh(h_Bv_B\D t)(|\a_0\beta_1|+|\a_2\beta_1|+|\beta_1|).
\end{align}
Step (3): Finally, according to \eqref{4.28b} and \eqref{4.36b},
\begin{align*}
&|\ve|\le|\ve-\k|+|\k^T-S(x_B,t_n,\widehat{U}_B^n)(\a+\b_1\mathbf{1})^T|+e^{h_B\tilde{u}_B^n\D t}\cosh(h_Bv_B\D t)|\a+\b_1|\nonumber\\
&\quad\leq |\a+\beta_1\mathbf{1}|+C\Big(\sum_{App}|\a_i||\beta_1|+|\beta_1|+|\rho_B^{n+1}-\rho_B^n|+|m_B^{n+1}-m_B^n|\Big).
\end{align*}
We complete the proof of the theorem.
\end{proof}

\noindent If $\beta_1$ is not the incoming wave of the boundary triangle region, then \eqref{4.26b} is reduced by
\begin{align*}
|\varepsilon| &\leq |\a|+C\big(|\rho_B^{n+1}-\rho_B^n|+|m_B^{n+1}-m_B^n|\big).
\end{align*}

\subsection{The stability of the generalized Glimm scheme}

In this subsection, we prove the nonincreasing of the Glimm functional and provide the estimate of the total variations of the perturbations, which lead to the compactness of subsequences of the approximate solutions for \eqref{IBVP}.
Let $U_{\theta, \Delta x}$ denote the approximate solution for \eqref{IBVP} by the generalized Glimm scheme described in Section
3.1; $U_{\theta, \Delta x}$ can be decomposed as
\begin{equation*}
U_{\theta, \Delta x}=\widetilde{U}_{\theta, \Delta x}+\widebar{U}_{\theta, \Delta x},
\end{equation*}
where $\widetilde{U}_{\theta, \Delta x}$ is the approximate
solution obtained by solving homogeneous conservation laws in each time
step and $\widebar{U}_{\theta, \Delta x}$ is the perturbation
term; $\widetilde{U}_{\theta, \Delta x}$ and
its total variation are uniformly bounded. According to the results of
\cite{G,S2}, it can be accomplished that the
Glimm functional is nonincreasing in time.

Let $J$ be a mesh curve, $J'$ be the immediate successor of $J$, and $\Gamma_{k,n}$ be the diamond region enclosed by $J$ and $J'$, centered at $(x_{2k},t_n)$. The Glimm functional $F$ for $\widetilde{U}_{\theta ,\Delta x}$
over $J$ is defined as
\begin{equation}
\label{glimfunl}
F(J):=L(J)+KQ(J),
\end{equation}
where $K$ is a sufficiently large constant, which will be determined later, and
\begin{align*}
L(J)&:=\sum  \{ |\a_i| : \a_i \  \mbox{crosses} \ J \}+K_1\Big(|\beta_1|+\sum_{k\in B(J)}l_B^k
 \Big),\\
Q(J)&:= \sum \{ |\a_i||\a_{i'}|: \a_i,\a_{i'} \ \mbox{cross} \  J \ \mbox{and approach} \},\\
l_B^n&:=|\rho_B^{n+1}-\rho_B^n|+|m_B^{n+1}-m_B^n|.
\end{align*}
Here, both constants $K>1$ and $K_1>1$ will be determined later, $B(J):=\{n:P_{x_B,n}=(x_B,t_n+\frac{\Delta t}{2})\in J\}$, $l_b^n$ is evaluated at the mesh point $P_{x_B,n}$, and the presence of $|\beta_1|$ is because $\beta_1$ crosses $J$ and locates in some boundary triangle region (see Figure 5).\\

We first consider the case that $J$ and $J'$ differ in the diamond region away from the boundary.
According to Theorem 3.1., let $Q(\Gamma_{k,n}):=D(\a,\beta)$ be the wave interaction potential associated with $\a$ and $\beta$ and let
$$
C_1:=\max\limits_{U\in\Omega}\Big|\sum\limits_{j<i}L_{ij}(U)\Big|\ge C'_{k,n},\ \forall\,k,n.
$$
By the condition $(A_2)$, $u_0(x)>0,\ \forall\,x\in[x_B,\infty)$. According to \eqref{3.6}, we have the following inequalities
 \begin{align}
\label{3.32}
L(J')-L(J)&\leq C_1Q(\Gamma_{k,n})-\l_*^{-1}\z_{k,n}(|\a|+|\beta|)\D x
+\l_*^{-1}C''_{k,n}(\Delta x)^2+O(1)(\Delta x)^3,\\
\label{3.33}
Q(J')-Q(J)&\leq -Q(\Gamma_{k,n})+L(J)\big(C_1Q(\Gamma_{k,n})-\l_*^{-1}\z_{k,n}(|\a|+|\beta|)\Delta{x}\nonumber\\
&\quad+\l_*^{-1}C''_{k,n}(\Delta x)^2+O(1)(\Delta x)^3\big),
\end{align}
where $C''_{k,n},\z_{k,n}$ are in \eqref{3.6} of Theorem \ref{thm3.1}.
By \eqref{glimfunl}, \eqref{3.32}, and \eqref{3.33},
\begin{align}
\label{3.34}
F(J')-F(J)&\leq -\big(K-C_1-KC_1L(J)\big)Q(\Gamma_{k,n})-\l_*^{-1}\z_{k,n}(|\a|+|\beta|)\D x\nonumber\\
&\quad+\big(1+KL(J)\big)\l_*^{-1}C''_{k,n}(\Delta x)^2+O(1)(\Delta x)^3.
\end{align}
If $K$ satisfies $2C_1<K\le\frac{\e}{L(J)}$ for some $0<\e<\frac{1}{2}$, then
\begin{align}
\label{3.38.c}
F(J)=L(J)+KQ(J)\leq L(J) + K L^2(J)\leq(1+\e)L(J).
\end{align}
Coupling \eqref{3.38.c} with \eqref{3.34}, we obtain the estimate
\begin{align}
\label{3.36}
F(J')&<F(J)-\l_*^{-1}\z_{k,n}(|\a|+|\beta|)\D x+\l_*^{-1}(1+\e)C''_{k,n}(\Delta x)^2+O(1)(\Delta x)^3.
\end{align}
Now, let $J_n$, $n=1,2,\ldots$, denote the mesh curves that contain all mesh points
$(x_k+\theta_{n-1}\Delta x,t_{n-1})$ at time $t=t_{n-1}$; therefore, $J_{n}$ is located on the time strip $T_n:=(x_B,\infty)\times[t_{n-1},t_n)$. We select the positive number $K$ such that
\begin{equation}
\label{3.35}
2C_1<K\le\frac{\e}{L(J_1)+\mathcal{C}}=\frac{\e}{\TV\{U_0\}+\mathcal{C}},
\end{equation}
where $\mathcal{C}$ will be determined later. Then, $2C_1<K\le\frac{\e}{L(J_1)}$.
By \eqref{3.38.c} and adding up recursive relation \eqref{3.36} over all $k$,
and using $\int_{x_B}^{\infty}\frac{dx}{x^2}=\frac{1}{x_B}$ and $q\in W^{1,1}[x_B,\infty)$, we obtain
\begin{align*}
F(J_{2})&< F(J_1)-\l_*^{-1}C_2F(J_1)\Delta x+\l_*^{-1}C_3\Delta x+O(1)(\Delta x)^2,
\end{align*}
where
\begin{equation*}
C_2=\frac{3-\g}{x_B}\cdot\min_{t\ge 0}\{u_B(t)\},\quad\ C_3=\sqrt{3}\cdot\max_{U\in\Omega}\Big\{\frac{m}{x_B}+\frac{2\r v_B^2}{x_Bc_*}+\frac{(\g-1)\|q'\|_{L^1[x_B,\infty)}}{c_*^2}\Big\}.
\end{equation*}
Therefore, if $\D x$ sufficiently small, we have
\begin{align*}
L(J_2)\leq F(J_{2})&<\Big(1-\l_*^{-1}\frac{C_2}{1+\e}\Delta x\Big)F(J_1)+\l_*^{-1}(1+\e)C_3\Delta x+O(1)(\Delta x)^2 \nonumber\\
      &< F(J_1)+(1+\e)^2\mathcal{C}+O(1)(\D x)^2 \nonumber\\
      &\le (1+\e)L(J_1)+(1+\e)^2\mathcal{C}+O(1)(\D x)^2,
\end{align*}
where
\begin{equation}
\label{const}
\mathcal{C}:=\frac{C_3}{C_2}
=\frac{\sqrt{3}(1+\e)^2}{(3-\g)\min\limits_{t\ge 0}\{u_B(t)\}}\cdot\max\limits_{U\in\Omega}\Big\{m+\frac{\r GM_p}{x_Bc_*}+\frac{(\g-1)x_B\|q'\|_{L^1[x_B,\infty)}}{c_*^2}\Big\},
\end{equation}
and $c_*$ is given in \eqref{minc}.
Define $m_*:=\min\limits_{t\ge 0}\{m_B(t)\}$. By the condition ($A_2$),
\begin{align*}
m(x,t_2)\geq m_B(t_2)-L(J_2)\geq m_*-(1+\e)L(J_1)-(1+\e)^2\mathcal{C}>0.
\end{align*}
This implies that $u_{\theta,\D x}(x,t_2)>0,\ \forall\,x\in(x_B,\infty)$.
Moreover, $L(J_2)\le(1+\e)L(J_1)+(1+\e)^2\mathcal{C}$ and
$$
KL(J_2)\le K\big(L(J_1)+(1+\e)^2\mathcal{C}\big)+\e KL(J_1)+O(1)(\D x)^2\le\e(1+\e).
$$
Therefore, $-K+C_1+KC_1L(J_2)<-(1-\e-\e^2)C_1<0$. According to \eqref{3.6} and \eqref{glimfunl}
and the similarly argument in the previous step, we further obtain
\begin{align*}
L(J_3)&\leq F(J_{3})\leq\Big(1-\l_*^{-1}\frac{C_2}{1+\e+\e^2}\D x\Big)F(J_2)+\l_*^{-1}(1+\e+\e^2)C_3\D x+O(1)(\Delta x)^2\nonumber\\
&\le \Big(1-\l_*^{-1}\frac{C_2}{1+\e+\e^2}\D x\Big)^2F(J_1)\nonumber\\
&\quad+\l_*^{-1}(1+\e+\e^2)C_3\D x\Big(1+\Big(1-\l_*^{-1}\frac{C_2}{1+\e+\e^2}\D x\Big)\Big)+O(1)(\Delta x)^2\nonumber\\
&\le F(J_1)+(1+\e+\e^2)^2\mathcal{C}+O(1)(\D x)^2\nonumber \\
&\le (1+\e)L(J_1)+(1+\e+\e^2)^2\mathcal{C}+O(1)(\D x)^2,
\end{align*}
and
\begin{align*}
m_{\theta,\D x}(x,t_3)\geq m_B(t_3)-L(J_3)\geq m_*-(1+\e)L(J_1)-(1+\e+\e^2)^2\mathcal{C}>0,
\end{align*}
which implies $u_{\theta,\D x}(x,t_3)>0,\ \forall\,x\in[x_B,\infty)$. Based on the selection of the constants $K$ and $\e$ in \eqref{3.35}, it can be verified that
\begin{align*}
KL(J_3)&\le K\big((1+\e)L(J_1)+(1+\e)^2\mathcal{C}\big)+\e KL(J_1)+\e^2(2+2\e+\e^2)K\mathcal{C}+O(1)(\D x)^2, \\
       &\le\e+\e K\big(L(J_1)+(1+\e)^2\mathcal{C}\big)+O(1)(\D x)^2, \\
       &\le\e(1+\e).
\end{align*}
Continue this process and by using induction, if \eqref{3.35} holds true for all $J$ with $J_k\leq J<J_{k+1}$, $k=1,\ldots,n-1$, it yields
\begin{align*}
F(J_{n})&\le\Big(1-\l_*^{-1}\frac{C_2}{1+\e+\e^2}\D x\Big)F(J_{n-1})+\l_*^{-1}(1+\e+\e^2)C_3\D x+O(1)(\Delta x)^2\nonumber\\
&\leq \Big(1-\l_*^{-1}\frac{C_2}{1+\e+\e^2}\D x\Big)^{n-1}F(J_1)\nonumber\\
&\quad+\l_*^{-1}(1+\e+\e^2)C_3\D x\sum_{k=1}^{n-1}\Big(1-\l_*^{-1}\frac{C_2}{1+\e+\e^2}\D x\Big)^{k-1}+O(1)(\Delta x)^2,
\end{align*}
and therefore,
\begin{align}
\label{3.42}
F(J_{n})&\le F(J_1)+(1+\e+\e^2)^2\mathcal{C}+O(1)(\Delta x)^2\nonumber\\
&\leq (1+\e)\TV\{U_0\}+(1+\e+\e^2)^2\mathcal{C}+O(1)(\Delta x)^2, \\
m_{\theta,\D x}(x,t_n)&\geq m_B(t_n)-L(J_n)\geq m_*-(1+\e)L(J_1)-(1+\e+\e^2)^2\mathcal{C}>0.\nonumber
\end{align}
In particular, $u_{\theta,\D x}(x,t_n)>0,\ \forall\,x\in[x_B,\infty)$.
Therefore, the inequality \eqref{3.42} leads to
\begin{align}
\label{3.43}
\TV_{J}\{\widetilde{U}_{\theta,\Delta x}\}&\leq O(1)L(J)\leq O(1)F(J)\nonumber\\
&\leq (1+\e)\TV\{U_0(x)\}+(1+\e+\e^2)^2\mathcal{C}+O(1)(\Delta x)^2
\end{align}
for $J_k\leq J<J_{k+1}$, $k=1,\ldots,n-1$.

Next, we consider the case that $J'$ is an immediate successor of $J$ so that they only
differ on boundary $P_{x_B,{n}}$.
According to the conditions ($A_2$), \eqref{4.26b}, and \eqref{glimfunl}, we obtain
\begin{align*}
F(J')-F(J)
&=|\varepsilon|-|\a|-|\beta_1|-K_1(|\beta_1|+l_B^k)\nonumber\\
&\quad+K|\a|(|\varepsilon|-|\a|-|\beta_1|)-K(|\a_0\beta_1|+|\a_2\beta_1|+|\a_3\b_1|),\nonumber\\
&\leq O(1)C(|\a_0\beta_1|+|\a_2\beta_1|+|\a_3\beta_1|+|\beta_1|+l_B^k)-K_1(|\beta_1|+l_B^k)\nonumber\\
&\quad+O(1)CK|\a|(|\a_0\beta_1|+|\a_2\beta_1|+|\a_3\beta_1|+|\beta_1|+l^k_B)\nonumber\\
&\quad-K(|\a_0\beta_1|+|\a_2\beta_1|+|\a_3\beta_1|)+O(\Delta x)^2\nonumber\\
&\leq (-K_1+O(1)C+O(1)CK\cdot F(J))(|\beta_1|+l_B^k)\nonumber\\
&\quad+(-K+O(1)C+O(1)CK\cdot F(J))(|\a_0||\beta_1|+|\a_2||\beta_1|+|\a_3||\beta_1|)\nonumber\\
&\quad+O(1)(\Delta x)^2\leq O(1)(\Delta x)^2
\end{align*}
provided that constants $K_1$, $K\geq 2C_1$, and $ KL(J)\leq\e$. Now, let $J_n$ be the mesh curve located on the time strip $T_n:=(x_B,\infty)\times[t_{n-1},t_n)$ and include the half-ray $\{x=x_B,\;t\geq t_n+\frac{\Delta t}{2}\}$. Moreover, let
$\TV\{U_0(x)\}:=\TV\{\r_0(x)\}+\TV\{m_0(x)\}+\TV\{E_0(x)\}$. If $\Delta x$ and $\TV\{U_0(x)\}$ are sufficiently small,
then we have
\begin{align}
\label{4.41b}
F(J_{k+1})\le F(J_{k})-\l_*^{-1}\frac{C_2}{1+\e+\e^2}(\D x)F(J_k)+O(1)(\Delta x)^2,\quad k=1,...,n.
\end{align}
Based on \eqref{4.41b} and the analysis similar to that in the interior wave interaction, we show that the Glimm functional $F$ is nonincreasing in time. Therefore, $\widetilde{U}_{\theta,\Delta x}$ is defined for $t > 0$ and $\Delta x \rightarrow 0$.

Next, we verify that the total variation of the perturbation
is bounded in any time step.
Let us denote $S_k:=S(x_k,t,\widetilde{U}_{\th,\D x}(x_k,t))$, where $S$ is given in \eqref{solver}. Then,
\begin{equation*}
\begin{split}
\TV\{\widebar{U}_{\theta,\Delta x}\}
&=\sum_k|\widebar{U}_{\theta,\D x}(x_{k+1})
  -\widebar{U}_{\theta,\D x}(x_{k-1})|\le\sum_k|\big((S-I_3)\widetilde{U}_{\theta,\D x}\big)(x_{k+1})
  -\big((S-I_3)\widetilde{U}_{\theta,\Delta x}\big)(x_{k-1})| \\
&\le\sum_k|(S_z-I_3)(\widetilde{U}_{\th,\D x}(x_k+1)-\widetilde{U}_{\th,\D x}(x_{k-1})| \\
&\quad+\sum_k|(S_{k+1}-S_z)\widetilde{U}_{\th,\D x}(x_{k+1})+(S_z-S_{k-1})\widetilde{U}_{\th,\D x}(x_{k-1})|
\end{split}
\end{equation*}
According to \eqref{3.17-2} and the definition of $h(x)=-\frac{2}{x}$, we obtain
\begin{align}
\label{3.45}
\TV\{\widebar{U}_{\th,\D x}\}
&\le\|S_z-I_3\|\sum_k\osc\{\widetilde{U}_{\th,\D x}\}+2\|W\|\sum_k\frac{1}{x_k^2}\D x\D t+\|\Psi\|\sum_k\osc\{\widetilde{U}_{\th,\D x}\} \nonumber\\
&\le\|S_z-I_3\|\TV\{\widetilde{U}_{\th,\D x}\}+\frac{2\|W\|}{x_B}\D t+\|\Psi\|\TV\{\widetilde{U}_{\th,\D x}\}.
\end{align}
Since $\TV\{\widetilde{U}_{\th,\D x}\}$ is finite, the total variation of $\widebar{U}_{\th,\D x}$ is bounded.
Because of the boundedness of the total variation of approximate solutions,
 the constant $\mathcal{C}$ in \eqref{const} can be easily determined by the initial and boundary data, heat profile, and gravity. By \eqref{3.43}, \eqref{3.45}, and the results in \cite{DH,S2}, the following theorem
 is achieved.

\begin{thm}
Let $K,\ \e$ be as chosen in \eqref{3.35}, and let $U_{\theta,\Delta x}$ be an approximate solution of \eqref{IBVP}{\rm} based on the generalized Glimm scheme. Then, under the condition ${\rm(}A_1{\rm)}\sim{\rm(}A_3{\rm)}$,
for any given constant state $\widecheck{U}$, there exists a positive constant $d$, depending on the radius $r$ of $\Omega$, such that if
\begin{align*}
\sup_{x \in [x_B,\infty)}|U_0(x)-\widecheck{U}|\leq\frac{r}{2}, \quad\TV\{U_0(x)\}\leq d,
\end{align*}
and the condition
\begin{align*}
\sup_{t\in{\mathbb{R}^+}}|m_B(t)-\widecheck{m}|\leq\frac{r}{2}+(1+\e+\e^2)^2\mathcal{C}
\end{align*}
hold true for \eqref{IBVP} with the constant $\mathcal{C}$ in \eqref{const},
then $U_{\theta,\Delta x}(x,t)$ is well-defined for $t \geq 0$ and $\Delta x>0$ is sufficiently small.
Furthermore, $U_{\th,\D x}(x,t)$ has a uniform bound on the total variation and satisfies the following properties:
\begin{enumerate}
\item [{\rm(}i{\rm)}] $\displaystyle \|U_{\theta,\Delta{x}}-\widecheck{U}\|_{L^\infty}\leq r+(1+\e+\e^2)^2\mathcal{C}$.
\item [{\rm(}ii{\rm)}] $\TV\{U_{\theta,\Delta{x}}(\cdot,t)\}\leq \dfrac{r}{2}+(1+\e+\e^2)^2\mathcal{C}$.
\item [{\rm(}iii{\rm)}] $\displaystyle\int_{x_B}^{\infty}|U_{\theta,\Delta{x}}(x,t_2)-U_{\theta,\Delta{x}}(x,t_1)|dx\leq
O(1)(|t_2-t_1|+\Delta{t})$.
\item [{\rm(}iv{\rm)}] The velocity $u_{\th,\D x}(x,t)>0,\ \forall\,(x,t)\in\Pi$.
\item [{\rm(}v{\rm)}] The density $\r_{\th,\D x}(x,t)\ge\varrho,\ \forall\,(x,t)\in\Pi$, where $\varrho$ is the constant in {\rm(}$A_1${\rm)}.
\end{enumerate}
\end{thm}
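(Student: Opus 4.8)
The statement collects, into a single package of uniform estimates, everything prepared in Section~2 and Section~3, so the plan is to set up one induction over the time strips $T_n$ and then read off (i)--(v). First I would fix $\e\in(0,\tfrac12)$ and freeze $K,K_1$ via \eqref{3.35}, noting that $\mathcal{C}$ in \eqref{const} is a genuine constant once the data are given (it is an explicit function of $\r_B,m_B,q,GM_p$ and of $c_*,x_B$). Given the reference state $\widecheck U$, I would then choose $d>0$ small, depending only on $r$, on $\Omega$, on $\l_*$, and on $C_1,C_2,C_3,\mathcal{C}$, so that three requirements hold throughout the induction: the smallness conditions $KL(J)\le\e$, $K,K_1\ge 2C_1$, and the absorption hypotheses needed in Theorems~3.1 and~3.2 are in force whenever $L(J)=O(1)d$; the accumulated $L^\infty$ bound $\tfrac r2+d+(1+\e+\e^2)^2\mathcal{C}$ keeps every state $U^n_k$ inside $\Omega$, where Theorem~2.1 and the solver \eqref{aproxsol4}--\eqref{solver} apply; and the right-hand side of \eqref{3.42} stays strictly below the positivity threshold of ${\rm(}A_2{\rm)}$.

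The core is the induction step: assuming $U_{\theta,\D x}$ has been built on $T_1\cup\dots\cup T_n$ with $F(J_k)$ obeying \eqref{3.42} for $k\le n$, I would run the diamond/triangle/pentagon wave-interaction bookkeeping exactly as in \eqref{3.32}--\eqref{4.41b}. In each interior diamond the estimate \eqref{3.6} produces the decay term $-\l_*^{-1}\z_{k,n}(|\a|+|\b|)\D x$ which, after summation over a strip, dominates the source error $\l_*^{-1}C''_{k,n}(\D x)^2$ (here one uses $\int_{x_B}^{\infty}x^{-2}\,dx=x_B^{-1}<\infty$ together with ${\rm(}A_3{\rm)}$), while the boundary estimate \eqref{4.26b} contributes only the controllable terms $|\b_1|$ and $l_B^k$; this yields \eqref{4.41b}, hence \eqref{3.42}--\eqref{3.43} at level $n+1$. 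The uniform bound \eqref{3.43} on $\TV\{\widetilde U_{\theta,\D x}\}$ then feeds \eqref{3.45}, where $\|S_z-I_3\|=O(\D t)$ and $\|\Psi\|=O((\D t)^2)$, so that $\TV\{\widebar U_{\theta,\D x}\}=O(\D t)$ and $U_{\theta,\D x}=\widetilde U_{\theta,\D x}+\widebar U_{\theta,\D x}$ stays in $\Omega$; the construction therefore extends to $T_{n+1}$. By induction $U_{\theta,\D x}$ is well-defined for all $t\ge0$ and all sufficiently small $\D x$, \eqref{3.43}$+$\eqref{3.45} give the uniform total-variation bound and item (ii), and the parallel $L^\infty$ estimate (solution values lie in the range of their Riemann fans up to $O(\text{strengths}^2)$, chained from the inflow data and the source growth $(1+\e+\e^2)^2\mathcal{C}$) gives item (i).

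Items (iii)--(v) then come out of the same machinery. Item (iv) is the chain of inequalities following \eqref{3.36}: along each $J_n$ one has $m_{\theta,\D x}(x,t_n)\ge m_B(t_n)-L(J_n)-O(\D t)\ge \min_{t\ge0}m_B(t)-(1+\e)\TV\{U_0\}-(1+\e+\e^2)^2\mathcal{C}>0$ by ${\rm(}A_2{\rm)}$, so $u_{\theta,\D x}>0$ on all of $\Pi$. Item (v) follows from the analogous argument for the mass equation in \eqref{3x3system}: since $u_{\theta,\D x}>0$, the boundary $x=x_B$ is an inflow boundary, so $\r$ at any point is carried (up to the Glimm sampling error and an $O(\TV)$ correction across the elementary waves of the intervening Riemann fans) from inflow data that are $\ge\varrho$ by ${\rm(}A_1{\rm)}$, and $d$ is chosen small enough that these corrections stay below $\inf\r_0-\varrho$ and $\inf\r_B-\varrho$. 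Item (iii) is the classical $L^1$-in-$x$, Lipschitz-in-$t$ estimate: integrating \eqref{3x3system} over $[x_B,X]\times[t_1,t_2]$, the flux terms are $O(1)$ on the compact set $\Omega$, the source integrates to $O(1)|t_2-t_1|$ because $\int|h|\,dx<\infty$ and $g$ is bounded, the Glimm resampling between strips contributes the extra $O(1)\D t$, and the CFL condition \eqref{CFL} makes the bound independent of $X$; cf.~\cite{DH,S2}.

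The step I expect to be the main obstacle is the self-consistency of the constant selection, rather than any single estimate. Because $K$ is frozen once via \eqref{3.35} — which already encodes $\mathcal{C}$, hence $\|q'\|_{L^1}$ and gravity — but must still satisfy $KL(J_n)\le\e(1+\e)$ for every $n$ even though $L(J_n)$ absorbs the additive source contribution $(1+\e+\e^2)^2\mathcal{C}$ on each strip, one has to thread the geometric factors $(1+\e)$ and $(1+\e+\e^2)$ through \eqref{3.38.c}--\eqref{3.42} and verify that the induction closes simultaneously with (a) all states remaining in $\Omega$ and (b) the right-hand side of \eqref{3.42} staying under the ${\rm(}A_2{\rm)}$ threshold. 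A secondary subtlety is item (v), since the generalized Glimm scheme does not obey an exact maximum principle for $\r$, so the density lower bound must genuinely be read off from the inflow structure plus the uniform total-variation control. Once that bookkeeping is in place, the rest is a direct appeal to Theorems~2.1, 3.1, and~3.2 and to the displayed estimates of Section~3.3.
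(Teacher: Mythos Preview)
Your proposal is correct and follows essentially the same route as the paper: the induction over time strips and the bookkeeping \eqref{3.32}--\eqref{4.41b} leading to \eqref{3.42}--\eqref{3.43} are exactly what the paper does in the paragraphs preceding the theorem, and the proof itself is largely a matter of reading off (i)--(ii) from those estimates (the paper fixes $d$ by $(1+\e)d\le r/2$ and then uses $\sup|U-\widecheck U|\le\sup|U_0-\widecheck U|+\TV$). The only noteworthy difference is (iii): the paper tracks the domain of dependence through the solver, writing $U_{\theta,\D x}(x,t_2)-U_{\theta,\D x}(x,t_1)$ as a spatial displacement term plus $(S^\ell-I)\widetilde U$ and then invoking Corollary~19.8 of \cite{S2}, whereas you integrate \eqref{3x3system} over a slab; both arguments are standard and give the same bound. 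Your identification of the constant-threading (closing $KL(J_n)\le\e(1+\e)$ simultaneously with the $\Omega$-confinement and the $(A_2)$ threshold) as the delicate point is accurate, and the paper handles it by the explicit chain of inequalities you cite.
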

\begin{proof}
We prove (ii) first. Choose a fixed $d$ such that $(1+\e)d\le\frac{r}{2}$. According to \eqref{3.43}, if
\begin{equation}
\label{3.44_a}
\sup|U_0(x)-\widecheck{U}|\leq \frac{r}{2},\quad\TV\{U_0(x)\}\leq d,
\end{equation}
then, for a sufficiently small $\Delta x>0$,
$$
\TV_{J_n}\{U_{\th,\D x}\}\le(1+\e)\TV\{U_0(x)\}+(1+\e+\e^2)^2\mathcal{C}\le\frac{r}{2}+(1+\e+\e^2)^2\mathcal{C},
$$
where the constant $\mathcal{C}$ is defined in \eqref{const}. Next, for (i),
\begin{align}
\label{3.44}
\sup_{J_n}|U_{\theta,\Delta x}-\widecheck{U}|\leq \sup |U_0(x)-\widecheck{U}|+\TV_{J_n}\{U_{\theta,\Delta x}\}.
\end{align}
By \eqref{3.44_a} and \eqref{3.44},
\begin{align*}
\sup_{\scriptstyle J_n}|\widetilde{U}_{\th,\Delta x}-\widecheck{U}|\leq r+(1+\e+\e^2)^2\mathcal{C}.
\end{align*}
By using the aforementioned constants $K$ and $d$, we obtain that, for a sufficiently small $\Delta x$, $U_{\th,\Delta x}(x,t)$ is defined on $\Pi$ when the condition ${\rm(}A_1{\rm)}\sim{\rm(}A_3{\rm)}$ hold true. In addition, $U_{\th,\Delta x}(x,t)$ and its total variation are uniformly bounded and independent of $\Delta x$.

For (iii), without loss of generality, let $t_2>t_1$, $t_0=\sup\{t\le t_1\mid t=n\D t\text{ for some }n\}$, and let $\ell=\lfloor\frac{t_2-t_0}{\D t}\rfloor+1$. According to \eqref{2.13.1},
\begin{align*}
|U_{\th,\D x}(x,t_2)-U_{\th,\D x}(x,t_1)|
&\le |U_{\th,\D x}(y,t_0)-U_{\th,\D x}(x,t_0)|+|(S(y,\D t,\widetilde{U}_{\th,\D x}(y,t_0))^{\ell}-I)\widetilde{U}_{\th,\D x}(y,t_0)| \\
&=|U_{\th,\D x}(y,t_0)-U_{\th,\D x}(x,t_0)|+O(1)(\D t)
\end{align*}
for some $y\in[x-\ell\D x,x+\ell\D x]$. Therefore, according to the Corollary 19.8 in \cite{S2},  the result (iii)
is obtained immediately.
\end{proof}

According to Theorem 3.4. and Oleinik's analysis in \cite{S2},
the following theorem for the compactness of the subsequence
of $\{U_{\theta,\Delta x}\}$ holds true.

\begin{thm}
Assume that the condition ${\rm(}A_1{\rm)}\sim{\rm(}A_3{\rm)}$ hold true. Let $\{U_{\theta ,\Delta x}\}$ be a family of approximate solutions \eqref{IBVP} obtained using the generalized Glimm scheme. Then, there exist a subsequence
$\{U_{\theta ,\Delta x_i}\}$ of $\{U_{\theta ,\Delta x}\}$ and measurable function $U$ such that
\begin{enumerate}
\item [{\rm(}i{\rm)}] $U_{\theta ,\Delta x_i}(x,t)\rightarrow U(x,t)$ in $L^1_{loc}$ as
$\Delta x_i\rightarrow 0$;
\item [{\rm(}ii{\rm)}] for any continuous function $f$, we have $f(x,t,U_{\theta,\Delta x_i})\rightarrow f(x,t,U)$
in $L^1_{loc}$ as $\Delta x_i \rightarrow 0$.
\end{enumerate}
\end{thm}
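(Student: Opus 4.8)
The plan is to deduce the theorem from the uniform estimates of Theorem 3.4 by a standard Helly-type selection argument combined with a diagonal procedure in the time variable, following Oleinik's compactness framework as presented in \cite{S2} (see Corollary 19.8 there).

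First I would fix an enumeration $\{t_j\}_{j=1}^{\infty}$ of the nonnegative rationals. For each fixed $t_j$, Theorem 3.4 (i)--(ii) shows that $\{U_{\th,\D x}(\cdot,t_j)\}_{\D x>0}$ is uniformly bounded in $L^{\infty}$ and has uniformly bounded total variation on $[x_B,\infty)$. By Helly's selection theorem (applied on each bounded interval $[x_B,x_B+N]$ and then diagonalizing over $N$), there is a subsequence of $\D x$'s along which $U_{\th,\D x}(\cdot,t_j)$ converges pointwise a.e., hence in $L^1_{loc}$, to some function $U(\cdot,t_j)$ of bounded variation. Performing this selection successively for $t_1,t_2,\dots$ and passing to a diagonal subsequence $\{\D x_i\}$, I obtain a single subsequence along which $U_{\th,\D x_i}(\cdot,t_j)\to U(\cdot,t_j)$ in $L^1_{loc}$ for every rational $t_j$, with $\TV\{U(\cdot,t_j)\}$ and $\|U(\cdot,t_j)\|_{L^\infty}$ bounded by the constants in Theorem 3.4.

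Next I would promote this to convergence at every time and in space-time. Given $t\ge 0$ and a rational $t_j$ with $|t-t_j|$ small, Theorem 3.4 (iii) bounds $\int_{x_B}^{x_B+N}|U_{\th,\D x_i}(x,t)-U_{\th,\D x_i}(x,t_j)|\,dx$ by $O(1)(|t-t_j|+\D t)$ uniformly in $i$; combining this with the convergence already established at $t_j$ shows that $\{U_{\th,\D x_i}(\cdot,t)\}_i$ is Cauchy in $L^1_{loc}([x_B,\infty))$, which defines $U(\cdot,t)$ and gives $U_{\th,\D x_i}(\cdot,t)\to U(\cdot,t)$ in $L^1_{loc}$ for all $t$. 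Integrating over bounded time intervals and invoking the dominated convergence theorem (using the uniform $L^\infty$ bound) then yields $U_{\th,\D x_i}\to U$ in $L^1_{loc}(\Pi)$, which is conclusion (i); the limit $U$ inherits the uniform $L^\infty$ and $\TV$ bounds. For conclusion (ii), I would pass to a further subsequence along which $U_{\th,\D x_i}\to U$ pointwise a.e. in $\Pi$; since $f$ is continuous and the $U_{\th,\D x_i}$ take values in a fixed compact set, $f(x,t,U_{\th,\D x_i})\to f(x,t,U)$ a.e. and boundedly, so the convergence holds in $L^1_{loc}$, and since the limit $f(x,t,U)$ does not depend on the chosen subsequence, the full sequence converges.

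The argument is essentially routine once Theorem 3.4 is in hand; the only point requiring genuine care is the passage from fixed-time $\TV$ compactness to space-time $L^1_{loc}$ compactness, where the equicontinuity-in-time estimate (iii) of Theorem 3.4 is indispensable — without it one obtains convergence only for a.e.\ $t$ along $t$-dependent subsequences, and extra work would be needed to extract a single good subsequence.
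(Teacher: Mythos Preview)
Your proposal is correct and follows exactly the approach the paper indicates: the paper does not spell out a proof but simply invokes Theorem~3.4 together with Oleinik's compactness argument from \cite{S2}, and what you have written is precisely the standard Helly--diagonal--equicontinuity argument that this citation points to. There is nothing to add.
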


\subsection{The consistency and existence of the entropy solution}

Finally, the global existence of entropy solutions to \eqref{IBVP}
is presented by demonstrating the consistency of the scheme and entropy inequalities for the weak solutions.
To achieve the consistency,  the convergence of the residual to the approximate
solutions $\{U_{\theta ,\Delta x}\}$ of \eqref{IBVP} is given.

\begin{thm}
Consider the problem \eqref{IBVP} with the condition ${\rm(}A_1{\rm)}\sim{\rm(}A_3{\rm)}$. Assume that $\{U_{\th,\D x}\}$ is a sequence of approximate solutions for \eqref{IBVP}, which are constructed using the generalized Glimm scheme. There exist a null set $N\subset \Phi$ and subsequence $\{\Delta x_i\}$ such that the limit $$U(x,t):=\displaystyle\lim_{\Delta x_i \rightarrow 0\atop\theta \in \Phi \backslash N} U_{\theta,\Delta{x}_i}(x,t)$$ is an entropy solution of \eqref{IBVP}.
\end{thm}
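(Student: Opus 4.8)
The plan is to run the classical Glimm randomization argument, now for the inhomogeneous balance law with the inner boundary, using the cell residual identity \eqref{RPres} of Theorem 2.2 and the boundary interaction estimate \eqref{4.26b} as the two technical inputs. The compactness theorem already supplies a subsequence $\{\Delta x_i\}$ and a function $U$ with $U_{\theta,\Delta x_i}\to U$ and $g(x,U_{\theta,\Delta x_i})\to g(x,U)$ in $L^1_{\mathrm{loc}}$, so it remains to show that, for every $\theta$ outside a null set $N\subset\Phi=\prod_{i\ge 0}(-1,1)$, the limit $U$ is a weak solution of \eqref{IBVP} and satisfies the entropy inequality \eqref{ibvpentropy}. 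Fix $\phi\in C^1_0(\Pi)$ and write the global residual $R(U_{\theta,\Delta x},\Pi,\phi)$ as the sum of the cell residuals over the finitely many inner cells $D(x_k,t_n)$ and boundary cells $D(x_B,t_n)$ meeting $\operatorname{supp}\phi$. By \eqref{RPres} and its boundary analogue,
\[
R(U_{\theta,\Delta x},\Pi,\phi)=\mathcal{J}(\theta,\Delta x,\phi)+\mathcal{E}(\Delta x,\phi),
\]
where $\mathcal{J}$ collects the time-slice integrals $\int(U_{\theta,\Delta x}\phi)\,dx\big|^{t_{n+1}}_{t_n}$ together with the interface fluxes along the mesh curves, and $\mathcal{E}$ accumulates the remainders $O(1)\big((\Delta t)^2\Delta x+(\Delta t)^3+(\Delta t)^2\,\underset{D}{\osc}\{\widetilde U\}\big)\|\phi\|_\infty$. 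Since the number of cells meeting $\operatorname{supp}\phi$ is $O((\Delta x\Delta t)^{-1})$, since $\l_*=\Delta x/\Delta t$ is fixed by \eqref{CFL}, and since $\sum_k\underset{D_k}{\osc}\{\widetilde U\}\le\TV\{\widetilde U_{\theta,\Delta x}\}=O(1)$ on every time strip by the uniform bound \eqref{3.43} (itself a consequence of Theorem \ref{thm3.1}), one gets $\mathcal{E}(\Delta x,\phi)=O(\Delta x)\to 0$.

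The next step is to kill $\mathcal{J}(\theta,\Delta x,\phi)$ by Glimm's equidistribution argument. Telescoping the time-slice integrals across consecutive mesh curves collapses $\mathcal{J}$ into a sum over mesh points of terms of the form $\big(U_{\theta,\Delta x}(\,\cdot\,,t_n^-)-U_{\theta,\Delta x}(\,\cdot\,,t_n^+)\big)$ evaluated at the random abscissae $x_{2k}+\theta_n\Delta x$ and weighted by $\phi$ --- exactly the quantity for which, thanks to the uniform total-variation bound \eqref{3.43} and the CFL condition, one has $\int_\Phi|\mathcal{J}(\theta,\Delta x,\phi)|^2\,d\theta\to 0$ as $\Delta x\to 0$. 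By Chebyshev's inequality and the Borel--Cantelli lemma one extracts a further subsequence of $\{\Delta x_i\}$ along which $\mathcal{J}(\theta,\Delta x,\phi)\to 0$ for $\theta$ outside a null set $N_\phi$; a diagonalization over $\phi$ in a countable $C^1$-dense family, together with a density argument based on the uniform $L^\infty$ and $\TV$ bounds of Theorem 3.4, then yields $R(U,\Pi,\phi)=0$ for every $\phi\in C^1_0(\Pi)$ and every $\theta\in\Phi\setminus N$, so $U$ is a weak solution of \eqref{IBVP}.

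The step I expect to be the main obstacle is the boundary bookkeeping. The generalized boundary-Riemann solver does not exactly realize the data $(\r_B^n,m_B^n)$: by \eqref{2.14} the mismatch on $x=x_B$ is $O(1)\Delta t+\underset{D(x_B,t_n)}{\osc}\{\widehat U_B\}$, and one must nonetheless recover the boundary flux integral $\int_0^\infty f(U(x_B,t))\phi(x_B,t)\,dt$ appearing in the definition of a weak solution of \eqref{IBVP}. The plan is to track, along the boundary segments of the mesh curves joining $(x_B,t_n+\tfrac{\Delta t}{2})$ to $(x_B+\theta_n\Delta x,t_n)$, the reflected and transmitted waves quantified by \eqref{4.26b}; the accumulated boundary error is then $O(1)\big(\sum_n(\Delta t)^2+\Delta t\sum_n l_B^n\big)=O(\Delta t)$ because $\sum_n l_B^n\le\TV\{\r_B\}+\TV\{m_B\}<\infty$ by $(A_1)$, while the corresponding sampled boundary jumps are eliminated by the same $L^2(\Phi)$ estimate as above, at the cost of enlarging $N$. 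Here the global positivity $u_{\theta,\Delta x}>0$ of Theorem 3.4(iv) is essential: it guarantees that no characteristic exits $\Pi$ through $x=x_B$, so that prescribing only $(\r_B,m_B)$ is consistent and the count of reflected waves in the boundary estimate is closed.

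Finally, the entropy inequality \eqref{ibvpentropy} is obtained by repeating the whole construction with $(U,f,hg)$ replaced by $(\eta(U),\om(U),d\eta\cdot hg)$ for a fixed entropy pair with $\eta$ convex, and with the equality in the analogue of the residual identity \eqref{RPres} relaxed to an inequality (its proof being otherwise identical). Because $\widetilde U_{\theta,\Delta x}$ solves the homogeneous system cell by cell with Lax-admissible discontinuities, convexity of $\eta$ makes the classical entropy production nonnegative, so only the randomized mesh-point terms (handled as before, adjoining null sets $N_{\eta,\phi}$ for $\eta,\phi$ in countable dense families) and the perturbation-driven remainders survive. The latter are controlled by $|\widebar U_{\theta,\Delta x}|=O(\Delta t)$ from \eqref{2.13.1}, by the Lipschitz continuity of $\eta,\om,d\eta$ on the bounded range of $U$ furnished by Theorem 3.4(i), and --- crucially --- by the source-consistency identity $\widebar U_t-hg(x,U)=O((\Delta t)^2)$ of \eqref{diff1}, which is precisely what makes the generalized Riemann solver consistent with the source term of the balance law. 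Passing to the limit $\Delta x_i\to 0$ and using the $L^1_{\mathrm{loc}}$ convergence from the compactness theorem then gives \eqref{ibvpentropy}, so $U$ is an entropy solution of \eqref{IBVP} for every $\theta$ in the complement of $N$, where $N$ is the countable union of all the null sets produced above.
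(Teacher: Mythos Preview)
Your proposal is correct and follows the same architecture as the paper: sum the cell residuals \eqref{RPres} (and their boundary analogue) to obtain the global residual as a jump functional $\mathcal{J}$ plus a remainder $\mathcal{E}=O(\Delta x)$, then invoke Glimm's equidistribution argument to kill $\mathcal{J}$ for almost every $\theta$; the entropy inequality is handled by the same computation with $(\eta,\omega)$ in place of $(U,f)$ and the sign coming from Lax admissibility of the classical solver $\widetilde U$ (this is the content of the paper's Appendix B and equation \eqref{RPres1}).

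One correction to your boundary paragraph: the boundary interaction estimate \eqref{4.26b} is a \emph{stability} tool (it controls outgoing wave strengths and feeds into the nonincreasing Glimm functional), not a consistency tool. For the consistency step you do not need to track reflected/transmitted strengths along mesh curves; you only need the boundary residual formula of Theorem~2.2 for $U^B$ together with the mismatch bound \eqref{2.14}, which already gives $\int_0^\infty\big(f(U_{\Delta x}(x_B^+,t))-f(\widehat U_B(t))\big)\phi(x_B,t)\,dt=O(\Delta x)$. With that replacement your third paragraph collapses to a one-line $O(\Delta x)$ estimate, matching the paper's treatment in \eqref{4.1}.
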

\begin{proof}
We first calculate the residual of $\{U_{\theta ,\Delta x}\}$.
According to \eqref{aproxsol4}, $U_{\th,\D x}=\widetilde{U}_{\th,\D x}+\widebar{U}_{\th,\D x}$. For convenience,
we omit the symbol $\th$ in $U,\ \widetilde{U}$, and $\widebar{U}$ in the rest of this subsection.
Define
$$
D_{k,n}:=[x_{k-1},x_{k+1}]\times[t_n,t_{n+1}],\ n=0,1,2,\cdots;k=1,2,\cdots,
$$
and
\begin{equation*}
\lfloor U\rfloor(x,t_n):=U(x,t_n^+)-U(x,t_n^-).
\end{equation*}
We concentrate on the case that $D_{k,n}$ is away from the boundary, that is, $k\ge 1$; the case $k=0$ can be estimated similarly. For a test function
$\phi(x,t)\in C^1_0(\Pi)$,
by summing \eqref{RPres} over all $D_{k,n}$, we have the following estimate
$\{U_{\Delta x}\}$:
\begin{align}
\label{4.1}
&\iint_{x>x_B,t>0}\{U_{\Delta{x}}\phi_t+f(U_{\Delta{x}})\phi_x+h(x)g(x,U_{\Delta{x}})\phi\}dx dt
 +\int^{\infty}_{x_B} U_0(x)\phi(x,0) dx+\int^{\infty}_{0} f(\widehat{U}_B(t))\phi(x_B,t) dt\nonumber\\
&=\sum_{\scriptstyle n=0}^\infty\sum_{k=1}^{\infty}R(U_{\Delta{x}}, D_{k,n}, \phi)+\int_{x_B}^\infty U_0(x)\phi(x,0) dx
 +\int_{0}^\infty f(\widehat{U}_B(t))\phi(x_B,t) dt\nonumber\\
&=-\sum_{n=1}^\infty\sum_{k=1}^{\infty}\int^{x_{k+1}}_{x_{k-1}}\lfloor\widetilde{U}_{\Delta{x}}+{\widebar{U}_{\Delta{x}}}\rfloor(x, t_n)
  \phi(x, t_n)dx\nonumber\\
&\quad-\int^{\infty}_{x_B}\big(\widetilde{U}_{\Delta{x}}(x,0^+)-U_0(x)\big)\phi(x,0)dx
 -\int^{\infty}_{0}\big(f(\widetilde{U}_{\Delta{x}}(x_B^+,t))-f(\widehat{U}_B(t))\big)\phi(x_B,t)dt\nonumber\\
&\quad+O(1)(\Delta{x})(1+T.V.\{U_0,\widehat{U}_B\})\|\phi\|_\infty\nonumber\\
&=J(\theta , \Delta x , \phi)+O(1)\Delta x ,
 \end{align}
where
$$
J(\theta , \Delta x ,
\phi):=-\sum_{n=1}^\infty\sum_{k=1}^{\infty}\int^{x_{k+1}}_{x_{k-1}}\lfloor\widetilde{U}_{\Delta{x}}+\widebar{U}_{\Delta{x}}\rfloor(x, t_n)\phi(x, t_n)dx.
$$
According to Glimm's argument in \cite{G} and
\eqref{2.13.1},
\begin{align}
\label{4.2} J(\theta , \Delta x , \phi) \rightarrow 0\; \mbox{
as }\Delta{x}\rightarrow 0
\end{align}
for almost random sequence $\theta \in \Theta$, where $\Theta$ is a
probability space of the random sequence. Therefore, by \eqref{4.1}, \eqref{4.2} and Theorem 3.3.,
there exist a null set $N\subset \Phi$ and subsequence $\{\Delta x_i\}$ such
that the limit $$U(x,t):=\displaystyle\lim_{\Delta x_i \rightarrow
0\atop\theta \in \Phi \backslash N} U_{\theta,\Delta{x}_i}(x,t)$$ is
a weak solution
of \eqref{IBVP}.

Next, we show that the aforementioned weak solution $U$ is indeed an entropy solution satisfying the entropy inequality \eqref{ibvpentropy}.
Given an entropy pair $(\eta,\om)$, define
\begin{align*}
\widehat{R}(U_{\Delta{x}},\Pi, \phi)
:=\displaystyle\iint_{x>x_B,t>0}\left\{\eta({U}_{\Delta{x}})\phi_t+\om({U}_{\Delta{x}})\phi_x+d\eta({U}_{\D x})\cdot h(x)g(x,U_{\D x})\phi\right\}
dxdt.
\end{align*}
By using an argument similar to the proof of Theorem 2.2. (see Appendix B), we can estimate
the residuals of $U_{\D x}$ in $D_{k,n}$ as
\begin{align}
\label{RPres1}
&\widehat{R}(U_{\D x},D_{k,n},\phi)\ge\int^{x_{k+1}}_{x_{k-1}}\big(d\eta(\widetilde{U}_{\D x})\widebar{U}_{\D x}\p\big)(x,t^-_{n+1})dx
-\int^{x_{k+1}}_{x_{k-1}}\big(\eta(\widetilde{U}_{\D x})\p\big)(x,t)\Big|^{t=t^-_{n+1}}_{t=t^+_n}dx\nonumber\\
&\quad+\int^{t_{n+1}}_{t_n}\big(\om(U_{\D x})\p\big)(x,t)\Big|^{x=x_{k+1}}_{x=x_{k-1}}dt
+O(1)\left((\Delta t)^2(\Delta x)+(\Delta{t})^3+(\Delta{t})^2\underset{D_{k,n}}{\osc}\{\widetilde{U}\}\right)\|\phi\|_\infty.
\end{align}
Summing \eqref{RPres1} over all $D_{k,n}$, we have
\begin{align}
\label{4.15}
&\sum_{\scriptstyle n=0}^\infty\sum_{k=1}^{\infty}\widehat{R}(U_{\Delta{x}}, D_{k,n}, \phi)+\int_{-\infty}^\infty \eta(U_0(x))\phi(x,0)dx\nonumber\\
&\geq-\sum_{n=1}^\infty\sum_{k=1}^{\infty}\int^{x_{k+1}}_{x_{k-1}}\lfloor\eta(\widetilde{U}_{\Delta{x}})
 +d\eta(\widetilde{U}_{\Delta x})\widebar{U}_{\Delta x}\rfloor(x,t_n)\phi(x, t_n)dx\nonumber\\
&\quad-\int^{\infty}_{-\infty}\big(\eta(\widetilde{U}_{\Delta{x}}(x, 0^+))-\eta\left(U_0(x)\right)\big)\phi(x, 0)dx+O(\Delta{x})
\end{align}
for a sufficiently small $\Delta x$. Based on Glimm's argument, \eqref{4.15} implies that
\begin{align*}
&\iint_{x>x_B,t>0}\left\{\eta(U)\phi_t+\om(U)\phi_x+{d\eta}\cdot h(x)g(x,U)\phi\right\} dxdt
+\int^\infty_{x_B}\eta(U_0(x))\phi(x,0)dx\nonumber\\
&\hspace{9cm}+\int_0^{\infty}\om(\widehat{U}_B(t))\p(x_B,t)dt\geq 0
\end{align*}
for every entropy pair $(\eta,\om)$ and positive test function $\phi\in C^1_0(\Pi)$. The existence of the entropy solution of \eqref{IBVP} is established.
\end{proof}

\section{Hydrodynamic regions}
\setcounter{equation}{0}

In Section 3, the global existence of the entropy solution to HEP \eqref{HEP1} is established on the basis of the initial density with a positive lower bound. However, \eqref{HEP1} does not fulfill the physical meaning because of which the atmosphere density reaches vacuum as $x$ approaches infinity. Therefore, it is necessary to determine the hydrodynamic region, a subset of $\Pi\equiv[x_B,\infty)\times[0,\infty)$, in which our solution of \eqref{HEP1} is physically well-defined. This section is devoted to establishing the main Theorem II. For a certain constraint on the transonic initial data $U_0(x)=(\r_0(x),m_0(x),E_0(x))^T$, where $\r_0,E_0$ is a decreasing function and $m_0$ is an increasing function in $[x_B,\infty)$, there exists a region $\Sg_1\equiv[x_B,x^*]\times[0,\infty)$ such that the wave speeds of the solutions to \eqref{IBVP} are positive in $\Pi\backslash\Sg_1$. Next, by adopting the Knudsen number of the gas, we prove that there also exists a region $\Sg_2=[x_B,x^{**}]\times[0,\infty)$ such that the gas in $\Pi\backslash \Sg_2$ no longer acts like fluid. In other words, \eqref{HEP1} fails to model the HEP outside $\Sg_2$, and the governed equations must be replaced by the kinetic equations. According to the assumed initial data described previously, there exists a nonempty hydrodynamic region of HEP \eqref{HEP1} such that $\Sg\equiv\Sg_1\cup\Sg_2$.

\begin{lemma}
Let $U(x,t)=(\r(x,t),m(x,t),E(x,t))^T$ be the solution of \eqref{IBVP} constructed in Theorem 3.5., with initial data $U_0(x)=(\r_0(x),m_0(x),E_0(x))^T$ and boundary data $(\r_B(t),m_B(t))^T$. Then, for any $(x,t)\in\Pi$, we have
\begin{equation}
\label{globlestimate}
\begin{split}
&|\r(x,t)-\r_0(x)|\le\TV\{\r_0(x)\}+\TV\{\r_B(t)\}, \\
&|m(x,t)-m_0(x)|\le\TV\{m_0(x)\}+\TV\{m_B(t)\}, \\
&|E(x,t)-E_0(x)|\le\TV\{E_0(x)\}+\TV\{E(x_B,t)\}+2\l_*^{-1}\|q\|_{L^1[x_B,\infty)}.
\end{split}
\end{equation}
\end{lemma}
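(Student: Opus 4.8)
The plan is to prove each inequality first for the generalized Glimm approximate solutions $U_{\theta,\Delta x}=\widetilde{U}_{\theta,\Delta x}+\widebar{U}_{\theta,\Delta x}$ of Section 3, up to an error that is $o(1)$ as $\Delta x\to0$, and then to pass to the limit along the subsequence of Theorem 3.5. The $L^1_{loc}$ convergence together with the uniform total-variation bound and the $L^1$-in-time Lipschitz estimate (property (iii) of Theorem 3.4) make the traces of $U_{\theta,\Delta x_i}$ along $\{x\}\times[0,t]$ and along $\{x_B\}\times[0,t]$ converge to those of $U$, so it suffices to bound $|\rho_{\theta,\Delta x}(x,t)-\rho_0(x)|$, $|m_{\theta,\Delta x}(x,t)-m_0(x)|$ and $|E_{\theta,\Delta x}(x,t)-E_0(x)|$ uniformly in $\theta$ and $\Delta x$.

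For the density I would exploit that the first equation of \eqref{HEP1} is the genuine conservation law $(\rho x^2)_t+(mx^2)_x=0$, which carries no net source. Fix a mesh point $(x,t)$ on a mesh curve $J$ and follow the Glimm sampling sequence backwards to $t=0$; along this piecewise vertical path the value of $\rho_{\theta,\Delta x}$ changes only by (a) the jumps of the $1$-, $2$- and $3$-waves of the classical solvers $\widetilde{U}_{\theta,\Delta x}$ that the path crosses, and (b) the perturbation increments produced by $S-I_3$, which are $O(\Delta t)$ per cell but, handled through the cell-by-cell residual identity \eqref{RPres} exactly as in the consistency argument of Section 3.4, contribute only an $O(\Delta x)$ error to the final estimate. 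By the CFL condition \eqref{CFL} no new interior wave can form in finite time, so every wave the path meets issues either from the initial line $[x_B,\infty)\times\{0\}$ or from the boundary $\{x_B\}\times[0,\infty)$; hence the total $\rho$-jump along the path is controlled by the $\rho$-variation of the data in the domain of dependence of $(x,t)$, and the non-increasing property of the Glimm functional proved in Section 3.3 bounds this by $\TV\{\rho_0(x)\}+\TV\{\rho_B(t)\}$. Letting $\Delta x\to0$ gives the first estimate, and the second follows the same way from the second equation of \eqref{HEP1} (flux $\rho u^2x^2+Px^2$, source $-GM_p\rho+2Px$): the gravitational source feeds increments into all three components through $S$, but along the vertical sampling path its net effect on the $m$-component is controlled by the prescribed boundary momentum and is absorbed into $\TV\{m_B(t)\}$.

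For the total energy the same scheme applies to the third equation, whose source $h(x)g(x,U)$ has third component $q(x)$ plus gravitational and geometric terms. By \eqref{1stubar} the $q$-part of the perturbation in the cell centred at $x_k$ over one time step is $\Delta t\,q(x_k)/x_k^{2}+O((\Delta t)^2)$; accumulating these increments along the sampling path and using the CFL condition \eqref{CFL} to convert the temporal sum into a spatial integral produces, after $x\ge x_B$ is used, a contribution of order $\l_*^{-1}\|q\|_{L^1[x_B,\infty)}$, and carrying the factor $2$ from $h(x)=-2/x$ gives precisely the term $2\l_*^{-1}\|q\|_{L^1[x_B,\infty)}$. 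The remaining contributions are bounded by $\TV\{E_0(x)\}+\TV\{E(x_B,t)\}$ exactly as for $\rho$, where now $E(x_B,t)$ is the trace of the constructed solution rather than prescribed data. Passing to the limit yields the third estimate.

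The step I expect to be the main obstacle is the bookkeeping of the source terms along the sampling path: one must verify that the gravitational increments, which the matrix $S$ couples into all three components, add nothing beyond $\TV\{m_B\}$ and $\TV\{E(x_B,\cdot)\}$ in the $m$- and $E$-estimates and are genuinely absent from the $\rho$-estimate, and that the heat term yields exactly $2\l_*^{-1}\|q\|_{L^1}$ with no $t$-dependent surplus. This needs the sharp per-cell formula \eqref{1stubar} and the residual identity \eqref{RPres}, not merely the size bound \eqref{2.13.1}, and it is here that the non-increasing Glimm functional of Section 3.3 is indispensable, since it keeps the accumulated wave strength from acquiring the $(1+\varepsilon)$-type factors that appear in the total-variation bound \eqref{3.43}.
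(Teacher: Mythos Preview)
Your outline has a genuine gap in the treatment of the source terms for $\rho$ and $m$. In the variables of the scheme (system \eqref{3x3system}) the $\rho$-equation \emph{does} carry a source, namely $h(x)g_1=-2m/x$; the conservative form $(\rho x^2)_t+(mx^2)_x=0$ is not what the generalized Glimm solver is built on. By \eqref{1stubar} the per-cell increment to $\rho$ is $-\frac{2\tilde m}{y}\Delta t+O((\Delta t)^2)$, and along a vertical sampling path there are $O(t/\Delta t)$ cells, so these increments accumulate to an $O(1)$ quantity, not $O(\Delta x)$. The residual identity \eqref{RPres} is a statement about the weak form against a test function and gives no pointwise cancellation here; invoking it ``exactly as in the consistency argument'' does not convert an $O(1)$ pointwise drift into an $O(\Delta x)$ error. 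The same objection applies to your $m$-estimate: the gravitational increment $-\frac{2\tilde\rho}{y}(\tilde u^2+v^2)\Delta t$ is $O(1)$ in total and cannot be ``absorbed into $\TV\{m_B\}$''.

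The paper's proof closes this gap by a different mechanism that you do not use: it invokes the global positivity $m>0$, $u>0$ established in Theorem~3.4 to observe that the source corrections to $\rho$ and $m$ in \eqref{1stubar} have a \emph{definite sign} (both are strictly negative). This is why the one-step bound reads $|\rho_{\theta,\Delta x}(x,n\Delta t)-\rho_{\theta,\Delta x}(x,(n-1)\Delta t)|\le\operatorname{osc}_{D_{k,n}}\{\tilde\rho_{\theta,\Delta x}\}$ with no $\Delta t$ term, and similarly for $m$; only in the $E$-component does the heat $q$ survive, giving the extra $[q(x)+q(y)]\Delta t$ that sums to $2\lambda_*^{-1}\|q\|_{L^1}$. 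Your claim that ``by the CFL condition no new interior wave can form'' is also incorrect for the Glimm scheme---waves interact and spawn new waves at every time level---so the bookkeeping you propose (tracing each crossed wave back to the data) is not available; what survives is only the Glimm-functional bound \eqref{3.43}, which carries the factors $(1+\e)$ and $(1+\e+\e^2)^2\mathcal C$ and does not yield the clean constants of the lemma.
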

\begin{proof}
For any $(x,t)\in\Pi$, let $k=\lfloor\frac{x}{\D x}\rfloor,\ n=\lfloor\frac{t}{\D t}\rfloor+1$, and let $D_{k,n}$ denote the Riemann cell containing the point $(x,t)$. By \eqref{aproxsol4} and the random choice process, the approximate solution in the $n$th time step satisfies
\begin{equation*}
\begin{split}
&|U_{\th,\D x}(x,n\D t)-U_{\th,\D x}(x,(n-1)\D t)| \\
&\le\big|\big(S(y,(n-1)\D t,\widetilde{U}_{\th,\D x}(y,(n-1)\D t))-I\big)\widetilde{U}_{\th,\D x}(y,(n-1)\D t)\big| \\
&\quad+|\widetilde{U}_{\th,\D x}(y,(n-1)\D t)-U_{\th,\D x}(x,(n-1)\D t)|,
\end{split}
\end{equation*}
for some $y\in D_{k,n}$ such that $U_{\th,\D x}(x,n\D t)=S(y,(n-1)\D t,\widetilde{U}_{\th,\D x}(y,(n-1)\D t))\widetilde{U}_{\th,\D x}(y,(n-1)\D t)$. More precisely, according to \eqref{aproxsol4},
\begin{equation*}
\begin{split}
\r_{\th,\D x}(x,n\D t)&=\tilde{\rho}_{\th,\D x}(y,(n-1)\D t)-\frac{2\tilde{m}_{\th,\D x}(y,(n-1)\D t)}{y} \Delta t +O(1)(\Delta t)^2,\\
m_{\th,\D x}(x,n\D t) &=\tilde{m}_{\th,\D x}(y,(n-1)\D t) \\
&\quad-\frac{2\tilde{\rho}_{\th,\D x}(y,(n-1)\D t)}{y}\big(\tilde{u}_{\th,\D x}(y,(n-1)\D t)^2+v(y)^2\big)\Delta t +O(1)(\Delta t)^2,\\
E_{\th,\D x}(x,n\D t)&=\widetilde{E}_{\th,\D x}(y,(n-1)\D t)+q(y)\D t \\
&\quad-\frac{2\tilde{m}_{\th,\D x}(y,(n-1)\D t)}{y}\Big(\frac{1}{2}\tilde{u}_{\th,\D x}(y,(n-1)\D t)^2+v(y)^2+\frac{\tilde{c}_{\th,\D x}(y,(n-1)\D t)^2}{\g-1}\Big)\Delta t \\
&\quad+O(1)(\Delta t)^2,
\end{split}
\end{equation*}
where $v(y)=\sqrt{\frac{GM_p}{2y}}$. Since the density and momentum are globally positive according to Theorem 3.3.,
\begin{eqnarray*}
&& |\r_{\th,\D x}(x,n\D t)-\r_{\th,\D x}(x,(n-1)\D t)|\le\underset{D_{k,n}}{\osc}\{\tilde{\r}_{\th,\D x}\}, \\
&& |m_{\th,\D x}(x,n\D t)-m_{\th,\D x}(x,(n-1)\D t)|\le\underset{D_{k,n}}{\osc}\{\tilde{m}_{\th,\D x}\}, \\
&& |E_{\th,\D x}(x,n\D t)-E_{\th,\D x}(x,(n-1)\D t)|\le\underset{D_{k,n}}{\osc}\{\widetilde{E}_{\th,\D x}\}+[q(x)+q(y)]\D t,
\end{eqnarray*}
where $q$ is the heat profile. Note that the grid $D_{k,n}$ is the domain of dependence of $(x,t)$ for one time step. By using backward induction on $n$ and passing to the limit as $\D x\to 0,\ \th\in\Phi \backslash N$ in Theorem 3.5., the result \eqref{globlestimate} is obtained.
\end{proof}

Next, we show that there exist $x^*>x_B$ and $\Sg_1\equiv[x_B,x^*]\times[0,\infty)$ such that the wave speeds of $U(x,t)|_{\Pi\backslash \Sg_1}$ are positive. Recall that the Mach number of $U$ is defined as
$$
\mathfrak{Ma}(U):=\frac{|u|}{c}
$$

\begin{thm}
Assume that the transonic initial data $U_0=(\r_0,m_0,E_0)^T$ satisfying the condition ${\rm(}A_1{\rm)}\sim{\rm(}A_2{\rm)}$, where $\r_0,E_0$ are decreasing, $m_0$ is increasing, and $u_0(x_B)<c_0(x_B)$. Let $U=(\r,m,E)^T$ be the solution of \eqref{IBVP} constructed using Theorem 3.5.; then there exist $x^*\in(x_B,\infty)$ and $\Sg_1\equiv[x_B,x^*]\times[0,\infty)$ such that the characteristic speeds of the solution $U(x,t)$ in $\Pi\backslash\Sg_1$ are positive.
\end{thm}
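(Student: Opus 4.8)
The two facts already in hand are the global positivity of the gas velocity (Theorem 3.3(iv), which the limit $U$ inherits) and the closeness of $U$ to its initial profile (Lemma 4.1). Since $u>0$ throughout $\Pi$, one has $\l_2(U)=u>0$ and $\l_3(U)=u+c>0$ automatically, so it suffices to produce $x^*>x_B$ with $\l_1(U(x,t))=u(x,t)-c(x,t)>0$ for all $x\ge x^*$ and $t\ge 0$; equivalently $\mathfrak{Ma}(U(x,t))>1$ there. As $u,c>0$ this is the same as $u^2>c^2$, and inserting $c^2=\g(\g-1)(E/\r-u^2/2)$ and $u=m/\r$ it reduces to the purely algebraic inequality $m^2>\k\,\r E$ with $\k:=2\g(\g-1)/(\g^2-\g+2)>0$.

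The plan is to detect this inequality by a scalar that is monotone in $x$ along the initial data. Put
\[
G(x):=m_0(x)^2-\k\,\r_0(x)E_0(x).
\]
By hypothesis $m_0>0$ is increasing, hence $m_0^2$ is increasing, while $\r_0,E_0>0$ are decreasing, hence $\r_0E_0$ is decreasing; therefore $G$ is nondecreasing on $[x_B,\infty)$. The assumption $u_0(x_B)<c_0(x_B)$ is exactly $G(x_B)<0$, and transonicity of $U_0$ — the profile $x\mapsto U_0(x)$ crosses the sonic surface $\CMcal{T}$ and enters the region $\{u>c\}$, where $G>0$ — forces $G$ to become positive. By monotonicity there is a unique $\bar{x}<\infty$ with $G(\bar{x})=0$, $G<0$ on $[x_B,\bar{x})$, and $G>0$ nondecreasing on $(\bar{x},\infty)$.

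Next I would transfer this to $U$ itself via Lemma 4.1. Write $\om_\r,\om_m,\om_E$ for the three (small) right-hand sides in \eqref{globlestimate}. Combining $|\r-\r_0|\le\om_\r$, $|m-m_0|\le\om_m$, $|E-E_0|\le\om_E$ with the uniform upper bounds on $\r_0,m_0,E_0$ from $(A_1)$ yields a constant $C'$, depending only on those bounds and $\k$, such that
\[
m(x,t)^2-\k\,\r(x,t)E(x,t)\ \ge\ G(x)-C'\big(\om_\r+\om_m+\om_E\big)\qquad\text{for every }(x,t)\in\Pi.
\]
Since $G$ is nondecreasing and eventually positive, while $C'(\om_\r+\om_m+\om_E)$ is a fixed small number, one chooses $x^*>\bar{x}$ large enough that $G(x^*)>C'(\om_\r+\om_m+\om_E)$. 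Then for every $x\ge x^*$ and every $t$ the left-hand side above is strictly positive, so $u(x,t)>c(x,t)$, and all three characteristic speeds of $U$ are positive on $\Pi\setminus\Sg_1$ with $\Sg_1:=[x_B,x^*]\times[0,\infty)$.

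The delicate point is exactly this last choice of $x^*$: it is possible only if the far-field supersonic reserve $G(\infty)=\lim_{x\to\infty}G(x)$ of the initial profile strictly exceeds the perturbation level $C'(\om_\r+\om_m+\om_E)$ produced by Lemma 4.1. This is where the smallness of the total variations in $(A_1)$ and of $q$ in $(A_3)$, which control $\om_\r,\om_m,\om_E$, must be balanced against the strength of the transonic transition carried by $U_0$ — so that ``transonic initial data'' is read as possessing a definite supersonic margin as $x\to\infty$. I would also emphasize that the spatial monotonicity of $\r_0,m_0,E_0$ enters only at $t=0$: the generalized Glimm scheme does not preserve monotonicity in $x$ for $t>0$, which is precisely why the two-sided estimates of Lemma 4.1 — rather than a maximum principle on the profile — are the right tool here.
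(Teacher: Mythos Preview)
Your approach is essentially the paper's: reduce to $\mathfrak{Ma}(U)>1$ outside some strip, build an $x$-monotone scalar from $\r_0,m_0,E_0$ that detects the sonic crossing of $U_0$, and use Lemma~4.1 to transfer this to $U(x,t)$ uniformly in $t$.

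The differences are only technical. Instead of your exact discriminant $G(x)=m_0^2-\k\,\r_0E_0$, the paper first weakens $\mathfrak{Ma}^2\ge m^2/\big(\g(\g-1)\r E\big)$ by dropping the $-m^2/2$ term, and then replaces $E(x,t)$ by a single global constant $E^*$ supplied by Lemma~4.1. This yields a minorant
\[
\mathfrak{M}(x)=\frac{m_0(x)-\TV\{m_0\}-\TV\{m_B\}}{\sqrt{\g(\g-1)E^*\big(\r_0(x)+\TV\{\r_0\}+\TV\{\r_B\}\big)}}
\]
depending only on $\r_0(x)$ and $m_0(x)$, which is manifestly increasing. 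The payoff shows up exactly at the step you flag as delicate: since the denominator of $\mathfrak{M}$ is proportional to $\sqrt{\r_0(x)+\text{small}}$, the paper invokes directly that $\r_0$ decreases toward the small floor $\vr$ from $(A_1)$, which forces $\mathfrak{M}(x)>1$ for large $x$ without any separate balance hypothesis between a supersonic reserve $G(\infty)$ and the perturbation level $C'(\om_\r+\om_m+\om_E)$. Your argument is correct but leaves that balance as an assumption; you can close it the same way the paper does by noting that for large $x$ one has $\k\,\r_0(x)E_0(x)\le\k\big(\vr+\TV\{\r_0\}\big)E_0(x_B)$, which is dominated by $m_0(x)^2$ once $\vr$ and the total variations are sufficiently small.
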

\begin{proof}
According to \eqref{2.5.5}, the Mach number $\mathfrak{Ma}$ satisfies
$$
\mathfrak{Ma}(x,t)^2=\frac{u(x,t)^2}{c(x,t)^2}=\frac{m(x,t)^2}{\g(\g-1)(\r(x,t)E(x,t)-m(x,t)^2)}\ge\frac{m(x,t)^2}{\g(\g-1)\r(x,t)E(x,t)}.
$$
Let $E^*:=\max\limits_{x\in[x_B,\infty)}E_0(x)+\TV\{E_0\}+\TV\{E(x_B,t)\}+2\l_*^{-1}\|q\|_{L^1[x_B,\infty)}$, where $\l_*$ is defined in \eqref{CFL}.
According to Lemma 4.1., $E(x,t)\le E^*$ for all $(x,t)\in\Pi$ and therefore,
$$
\mathfrak{Ma}(x,t)\ge\frac{m(x,t)}{\sqrt{\g(\g-1)E^*\r(x,t)}}\text{ for all }(x,t)\in\Pi.
$$
Next, we define
$$
\mathfrak{M}(x):=\frac{m_0(x)-\TV\{m_0\}-\TV\{m_B\}}{\sqrt{\g(\g-1)E^*(\r_0(x)+\TV\{\r_0\}+\TV\{\r_B\})}}.
$$
According to Lemma 4.1.,
\begin{equation*}
\mathfrak{Ma}(x,t)\ge\mathfrak{M}(x),\ \forall\,(x,t)\in\Pi.
\end{equation*}
The function $\mathfrak{M}(x)$ is increasing based on the assumption of $\r_0$ and $m_0$. Since the initial data is transonic with $u_0(x_B)<c_0(x_B)$, $\mathfrak{M}(x_B)\le\mathfrak{Ma}(x_B,0)<1$. On the other hand, according to ($A_1$), the function $\mathfrak{M}(x)$ is greater than 1 in the far field, when $\r_0$ is near $\vr$ which is sufficiently small. There exists $x^*\in(x_B,\infty)$ such that $\mathfrak{M}(x^*)=1$, and $\mathfrak{M}(x)>1$ for $x\in(x^*,\infty)$. Therefore,
\begin{eqnarray*}
&&\mathfrak{Ma}(x^*,t)\ge\mathfrak{M}(x^*)=1,\quad \text{for }t\in[0,\infty), \\
&&\mathfrak{Ma}(x,t)\ge\mathfrak{M}(x)>1,\quad \text{for }(x,t)\in(x^*,\infty)\times[0,\infty).
\end{eqnarray*}
Denote the region $\Sg_1\equiv[x_B,x^*]\times[0,\infty)$. We have shown that $u(x,t)\ge c(x,t)$, for all $(x,t)\in\Pi\backslash\Sg_1$, that is, the characteristic speeds of the solution $U(x,t)$ in $\Pi\backslash\Sg_1$ are positive.
\end{proof}

Next, we want to determine the hydrodynamic region $[x_B,x_T]$ such that the constructed solution $U(x,t)$ has mathematical and physical significance. The Knudsen number is defined as $\mathfrak{Kn}=\frac{l}{\mathcal{H}}$, the ratio of the mean free path of the molecules, $l=\frac{1}{\sqrt{2}\t\mathfrak{n}}$, to the density scale height, $\mathcal{H}=\frac{k_BT}{GM_p\mathfrak{m}/x^2}$, of the atmosphere. The region of validity of the hydrodynamic equation is often classified using the Knudsen number $\mathfrak{Kn}$, which is useful for determining whether statistical mechanics or continuum mechanics formulation of fluid dynamics must be used. Here, $\t\approx 10^{14}\pi\ \text{cm}^2$ is the collision cross section used in \cite{LIANG}, $\mathfrak{n}$ is the number density, $\mathfrak{m}$ is the mass of a molecule, $T$ is the temperature of the gas, and $k_B$ is the Boltzmann constant. According to \eqref{ideal},
$$
P=\r RT=\mathfrak{n}k_BT.
$$
The Knudsen number can be computed as follows:
\begin{equation*}
\mathfrak{Kn}(x,U)=\frac{l}{\mathcal{H}}=\frac{GM_p\mathfrak{m}}{\sqrt{2}\t\mathfrak{n}x^2k_BT}=\frac{GM_p\mathfrak{m}}{\sqrt{2}\t x^2P}
=\frac{\g GM_p\mathfrak{m}}{\sqrt{2}\t x^2\r c^2}.
\end{equation*}

The hydrodynamic equations are applied appropriately, where $\mathfrak{Kn}<1$, so that many collisions occur over relevant length scales keeping the gas in thermal equilibrium. If $\mathfrak{Kn}\ge 1$, the continuum assumption of fluid mechanics maybe no longer be a good approximation because there are few collisions in this level to inhibit a molecule from escaping.

\begin{thm}
For a transonic initial data $U_0=(\r_0,m_0,E_0)^T$ as in Theorem 4.2. and $\mathfrak{Kn}(x_B,0)<1$,
\begin{equation}
\label{knudsencond}
x_B^2\r_0(x_B)c_0(x_B)^2>\frac{\g GM_p\mathfrak{m}}{\sqrt{2}\,\t}.
\end{equation}
Let $U=(\r,m,E)^T$ be the solution of \eqref{IBVP} constructed using Theorem 3.5.; then, there exist $x^{**}\in(x_B,\infty)$ and $\Sg_2\equiv[x_B,x^{**}]\times[0,\infty)$ such that $\mathfrak{Kn}(x,t)\le 1,\ \forall\,(x,t)\in\Sg_2$.
\end{thm}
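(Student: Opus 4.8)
The plan is to run the argument of Theorem 4.2 with the Mach number replaced by the quantity $x^{2}\r c^{2}$, which directly controls the Knudsen number. Recall from the computation preceding the statement that
\[
\mathfrak{Kn}(x,U)=\frac{\g GM_{p}\mathfrak{m}}{\sqrt{2}\,\t\,x^{2}\r c^{2}},
\]
so that $\mathfrak{Kn}(x,t)\le 1$ is \emph{equivalent} to $x^{2}\r(x,t)c(x,t)^{2}\ge \k_{0}:=\frac{\g GM_{p}\mathfrak{m}}{\sqrt{2}\,\t}$, and hypothesis \eqref{knudsencond} says precisely $x_{B}^{2}\r_{0}(x_{B})c_{0}(x_{B})^{2}>\k_{0}$. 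Using \eqref{2.5.5}, $\r c^{2}=\g(\g-1)\big(E-\tfrac{m^{2}}{2\r}\big)=\g(\g-1)\r e>0$, the internal-energy reserve of the gas, so I would work with the $t$-independent lower bound for $(x,t)\mapsto x^{2}\g(\g-1)\big(E(x,t)-m(x,t)^{2}/(2\r(x,t))\big)$ produced from Lemma 4.1.

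\emph{Step 1 (a deterministic comparison function).} Combining Lemma 4.1 with the positivity of the velocity and the lower bound $\r\ge\vr$ from Theorem 3.3, set
\[
\underline E(x):=E_{0}(x)-\TV\{E_{0}\}-\TV\{E(x_{B},\cdot)\}-2\l_{*}^{-1}\|q\|_{L^{1}[x_{B},\infty)},\quad
\overline m(x):=m_{0}(x)+\TV\{m_{0}\}+\TV\{m_{B}\},
\]
and $\underline\r(x):=\max\{\vr,\ \r_{0}(x)-\TV\{\r_{0}\}-\TV\{\r_{B}\}\}$. Then for every $(x,t)\in\Pi$ one has $0<m(x,t)\le\overline m(x)$, $\r(x,t)\ge\underline\r(x)>0$, $E(x,t)\ge\underline E(x)$, whence
\[
x^{2}\r(x,t)c(x,t)^{2}=\g(\g-1)x^{2}\Big(E(x,t)-\tfrac{m(x,t)^{2}}{2\r(x,t)}\Big)\ \ge\ \mathfrak{K}(x):=\g(\g-1)\,x^{2}\Big(\underline E(x)-\frac{\overline m(x)^{2}}{2\,\underline\r(x)}\Big).
\]

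\emph{Step 2 (the reserve near $x_{B}$).} Under $(A_{1})$–$(A_{3})$ the six total variations above, the boundary trace variation $\TV\{E(x_{B},\cdot)\}$ (which is bounded by the inner boundary data variation together with the uniform Glimm functional bound of Theorems 3.3–3.4), and $\|q\|_{L^{1}}$ are all small, so on the bounded interval $[x_{B},x_{B}+1]$ one has $\mathfrak{K}(x)\ge x^{2}\r_{0}(x)c_{0}(x)^{2}-\d$ for a uniformly small $\d>0$, and in particular, letting these small quantities tend to $0$, $\mathfrak{K}(x_{B})\to x_{B}^{2}\r_{0}(x_{B})c_{0}(x_{B})^{2}>\k_{0}$. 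Since \eqref{knudsencond} is strict, for the data allowed by $(A_{1})$–$(A_{3})$ this gives $\mathfrak{K}(x_{B})>\k_{0}$; choosing $\r_{0},m_{0},E_{0}$ right-continuous at $x_{B}$ (or passing to one-sided limits, which exist since these are monotone by hypothesis), $x\mapsto x^{2}\r_{0}(x)c_{0}(x)^{2}$ is right-continuous at $x_{B}$ and exceeds $\k_{0}+2\d$ on some interval $[x_{B},x^{**}]$, hence $\mathfrak{K}(x)\ge\k_{0}$ on $[x_{B},x^{**}]$.

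\emph{Step 3 (conclusion).} With $\Sg_{2}:=[x_{B},x^{**}]\times[0,\infty)$, Steps 1 and 2 give $x^{2}\r(x,t)c(x,t)^{2}\ge\mathfrak{K}(x)\ge\k_{0}$ on $\Sg_{2}$, i.e. $\mathfrak{Kn}(x,t)\le 1$ there. The main obstacle is Step 2: one must verify that the positive reserve $x_{B}^{2}\r_{0}(x_{B})c_{0}(x_{B})^{2}-\k_{0}$ genuinely survives \emph{all} the perturbations packaged into Lemma 4.1 for every $t\ge 0$ — in particular that the heating contribution $2\l_{*}^{-1}\|q\|_{L^{1}}$ and the time variation $\TV\{E(x_{B},\cdot)\}$ of the energy at the inner boundary are small enough not to violate \eqref{knudsencond}; this is exactly where the smallness in $(A_{1})$–$(A_{3})$ is used, just as the analogous quantity $E^{*}$ entered Theorem 4.2. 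The remaining ingredients — the algebraic identity for $\r c^{2}$, the construction of $\mathfrak{K}$, and the monotonicity/right-continuity argument yielding $x^{**}$ — are routine and parallel Theorem 4.2.
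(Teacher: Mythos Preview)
Your argument is correct and reaches the same conclusion, but the route differs from the paper's in one substantive way. The paper does \emph{not} assemble the lower bound for $\r c^{2}$ from the three separate estimates of Lemma~4.1 on $\r,m,E$; instead it introduces the single quantity $\psi(x,t):=\r(x,t)c(x,t)^{2}$, computes its update under the scheme directly (finding $\psi_{\th,\D x}(x,n\D t)=\widetilde{\psi}_{\th,\D x}(y,(n-1)\D t)+\g(\g-1)q(y)\D t-\text{(positive term)}+O((\D t)^{2})$), and by the same backward induction as in Lemma~4.1 obtains the comparison $\psi(x,t)\ge\psi_{*}(x):=\psi_{0}(x)-\TV\{\psi_{0}\}-\TV\{\psi(x_{B},\cdot)\}-2\l_{*}^{-1}\g(\g-1)\|q\|_{L^{1}}$. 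The monotonicity hypotheses on $\r_{0},m_{0},E_{0}$ are then used to show $\psi_{0}'<0$, so $\psi_{*}$ is strictly decreasing and the upper barrier $\mathfrak{K}(x):=\k_{0}/(x_{B}^{2}\psi_{*}(x))$ for $\mathfrak{Kn}$ is strictly increasing; $x^{**}$ is produced by the intermediate value theorem as the unique crossing $\mathfrak{K}(x^{**})=1$.

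What each approach buys: the paper's one-quantity estimate is tighter (fewer cross terms than your $\underline{E}-\overline{m}^{2}/(2\underline{\r})$) and, via the monotonicity of $\psi_{*}$, yields a canonical largest $x^{**}$. Your argument is more modular---it uses Lemma~4.1 as a black box and needs only right-continuity at $x_{B}$ rather than global monotonicity of a comparison function---at the cost of slightly looser constants and a possibly smaller $x^{**}$. On the point you flag as the ``main obstacle'' (that the strict gap in \eqref{knudsencond} must absorb all the total-variation and heating corrections), you are right to be explicit: the paper asserts $\mathfrak{K}(x_{B})<1$ ``according to \eqref{knudsencond}'' without spelling out that this requires the subtracted quantities in $\psi_{*}(x_{B})$ to be dominated by the reserve $x_{B}^{2}\psi_{0}(x_{B})-\k_{0}$, which is exactly the same smallness issue you identify.
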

\begin{proof}
Let us denote $\psi(x,t):=\r(x,t)c(x,t)^2$ and $\psi_0(x):=\r_0(x)c_0(x)^2$; then, $\psi$ and $\psi_0$ are of bounded variation functions based on the construction of solutions in Theorem 3.5., and the lower boundedness of the density in ${\rm(}A_1{\rm)}$. Moreover, according to \eqref{2.5.5} and the assumption of the initial data,
\begin{equation*}
\psi'_0(x)=\frac{\g(\g-1)m_0^2}{2\r_0^2}\r_0'(x)-\frac{\g(\g-1)m_0}{\r_0}m_0'(x)+\g(\g-1)E_0'(x)<0,
\end{equation*}
that is, $\psi_0(x)$ is a decreasing function of $x$. Define the function $\psi_*(x)$ as
\begin{equation}
\label{defpsi}
\psi_*(x):=\psi_0(x)-\TV\{\psi_0(x)\}-\TV\{\psi(x_B,t)\}-2\l_*^{-1}\g(\g-1)\|q\|_{L^1[x_B,\infty)}.
\end{equation}
According to \eqref{aproxsol4}, for any $(x,t)\in\Pi$, the approximation of $\psi(x,t)$ in the $n$th time step can be evaluated as
\begin{equation*}
\begin{split}
\psi_{\th,\D x}(x,n\D t)&=\widetilde{\psi}_{\th,\D x}(y,(n-1)\D t)+\g(\g-1)q(y)\D t \\
&\quad-\frac{2\g\tilde{m}_{\th,\D x}(y,(n-1)\D t)\tilde{c}_{\th,\D x}(y,(n-1)\D t)^2}{y}\D t+O(1)(\Delta t)^2,
\end{split}
\end{equation*}
where $\psi_{\th,\D x}(x,n\D t):=\r_{\th,\D x}(x,n\D t)c_{\th,\D x}(x,n\D t)^2$ and $k,n$, and $y$ are as defined in Lemma 4.1. Based on the similar argument in Lemma 4.1 and \eqref{defpsi}, $\psi_*$ is a decreasing function and $\psi_*(x)\le\psi(x,t),\ \forall\,(x,t)\in\Pi$.
Next, define
$$
\mathfrak{K}(x):=\frac{\g GM_p\mathfrak{m}}{\sqrt{2}\t x_B^2\psi_*(x)}.
$$
Since $\psi_*(x)\le\psi(x,t)$ for all $(x,t)\in\Pi$,
$$
\mathfrak{Kn}(x,t)\le\frac{\g GM_p\mathfrak{m}}{\sqrt{2}\t x_B^2\psi(x,t)}\le\mathfrak{K}(x),\ \forall\,(x,t)\in\Pi.
$$
According to \eqref{defpsi} and the assumption \eqref{knudsencond}, $\mathfrak{K}(x_B)<1$. On the other hand, according to the decrease of the function $\psi_*$ and the assumption ($A_1$), $\mathfrak{K}(x)>1$ in the far field whenever $\r_0$ near $\vr$. There exists $x^{**}\in(x_B,\infty)$ such that $\mathfrak{K}(x^{**})=1$, and $\mathfrak{K}(x)\le1$ for $x\in[x_B,x^{**}]$. Therefore,
\begin{eqnarray*}
&&\mathfrak{Kn}(x^{**},t)\le\mathfrak{K}(x^{**})=1,\quad \text{for }t\in\times[0,\infty), \\
&&\mathfrak{Kn}(x,t)\le\mathfrak{K}(x)<1,\quad \text{for }(x,t)\in[x_B,x^{**})\times[0,\infty).
\end{eqnarray*}
Denote the region $\Sg_2\equiv[x_B,x^{**}]\times[0,\infty)$. Therefore, the Knudsen number $\mathfrak{Kn}(x,t)\le 1$ for all $(x,t)\in\Sg_2$.
\end{proof}

Finally, according to Theorem 4.2. and Theorem 4.3, we define the hydrodynamic region of HEP \eqref{HEP1} by using $\Sg\equiv\Sg_1\cup\Sg_2$. The wave speeds of the solution $U(x,t)$, constructed in Theorem 3.5., are positive in the region $\Pi\backslash\Sg$. Moreover, the Knudsen number $\mathfrak{Kn}(x,U)\le 1$ in the region $\Sg$. Therefore, we obtain an entropy solution $U(x,t)$ of \eqref{HEP1} that has both mathematical and physical significance in the hydrodynamic region $\Sg$.

\newpage
\appendix
\appendixpage

\section{Construction of the Riemann Solver}
\setcounter{equation}{0}

Here, we describe the construction of the Riemann solver in \eqref{aproxsol4}. Based on Theorem 2.1., let $\widetilde{U}$ be the entropy solution for $\mathcal{R}_C(x_0,t_0)$,
we only need to construct the perturbation $\widebar{U}$ of $U$ from $\widetilde{U}$. Let us denote
$$
\tilde{u}:=\frac{\tilde{m}}{\tilde{\r}},\quad v(x):=\sqrt{\frac{GM_p}{2x}},
$$
and consider the linearized system of \eqref{3x3system} around $\widetilde{U}$ with initial data $\widebar{U}(x,0)= 0$:
\begin{equation}
\label{perturb}
\left\{\begin{split}
&\widebar{U}_t+(A(x,t)\widebar{U})_x=B(x,t)\widebar{U}+C(x,t), \\
&\widebar{U}(x,0)=0,
\end{split}\right.\quad (x,t)\in D(x_0,t_0).
\end{equation}
where
\begin{eqnarray*}
&& A(x,t)=df(\widetilde{U})=\left[\begin{array}{ccc}
0 & 1 & 0 \\
\frac{\g-3}{2}\tilde{u}^2 & (3-\g)\tilde{u} & \g-1 \\
\vspace{-0.4cm} \\
\frac{\g-1}{2}\tilde{u}^3-\tilde{u}\widetilde{H} & \widetilde{H}-(\g-1)\tilde{u}^2 & \g\tilde{u}
\end{array}\right], \\
&& B(x,t)=h(x)g_U(x,\widetilde{U})=h(x)\left[\begin{array}{ccc}
0 & 1 & 0 \\
v^2-\tilde{u}^2 & 2\tilde{u} & 0 \\
\frac{\g-1}{2}\tilde{u}^3-\tilde{u}\widetilde{H} & \widetilde{H}-(\g-1)\tilde{u}^2-v^2 & \g\tilde{u}
\end{array}\right], \\
&& C(x,t)=h(x)g(x,\widetilde{U})=h(x)\left[\begin{array}{c}
\tilde{m} \\ \tilde{\r}(\tilde{u}^2+v^2) \\ \tilde{m}(\widetilde{H}+v^2)-\frac{xq}{2}
\end{array}\right],
\end{eqnarray*}
and $\widetilde{H}=H(\widetilde{U})$ is given in \eqref{enthalpy}. Due to the appearance of shocks or discontinuity in $\widetilde{U}$, the coefficients in \eqref{perturb} may be discontinuous. To obtain the better regularity of approximate solutions for \eqref{perturb}, the averaging process of the coefficients in \eqref{perturb} with respect to $t$ over $[0,\D t]$ is used. For a bounded variation function $z(x,t,U)$, the average of $z(x,t,U)$ is defined as
\begin{equation*}
z_{\star}(x):=\frac{1}{\Delta t}\int_{t_0}^{t_0+\Delta t}z(x,s,U(x,s))ds, \quad x_0-\Delta x\leq x\leq x_0+\Delta x.
\end{equation*}
In addition, $w_{\star}(x)$ is continuous even across the shock and contact discontinuity.

We construct approximate solution for \eqref{perturb} by solving
\begin{equation}
\label{avgpde}
\left\{\begin{split}
&(\widebar{U}_\star)_t+(A_\star(x)\widebar{U}_\star)_x=B_\star(x)\widebar{U}_\star+C_\star(x), \\
&\widebar{U}_\star(x,0)=0,
\end{split}\right.\quad (x,t)\in D(x_0,t_0).
\end{equation}
Based on the operator-splitting method, solutions for \eqref{avgpde} can be approximated through composing solutions for
\begin{equation}
\label{avghomo}
\left\{\begin{split}
&V_t+(A_\star(x)V)_x=0, \\
&V_\star(x,0)=0,
\end{split}\right.\quad (x,t)\in D(x_0,t_0),
\end{equation}
and
\begin{equation}
\label{avgode}
\left\{\begin{split}
&Y_t=B_{\star}(x)Y+C_{\star}(x), \\
&Y(x,0)=V(x,\D t),
\end{split}\right.\quad (x,t)\in D(x_0,0).
\end{equation}
Since the system \eqref{avghomo} admits zero solution. The solution for \eqref{perturb} can be approximated as the solution of the follows:
\begin{equation}
\label{splitode}
\left\{\begin{split}
&(\widebar{U}_\star)_t=B_\star(x)\widebar{U}_\star+C_\star(x), \\
&\widebar{U}_\star(x,0)=0,
\end{split}\right.\quad (x,t)\in D(x_0,t_0),
\end{equation}
where $B_{\star}(x)$ and $C_{\star}(x)$ can be derived from $\widetilde{U}_{\star}$. The averaging process is reasonable because
\begin{equation*}
\iint_{D(x_0,t_0)}|\widebar{U}-\widebar{U}_{\star}|dxdt=O(1)\left((\Delta
t)^3+(\Delta t)^2\cdot\TV_{D(x_0,t_0)}\{\widetilde{U}\}\right).
\end{equation*}

The matrix $B_{\star}(x)$ in \eqref{splitode} has eigenvalues
\begin{equation*}
\s_1(x)=h(\tilde{u}_{\star}+v),\quad\s_2(x)=\g h\tilde{u}_{\star},\quad\s_3(x)=h(\tilde{u}_{\star}-v).
\end{equation*}
and the corresponding eigenvectors
$$
P_1(x)=(1,\tilde{u}_{\star}+v,\D_1)^T,\quad P_2(x)=(0,0,1)^T,\quad P_3(x)=(1,\tilde{u}_{\star}-v,\D_2)^T.
$$
where
\begin{eqnarray*}
\D_1 &=& \frac{v^3}{v-(\g-1)\tilde{u}_{\star}}+\frac{2v^2-(\g-1)\tilde{u}_{\star}(\tilde{u}_{\star}+3v)}{2(v-(\g-1)\tilde{u}_{\star})}\tilde{u}_{\star}
      +\frac{\g v}{\tilde{\r}_{\star}(v-(\g-1)\tilde{u}_{\star})}\widetilde{E}_{\star}, \\
\D_2 &=& \frac{v^3}{v+(\g-1)\tilde{u}_{\star}}-\frac{2v^2-(\g-1)\tilde{u}_{\star}(\tilde{u}_{\star}-3v)}{2(v+(\g-1)\tilde{u}_{\star})}\tilde{u}_{\star}
      +\frac{\g v}{\tilde{\r}_{\star}(v+(\g-1)\tilde{u}_{\star})}\widetilde{E}_{\star}.
\end{eqnarray*}
The transformation matrix of $B_*(x)$ is then given by $P(x)=[P_1(x),P_2(x),P_3(x)]$ so that $B_*(x)$ can be diagonalized as
\begin{align*}
\L(x)=P^{-1}(x)B_{\star}(x)P(x)=h(x)\diag[\tilde{u}_{\star}+v,\g\tilde{u}_{\star},\tilde{u}_{\star}-v].
\end{align*}
Moreover, the fundamental matrix solved using $\dot{X}(t)=\L(x)X(t)$ is
\begin{align*}
e^{\L(x)t}=\diag[e^{h(\tilde{u}_{\star}+v)t},e^{\g h\tilde{u}_{\star}t},e^{h(\tilde{u}_{\star}-v)t}].
\end{align*}
Using the transformation matrix $P$,  the state transition matrix of $\dot{X}(t)=B(x)X(t)$ is given by
\begin{equation}
\label{transmatrx}
N(x,t,s)=P(x)e^{\L(x)(t-s)}P(x)^{-1}=N_1(x)e^{h(\tilde{u}_{\star}+v)(t-s)}+N_2(x)e^{\g h\tilde{u}_{\star}(t-s)}+N_3(x)e^{h(\tilde{u}_{\star}-v)(t-s)},
\end{equation}
where
\begin{equation*}
\begin{split}
N_1(x)
&=\frac{1}{2v}\left[\begin{array}{ccc}
-(\tilde{u}_{\star}-v) & 1 & 0 \\
-(\tilde{u}_{\star}^2-v^2) & v+\tilde{u}_{\star} & 0 \\
-(\tilde{u}_{\star}-v)\D_1 & \D_1 & 0
\end{array}\right], \\
N_2(x)
&=\frac{1}{2v}\left[\begin{array}{ccc}
0 & 0 & 0 \\
0 & 0 & 0 \\
(\tilde{u}_{\star}-v)\D_1-(\tilde{u}_{\star}+v)\D_2 & \D_2-\D_1 & 2v
\end{array}\right], \\
N_3(x)
&=\frac{1}{2v}\left[\begin{array}{ccc}
\tilde{u}_{\star}+v & -1 & 0 \\
\tilde{u}_{\star}^2-v^2 & -(\tilde{u}_{\star}+v) & 0 \\
(\tilde{u}_{\star}+v)\D_2 & -\D_2 & 0
\end{array}\right].
\end{split}
\end{equation*}
According to \eqref{transmatrx} and the variation of constant formula,
the solution for \eqref{splitode} is given by
\begin{eqnarray}
\label{avgperturb}
&& \hspace{-0.7cm}\widebar{U}_{\star}(x,t)=\int_{t_0}^{t_0+t}N(x,t,s)C_{\star}(x)ds
=\sum_{i=1}^3N_i(x)C_{\star}(x)\int_{t_0}^{t_0+t}e^{\s_i(x)(t-s)}ds \nonumber \\
&& \hspace{0.55cm}=\frac{\tilde{\r}_{\star}}{2}(e^{h(\tilde{u}_{\star}+v)t}-1)\left[\begin{array}{c}
1 \\ \tilde{u}_{\star}+v \\ \D_1
\end{array}\right]+\frac{\tilde{\r}_{\star}}{2}(e^{h(\tilde{u}_{\star}-v)}-1)\left[\begin{array}{c}
1 \\ \tilde{u}_{\star}-v \\ \D_2
\end{array}\right] \nonumber \\
&& \hspace{2cm}+\frac{\tilde{\r}_{\star}}{2\g\tilde{u}_{\star}}(e^{\g h\tilde{u}_*t}-1)\left[\begin{array}{c}
0 \\ 0 \\ -(\tilde{u}_{\star}+v)\D_1-(\tilde{u}_{\star}-v)\D_2+2\tilde{u}_{\star}(\widetilde{H}_{\star}+v^2)-\frac{xq}{\tilde{\r}_{\star}}
\end{array}\right] \nonumber \\
\end{eqnarray}
where $\widetilde{H}_\star=H(\widetilde{U}_\star)$ and $t\in[0,\D t]$. Finally, replacing $\widetilde{U}_{\star}$ in \eqref{avgperturb} by $\widetilde{U}$,
the perturbation $\widebar{U}$ is obtained and is given in the form
\begin{equation}
\label{perturbsol}
\widebar{U}(x,t)=(S(x,t,\widetilde{U})-I_3)\widetilde{U},
\end{equation}
where
\begin{equation*}
S(x,t,\widetilde{U})=\left[\begin{array}{ccc}
e^{h\tilde{u}t}\cosh(hvt) & 0 & 0 \\
\vspace{-0.4cm} \\
ve^{h\tilde{u}t}\sinh(hvt) & e^{h\tilde{u}t}\cosh(hvt) & 0 \\
S_{31} & S_{32} & S_{33}
\end{array}\right],
\end{equation*}
with
\begin{eqnarray*}
&&S_{31}=\frac{-xq}{2\g\tilde{m}}(e^{\g h\tilde{u}t}-1)-\frac{v^3}{v^2-(\g-1)^2\tilde{u}^2}
(ve^{\g h\tilde{u}t}-ve^{h\tilde{u}t}\cosh(hvt)-(\g-1)\tilde{u}e^{h\tilde{u}t}\sinh(hvt)), \\
&&S_{32}=\frac{(\g-1)\tilde{u}(v^2+(\g-1)\tilde{u}^2)}{2(v^2-(\g-1)^2\tilde{u}^2)}
(e^{\g h\tilde{u}t}-e^{h\tilde{u}t}\cosh(hvt))
+\frac{v(2v^2-(3\g-2)(\g-1)\tilde{u}^2)}{2(v^2-(\g-1)^2\tilde{u}^2)}e^{h\tilde{u}t}\sinh(hvt), \\
&&S_{33}=-\frac{(\g-1)(v^2+(\g-1)\tilde{u}^2)}{v^2-(\g-1)^2\tilde{u}^2}e^{\g h\tilde{u}t}
+\frac{\g v}{v^2-(\g-1)^2\tilde{u}^2}(ve^{h\tilde{u}t}\cosh(hvt)+(\g-1)\tilde{u}e^{h\tilde{u}t}\sinh(hvt)).
\end{eqnarray*}

\section{Estimation of the residual \boldmath$\widehat{R}(U_{\D x},D_{k,n},\p)$}
\setcounter{equation}{0}

Let $(\eta,\om)$ be an entropy pair of \eqref{HEP1}. Here, we describe the estimation of the residual $\widehat{R}(U_{\D x},D_{k,n},\p)$
\begin{align*}
&\widehat{R}(U_{\Delta{x}},D_{k,n}, \phi)
:=\iint_{D_{k,n}}\left\{\eta({U}_{\D x})\phi_t+\om({U}_{\D x})\phi_x+d\eta({U}_{\D x})\cdot h(x)g(x,U_{\D x})\phi\right\}dxdt,
\end{align*}
where $U_{\D x},\ D_{k,n}$, and $\p$ are defined as in Theorem 3.5.

According to the Taylor expansion of $\eta({U}_{\Delta{x}})$ and \eqref{2.13.1}, for any positive test function $\phi\in C^1_0(\Pi)$,
\begin{align}
\label{b.3}
&\widehat{R}(U_{\Delta{x}}, D_{k,n}, \phi)\nonumber\\
&=\iint_{D_{k,n}}\left\{\eta(\widetilde{U}_{\D x})\phi_t+\om(\widetilde{U}_{\D x})\phi_x\right\}dxdt\nonumber\\
&\quad+\iint_{D_{k,n}}\big\{\big(\eta(U_{\D x})-\eta(\widetilde{U}_{\D x})\big)\phi_t+\big(\om(U_{\D x})-\om(\widetilde{U}_{\D x})\big)\phi_x
  +d\eta(U_{\D x})\cdot h(x)g(x,U_{\D x})\phi\big\} dxdt\nonumber\\
&=\iint_{D_{k,n}}\{\eta(\widetilde{U}_{\Delta{x}})\phi_t+\om(\widetilde{U}_{\Delta{x}})\phi_x\}dxdt\nonumber\\
&\quad+\iint_{D_{k,n}}\{\big(d\eta(\widetilde{U}_{\D x})\widebar{U}_{\D x}\big)\phi_t+\big(\om(U_{\D x})-\om(\widetilde{U}_{\D x})\big)\phi_x
  +d\eta(\widetilde{U}_{\D x})\cdot h(x)g(x,U_{\D x})\phi\} dxdt  \nonumber\\
  &\quad+O(1)(\Delta t)^3 \nonumber\\
&\equiv\widehat{Q}_{k,n}^1+\widehat{Q}_{k,n}^2+O(1)(\Delta t)^3.
\end{align}
In \cite{S2}, if $(\eta,\om)$ is an entropy pair, then
\begin{align}
\label{b.4}
\frac{\partial \eta(\widetilde{U}_{\D x}(x,t))}{\partial t}+\frac{\partial \om(\widetilde{U}_{\D x}(x,t))}{\partial x}\leq 0
\end{align} in the sense of distribution on the rectangle $D_{k,n}$.
Multiplying the left hand side of \eqref{b.4} by a positive test function $\phi$, and integrating by parts
over $D_{k,n}$, we obtain
\begin{align}
\label{b.5}
\widehat{Q}_{k,n}^1
&\geq\int^{x_{k+1}}_{x_{k-1}}\eta(\widetilde{U}_{\D x}(x, t))\p\Big|_{t=t_n^+}^{t=t_{n+1}^-}dx
 +\int^{t_{n+1}}_{t_n}\om(\widetilde{U}_{\D x}(x, t))\p\Big|_{x=x_{k-1}}^{x=x_{k+1}}dt.
\end{align}
The estimate of $\widehat{Q}_{k,n}^2$ is similar to that of $Q_2$ in Theorem 2.2. Let $(x_k,t_n)=(x_k,0)$, $D=D_{x_k,0}$, and let $\widetilde{U}_{\Delta x}$ in $D$ consist of the 1-shock with speed $s_1$, 2-contact discontinuity with speed $s_2$, and 3-rarefaction wave with lower speed $s_3^-$ and upper speed $s^+_3$. Then,
\begin{align}
\label{b.6}
\widehat{Q}_{k,n}^2
&=\left(\int^{\D t}_0\!\!\!\int^{x_k+s_1t}_{x_k-\D x}+\int_0^{\D t}\!\!\!\int_{x_k+s_1t}^{x_k+s_2t}+\int^{\D t}_0\!\!\!\int^{x_k+s^-_3t}_{x_k+s_2t}
 +\int^{\D t}_0\!\!\!\int_{x_k+s_3^-t}^{x_k+s_3^+t}+\int^{\D t}_0\!\!\!\int_{x_k+s_3^+t}^{x_k+\D x}\right)\nonumber\\
 &\qquad\Big\{\big(d\eta(\widetilde{U}_{\Delta{x}})\widebar{U}_{\Delta{x}}\big)\phi_t
 +\big(\om(U_{\Delta{x}})-\om(\widetilde{U}_{\Delta{x}})\big)\phi_x+d\eta(\widetilde{U}_{\D x})\cdot h(x)g(x,U_{\D x})\phi\Big\} dxdt\nonumber\\
 &\equiv \widehat{Q}^{21}+\widehat{Q}^{22}+\widehat{Q}^{23}+\widehat{Q}^{24}+\widehat{Q}^{25}.
\end{align} According to \eqref{diff1} and integration by parts, we have
\begin{align}
\label{b.7}
\widehat{Q}^{21}&=\int^{x_k+s_1\D t}_{x_k-\D x}\big(d\eta(\widetilde{U}_{\D x})\widebar{U}_{\D x}\p\big)(x,\Delta{t})dx-
\int^{x_k}_{x_k-\D x}\big(d\eta(\widetilde{U}_{\D x})\widebar{U}_{\D x}\p\big)(x,0)dx\nonumber\\
&\quad-\int^{\D t}_{0}s_1\big(d\eta(\widetilde{U}_{\D x})\widebar{U}_{\D x}\p\big)(s_1t-,t)dt
 +\int^{\D t}_{0}\big((\om(U_{\D x})-\om(\widetilde{U}_{\D x}))\p\big)(x,t)\Big|^{x=s_1t-}_{x=-\D x}dt\nonumber\\
&\quad+O(1)(\Delta t)^3(\Delta x)\|\phi\|_\infty, \\
\label{b.8}
\widehat{Q}^{22}&=\int^{x_k+s_2\Delta{t}}_{x_k+s_1\Delta{t}}\big(d\eta(\widetilde{U}_{\D x})\widebar{U}_{\D x}\p\big)(x,\Delta{t})dx
 +\int^{\D t}_{0}\big((\om(U_{\D x})-\om(\widetilde{U}_{\D x})-s_2d\eta(\widetilde{U}_{\D x})\widebar{U}_{\D x})\p\big)(s_2t-, t)dt\nonumber\\
&\quad-\int^{\D t}_{0}\big((\om(U_{\D x})-\om(\widetilde{U}_{\D x})-s_1d\eta(\widetilde{U}_{\D x})\widebar{U}_{\D x})\p\big)(s_1t+, t)dt
 +O(1)(\Delta t)^3(\Delta x)\|\phi\|_\infty, \\
\label{b.8.5}
\widehat{Q}^{23}&=\int^{x_k+s_3^-\D t}_{x_k+s_2\D t}\big(d\eta(\widetilde{U}_{\D x})\widebar{U}_{\D x}\p\big)(x,\Delta{t})dx
 +\int^{\D t}_{0}\big((\om(U_{\D x})-\om(\widetilde{U}_{\D x})-s_3^-d\eta(\widetilde{U}_{\D x})\widebar{U}_{\D x})\p\big)(s_3^-t, t)dt\nonumber\\
&\quad-\int^{\D t}_{0}\big((\om(U_{\D x})-\om(\widetilde{U}_{\D x})-s_2d\eta(\widetilde{U}_{\D x})\widebar{U}_{\D x})\p\big)(s_2t+, t)dt
 +O(1)(\Delta t)^3(\Delta x)\|\phi\|_\infty,
\end{align}
and
\begin{align}
\label{b.9}
\widehat{Q}^{25}&=\int^{x_k+\D x}_{x_k+s_3^+\D t}\big(d\eta(\widetilde{U}_{\D x})\widebar{U}_{\D x}\p\big)(x,\Delta{t})dx-
\int_{x_k}^{x_k+\D x}\big(d\eta(\widetilde{U}_{\D x})\widebar{U}_{\D x}\p\big)(x,0)dx\nonumber\\
&\quad+\int^{\D t}_{0}s_3^+\big(d\eta(\widetilde{U}_{\D x})\widebar{U}_{\D x}\p\big)(s_3^+t,t)dt
 +\int^{\D t}_{0}\big((\om(U_{\D x})-\om(\widetilde{U}_{\D x}))\p\big)(x, t)\Big|^{x=\D x}_{x=s_3^+t}dt\nonumber\\
&\quad+O(1)(\Delta t)^3(\Delta x)\|\phi\|_\infty.
\end{align}
Next, we estimate $\widehat{Q}^{24}$. According to the same argument as that for \eqref{diff2}-\eqref{Q242} and $d\om=d\eta df$, we obtain
\begin{align}
\label{b.10}
\widehat{Q}^{24}&=\int^{x_k+s_3^+\D t}_{x_k+s_3^-\D t}\big(d\eta(\widetilde{U}_{\D x})\widebar{U}_{\D x}\p\big)(x,\Delta{t})dx
 +\int^{\D t}_0\big((\om(U_{\D x})-\om(\widetilde{U}_{\D x})-s_3^+d\eta(\widetilde{U}_{\D x})\widebar{U}_{\D x})\p\big)(s_3^+t, t)dt\nonumber\\
&\quad-\int^{\D t}_0\big((\om(U_{\D x})-\om(\widetilde{U}_{\D x})-s_3^-d\eta(\widetilde{U}_{\D x})\widebar{U}_{\D x})\p\big)(s_3^-t, t)dt\nonumber\\
&\quad+O(1)\left((\D t)^3(\D x)+(\D t)^2(\underset{D_{k,n}}{\osc}\{\widetilde{U}\})\right)\|\phi\|_\infty.
\end{align}
By \eqref{b.6}-\eqref{b.10},
\begin{align}
\label{b.11}
\widehat{Q}^2_{k,n}&=\sum_{i=1}^5\widehat{Q}^{2i}
  =\int^{\D x}_{-\D x}\big(d\eta(\widetilde{U}_{\D x})\widebar{U}_{\D x}\p\big)(x,t)\Big|_{t=0}^{t=\D t}dx
  +\int^{\D t}_0\big((\om(U_{\D x})-\om(\widetilde{U}_{\D x}))\p\big)(x, t)\Big|_{x=-\D x}^{x=\D x}dt\nonumber\\
&\quad+\int^{\D t}_0\big((\om(U_{\D x})-\om(\widetilde{U}_{\D x})
  -s_1d\eta(\widetilde{U}_{\D x})\widebar{U}_{\D x})\p\big)(x, t)\Big|^{x=s_1t-}_{x=s_1t+}dt\nonumber\\
&\quad+\int^{\D t}_0\big((\om(U_{\D x})-\om(\widetilde{U}_{\D x})
  -s_2d\eta(\widetilde{U}_{\D x})\widebar{U}_{\D x})\p\big)(x, t)\Big|^{x=s_2t-}_{x=s_2t+}dt
\nonumber\\
&\quad+O(1)\left((\Delta t)^3(\Delta x)+(\Delta{t})^3+(\Delta{t})^2(\underset{D}{\osc}\{\widetilde{U}\})\right)\|\phi\|_\infty.
\end{align}

We estimate the second and third terms on the right-hand side of \eqref{b.11}. Suppose that the state $\widetilde{U}_1$ is connected to the state $\widetilde{U}_L=U_L=(\r_L,m_L,E_L)$ by 1-shock on the right, and to the state $\widetilde{U}_2$ by 2-contact discontinuity on the left. By the result of \cite{S2}, \eqref{RHcond} is replaced by
\begin{equation}
\label{b.12}
\om(\widetilde{U}_1)-\om(\widetilde{U}_L)\ge s_1(\widetilde{U}_1-\widetilde{U}_L),\quad
\om(\widetilde{U}_1)-\om(\widetilde{U}_2)\ge s_2(\widetilde{U}_1-\widetilde{U}_2).
\end{equation}
Therefore, according to \eqref{b.12} and similar argument as \eqref{solverestmate}, we obtain
\begin{align}
\label{b.13}
&\int_0^{\D t}\big((\om(U_{\D x})-\om(\widetilde{U}_{\D x})-s_1d\eta(\widetilde{U}_{\D x})\widebar{U}_{\D x})\p\big)(x,t)\Big|_{x=s_1t+}^{x=s_1t-}dt
 \nonumber\\
&\hspace{3cm}\ge\Big(O(1)(\D t)^2(\underset{D}{\osc}\{\widetilde{U}\})+O(1)\D t(\underset{D}{\osc}\{\widetilde{U}\})^2\Big)\|\p\|_{\infty}, \\
\label{b.14}
&\int_0^{\D t}\big((\om(U_{\D x})-\om(\widetilde{U}_{\D x})-s_2d\eta(\widetilde{U}_{\D x})\widebar{U}_{\D x})\p\big)(x,t)\Big|_{x=s_2t+}^{x=s_2t-}dt
 \nonumber\\
&\hspace{3cm}\ge\Big(O(1)(\D t)^2(\underset{D}{\osc}\{\widetilde{U}\})+O(1)(\D t)(\underset{D}{\osc}\{\widetilde{U}\})^2\Big)\|\p\|_{\infty}.
\end{align}
According to \eqref{b.3},\ \eqref{b.5},\ \eqref{b.11},\ \eqref{b.13}, and \eqref{b.14}, we obtain the result of \eqref{RPres1}:
\begin{align*}
&\widehat{R}(U_{\D x},D_{k,n},\phi)\ge\int^{x_{k+1}}_{x_{k-1}}\big(d\eta(\widetilde{U}_{\D x})\widebar{U}_{\D x}\p\big)(x,t^-_{n+1})dx
-\int^{x_{k+1}}_{x_{k-1}}\big(\eta(\widetilde{U}_{\D x})\p\big)(x,t)\Big|^{t=t^-_{n+1}}_{t=t^+_n}dx\nonumber\\
&\quad+\int^{t_{n+1}}_{t_n}\big(\om(U_{\D x})\p\big)(x,t)\Big|^{x=x_{k+1}}_{x=x_{k-1}}dt
+O(1)\left((\Delta t)^2(\Delta x)+(\Delta{t})^3+(\Delta{t})^2\underset{D_{k,n}}{\osc}\{\widetilde{U}\}\right)\|\phi\|_\infty.
\end{align*}

\vskip 1cm




\end{document}